%A9\varlimsup\varliminfG_{\o,\l}^{\e}(P4)(P4)()G^{\l}G^{(\l)}
\documentclass[12pt,a4paper,reqno]{amsart}
\usepackage{amsmath,amssymb,graphics,epsfig,color,enumerate,mathrsfs, url}
%%%%%%%%%%%%%%%%%%%%%%%%%%%%%%%%%%%%%%%%%%%%%%%%%%%
\usepackage{dsfont}  \usepackage{verbatim}\textwidth= 14. cm
\definecolor{refkey}{gray}{.75}
\definecolor{labelkey}{gray}{.5}

\newtheorem{Theorem}{Theorem}[section]

\newtheorem{Lemma}[Theorem]{Lemma}
\newtheorem{Proposition}[Theorem]{Proposition}

\newtheorem{Remark}[Theorem]{Remark}

\newtheorem{Claim}[Theorem]{Claim}
\newtheorem{Definition}[Theorem]{Definition}

 \definecolor{darkgreen}{rgb}{0,0.6,0}

\definecolor{light}{gray}{0.9}

\newcommand{\verde}{\textcolor{black}}  %prima darkgreen
\newcommand{\rosso}{\textcolor{black}}% prima red
%\newcommand{\da}{\downarrow}
%\newcommand{\rrr}{\textcolor{black}}
%\newcommand{\blu}{\textcolor{blue}}
%\newcommand{\vvv}{\textcolor{darkgreen}}
%\newcommand{\vvv}{\textcolor{black}}
%\newcommand{\ovo}{\textcolor{red}}

 %Proposition
%%%%%%%%%%%%%%%%%%%%%%%%%%%%%%%%%%%%%%%%%%%%%%%%%%%%%%%%%%%%%%%%%%%%%%%%%%%%%%
%%%%%%%%%% Calligraphic letters
%%%%%%%%%%%%%%%%%%%%%%%%%%%%%%%%%%%%%%%%%%%%%%%%%%%%%%%%%%%%%%%%%%%%%%%%%%%%%%

\newcommand{\cA}{\ensuremath{\mathcal A}}
\newcommand{\cB}{\ensuremath{\mathcal B}}
\newcommand{\cC}{\ensuremath{\mathcal C}}
\newcommand{\cD}{\ensuremath{\mathcal D}}
\newcommand{\cE}{\ensuremath{\mathcal E}}
\newcommand{\cF}{\ensuremath{\mathcal F}}
\newcommand{\cG}{\ensuremath{\mathcal G}}

\newcommand{\cK}{\ensuremath{\mathcal K}}
\newcommand{\cL}{\ensuremath{\mathcal L}}
\newcommand{\cM}{\ensuremath{\mathcal M}}
\newcommand{\cN}{\ensuremath{\mathcal N}}

\newcommand{\cP}{\ensuremath{\mathcal P}}
\newcommand{\cQ}{\ensuremath{\mathcal Q}}
\newcommand{\cR}{\ensuremath{\mathcal R}}

\newcommand{\cV}{\ensuremath{\mathcal V}}
\newcommand{\cW}{\ensuremath{\mathcal W}}

%%%%%%%%%%%%%%%%%%%%%%%%%%%%%%%%%%%%%%%%%%%%%%%%%%%%%%%%%%%%%%%%%%%%%%%%%%%%%%
%%%%%%%%%%%% Blackboard bolds
%%%%%%%%%%%%%%%%%%%%%%%%%%%%%%%%%%%%%%%%%%%%%%%%%%%%%%%%%%%%%%%%%%%%%%%%%%%%%%

\newcommand{\bbE}{{\ensuremath{\mathbb E}} }

\newcommand{\bbG}{{\ensuremath{\mathbb G}} }

\newcommand{\bbI}{{\ensuremath{\mathbb I}} }

\newcommand{\bbK}{{\ensuremath{\mathbb K}} }
\newcommand{\bbL}{{\ensuremath{\mathbb L}} }

\newcommand{\bbN}{{\ensuremath{\mathbb N}} }

\newcommand{\bbP}{{\ensuremath{\mathbb P}} }
\newcommand{\bbQ}{{\ensuremath{\mathbb Q}} }
\newcommand{\bbR}{{\ensuremath{\mathbb R}} }

\newcommand{\bbZ}{{\ensuremath{\mathbb Z}} }
%\newcommand{\mathds}{1}  %%%%%%%%%%%%%%%%%%%%%%%%%%%%%%%%%%%%%%%%%%%%%%%%%%%%% x ale: da togliere

%INIZIO COMANDI AGGIUNTI DA ALFIE

%FINE COMANDI AGGIUNTI DA ALFIE

%\newcommand{\letters}{\renewcommand{\theequation}{\Alph{equation}}}
%\newcommand{\numbers}{\renewcommand{\theequation}{\arabic{equation}}}

%
%%%%%%%%%%%%%%%%%%%%%%%%% GRECO
%
\let\a=\alpha \let\b=\beta   \let\d=\delta  \let\e=\varepsilon
 \let\g=\gamma       \let\l=\lambda
      \let\o=\omega      
  \let\s=\sigma \let\t=\tau   
  
\let\D=\Delta   \let\G=\Gamma  \let\L=\Lambda 
\let\O=\Omega      
\newcommand{\da}{\downarrow}

%\newcommand{\blu}{\textcolor{blue}}

%\newtheorem{viva}{{\textcolor{red}{FATTO}}}

%%%%%%%%%%%%%%%%%%%%%%%%%%%%%%%%%%

%\newcommand{\rclub}{\textcolor{red}{$\clubsuit$}}

%\newcommand{\CCC}{\rosso{\loz check}}

\newcommand{\be}{\begin{equation}}
\newcommand{\en}{\end{equation}}
\newcommand{\bem}{\begin{multline}}
\newcommand{\enm}{\end{multline}}
\newcommand{\bes}{\begin{equation*}}
\newcommand{\ens}{\end{equation*}}

\author[A.~Faggionato]{Alessandra Faggionato}
\address{Alessandra Faggionato.
 \rosso{Department of Mathematics, University La Sapienza, 
  P.le Aldo Moro 2, 00185 Rome, Italy}}
\email{faggiona@mat.uniroma1.it}

\thanks{This work has been partially supported by the ERC Starting Grant 680275 MALIG}

\newcommand{\ra}{\rangle}
\newcommand{\la}{\langle}

%\thanks{We acknowledge the support by  PRIN 20155PAWZB "Large Scale Random Structures"}

%\title[Mott variable range hopping]{Homogenization, scaling and  critical path analysis in Mott variable range hopping}

%\title[Hydrodynamics of simple exclusion   processes]{Hydrodynamics of simple exclusion processes in  symmetric random environments}

\title[Hydrodynamic limit of simple exclusion  processes]{Hydrodynamic limit of simple exclusion  processes in symmetric random environments via duality and homogenization}
%\title[Hydrodynamic limit of simple exclusion  processes]{Hydrodynamic limit of simple exclusion  processes \rosso{on point processes and random jump rates}  via duality and homogenization}
\begin{document}

\begin{abstract}
We consider continuous-time   random walks on  a random locally finite subset of $\mathbb{R}^d$ 
with random symmetric jump probability rates.  The  jump range can be unbounded.   We assume some second--moment conditions and  that the above randomness  is left invariant by the action of the group $\mathbb{G}=\mathbb{R}^d$ or $\mathbb{G}=\mathbb{Z}^d$.   We then add  a site-exclusion interaction, thus making  the  particle system  a simple exclusion process.
We show that, for almost all environments, under diffusive space-time rescaling  the system 
exhibits a hydrodynamic limit  in path space. 
The hydrodynamic equation is  non-random    and governed by the effective  homogenized  matrix $D$ of the single random walk, which can be degenerate. 
  The above result
  covers a very large family of models
   including  e.g. simple exclusion processes   built from   random conductance models on $\mathbb{Z}^d$  and on crystal lattices (possibly with long conductances), Mott variable range hopping, simple random walks on Delaunay triangulations,   random walks on  supercritical percolation clusters.

\smallskip

\noindent {\em Keywords}:
simple point process,  Palm distribution, random walk in random environment, stochastic homogenization,   hydrodynamic limit.

\smallskip

\noindent{\em AMS 2010 Subject Classification}:  
60G55, % point processe
60K35 %Interacting random processes; statistical mechanics type models; percolation theory
60K37, %processes in random environemtn
35B27.   %homogenization

\end{abstract}

\maketitle
%\date{} \today

\section{Introduction}

The simple exclusion process  is a fundamental interacting particle system obtained by adding  a site-exclusion interaction  to multiple random walks \cite{KL}. We assume here that particles lie on a random locally finite subset of $\bbR^d$ (\verde{a} simple point process) and allow the  jump  probability rates  to  be random as well, but symmetric (i.e. they do not depend on the orientation of the jump). 
 We require  that the law of  the environment  is stationary and ergodic w.r.t.   the action of  a group $\bbG$ of $\bbR^d$-translations, $\bbG$  being the full group of translations or a subgroup   isomorphic to $\bbZ^d$. 
   Under weak second moment assumptions on the jump rates and a percolation assumption 
 assuring the existence of the process, 
    we then prove for almost all environments  that the 
 simple exclusion process admits a hydrodynamic limit (HL)  in path space with hydrodynamic equation $\partial_t \rho = \nabla\cdot ( D \nabla \rho)$, $D$ being the non random effective homogenized matrix associated to a single random walk ($D$ can also be  degenerate). 
The above result (stated in Theorem \ref{teo1} in Section \ref{HL_EP}) covers a very large class of 
simple exclusion processes in symmetric random environments, e.g.  those    obtained by adding a site-exclusion interaction to random walks on   $\mathbb{Z}^d$  and on general crystal  lattices with random (possibly arbitrarily long) conductances, to random walks performing a Mott variable range hopping,  to simple random walks on Delaunay triangulations \verde{\cite{FT}} or  on  supercritical percolation clusters
(in Section \ref{sec_esa} we discuss  some examples).
 In Section \ref{cima} we provide a brief  presentation  of 
our class of models and our main result,  without insisting on  technicalities (faced  in the subsequent sections). We  discuss below how  the present work relates with the existing literature, the strategy we have followed and the most original aspects of our contribution.

Given a realization of the environment the resulting  simple exclusion process is non--gradient.
%, in particular the current from one site $x$ to another  site $y$ cannot be expressed as   difference between  some  function $h$ centered  around $x$ and $h$  centered around $y$. 
 The usual  derivation of the HL for  non-gradient interacting particle systems   based on the method  introduced  by Varadhan and further developed by Quastel  (cf.\ \cite{KL,Q1,V})   is  very  technical. It becomes even harder in the disordered case (cf.\ \cite{FMar,Q2}).  On the other hand, for  disordered simple exclusion processes  with symmetric jump rates one can try to avoid the non-gradient machinery by exploiting  
  some \emph{duality property} between the particle system and the single random walk
   and some \emph{averaging property}   of the single random walk. 
    This was first realized  by K.~Nagy in \cite{N} for the simple exclusion process  on $\bbZ$ with symmetric random jump rates. Nagy's analysis had two main ingredients:  a representation of the exclusion process     in terms of compensated Poisson processes and the Markov semigroup of the random walk
    (see \cite[Eq.~(12), (13)]{N} and a quenched CLT for the random walk uniformly in the starting point (see \cite[Theorem~1]{N}). Nagy's representation (coming from duality) has been further \rosso{generalized} in \cite{F0,F1} and in \cite{F1} we showed that Nagy's second ingredient can be replaced but a suitable homogenization result of the $L^2$-Markov semigroup of the random walk. The advantage comes from the fact that  homogenization  requires much weaker assumptions than quenched CLT's (moreover, it is also more natural from a physical viewpoint: the light bulb turns on because of the motion of many electrons and not  of a single one).
  One advantage of the approach based on Nagy's representation and homogenization is that one can prove the HL without proving the uniqueness of the weak solution of the Cauchy problem associated to the hydrodynamic limit. On the other hand, one gets the HL at a fixed macroscopic time (in the form usually stated e.g. in \cite{KL}) but not in path space.
  
  To gain the HL in path space, one has to prove the tightness of the empirical measure. This has been  
  achieved in \cite{GJ}  by developing the method of corrected empirical measure (initially introduced in \cite{JL}). This method again relies on duality and on homogenization property of the resolvent of the random walk.
  Once proved the tightness one can proceed in two ways.  If a uniqueness result for the Cauchy problem is available, one can try to push further the analysis of the  corrected empirical measure and characterize all limit points of the empirical measures as in \cite{GJ}. Otherwise,   one can try to extend  Nagy's representation 
 and use homogenization (or some averaging, in general) to get  the HL for a fixed time, avoiding results of uniqueness. This has revealed  useful e.g.  for the subdiffusive system considered  in \cite{FJL}, where   a quenched CLT for  varying and converging initial 
 points was used instead of  homogenization. 
 
 Of course, the above strategies have been developed in specific contexts and not in full generality. The applications to other models require some work, already in the choice of the right \rosso{function} spaces and  topologies. 
 In our proof we used the corrected empirical measure and homogenization to prove tightness. To proceed  we  have presented the   two independent routes: by  proving   uniqueness for the Cauchy problem in weak form  we  characterize the limit points of the empirical measure continuing to work with the corrected one; alternatively we prove  in Appendix \ref{sec_passetto}  Nagy's representation in our context and use homogenization  to get  the HL  at a fixed time.

We comment now how our result differs from the previous contributions concerning the diffusive HL of simple exclusion processes in symmetric environments. The main novelty   is the huge 
class of  models for  which the HL has been proved. In particular, (i)   we go beyond the lattice ($\bbZ^d$ or toroidal) structure and deal with a very broad range of random environments including geometrically amorphous ones (think e.g. to a simple exclusion process on a Poisson point process), (ii) our assumptions on the jump rates are minimal and given by 2nd moment assumptions plus a percolation assumption for Harris' percolation argument, (iii) we remove ellipticity conditions on the jump rates and  treat also the case of degenerate effective homogenized matrix $D$, (iv) the jump range can be unbounded.
Concerning Item (i) we point out  that  
 to gain such a generality  we have used  the theory of $\bbG$--stationary  random measures, where $\bbG=\bbR^d$, $\bbZ^d$ (cf. \cite{Ge,GL,Km}), in order to fix our general  setting in Section \ref{flauto}. 
 This also allows   to describe the \verde{ergodic} properties of the environment  in terms  of  the Palm distribution.   To achieve the HL in great generality we needed the same generality for the  homogenization results. This part, which has also an independent interest, has been presented in  the companion work \cite{Fhom3}, where our homogenization analysis is based on 2-scale convergence. Although \cite{Fhom3} has been preliminary to the present work,  here we have kept the presentation self-contained.
 
 For completeness, we point out that  Theorem \ref{teo1}
 includes also as very special cases  the    HL in  \cite{F1}, \cite{N} and  \cite{RSS}  (for the part concerning non-dynamical random environments in \cite{RSS}). 
 We recall that in \cite{RSS} the authors prove the HL for the random conductance model on $\bbZ^d$ with possibly time-dependent random conductances in a given interval $[a,b]$, with $0<a<b<+\infty$. \rosso{
 Finally, we point out that for reversible but not symmetric 
jump rates  the homogenization results  in \cite{Fhom3} for a single random walk still hold, but the duality properties of the simple exclusion process fail. An explicit example is given by   the simple exclusion process with site disorder treated in \cite{FMar,Q2}. In general, for  reversible but not symmetric 
jump rates, the hydrodynamic limit is expected to be described by the non-linear equation  $\partial _t \rho= \nabla\cdot (D(\rho) \nabla \rho)$ with a  density-dependent diffusion matrix $D(\rho)$. As rigorously proved in \cite[Theorem~1]{Q2} in the case of site-disorder, $D(0)$ is expected to coincide with the effective homogenized matrix  $D$ associated to a single random walk}.
  %Their strategy is based on an extension of Nagy's representation, on a quenched invariance principle  with varying starting  points and a tightness criterion not based on   the corrected empirical measure.
%
%
%\club  First we consider the simple exclusion process on $\bbZ^d$ with  random conductances as basic example with acting group $\bbG=\bbZ^d$, then (always dealing with $\bbG=\bbZ^d$) we move to a generic lattice graph.Finally we consider    the  simple exclusion process on a marked simple point process with Mott variable range hopping   as example of 
%amorphous environment with acting group $\bbG=\bbR^d$. Another natural example in the amorphous case is the simple  exclusion process on  Delaunay triangulations, which will be discussed in a separate work. 

%Finally, we point out that this work takes origin from  our previous  unpublished notes \cite{Fhom1}. 

  \smallskip

{\bf Outline of the paper}.  In Section \ref{cima} we give a  non-technical presentation of  setting and  results. In Section \ref{flauto} we present more precisely our setting and   basic assumptions for the single random walk.  In Section \ref{HL_EP} we state our HL (see Theorem \ref{teo1}).
 In Section \ref{sec_esa} we discuss some examples. In Section \ref{figlio_stress} we recall the homogenization results from \cite{Fhom3} used in the proof of Theorem \ref{teo1}. In Section \ref{sec_GC} we present the graphical construction of the simple exclusion process and analyze its Markov semigroup. In Section \ref{dualino} we collect some results concerning duality.
   In Section \ref{sec_mammina} we recall some properties of the space $\cM$ of Radon measures on $\bbR^d$ and of the \rosso{Skorohod} space $D([0,T],\cM)$ and show the uniqueness of the weak solution of the Cauchy problem. In Section \ref{tipicone} we study the family of typical environments, for which 
   the HL will be proved. 
     In Section \ref{ida} we prove Theorem \ref{teo1}.  \rosso{In Appendix \ref{app_santi} we present a model satisfying  all our assumptions for which the effective homogenized matrix $D$ is nonzero but degenerate.} Appendix \ref{app_localino} concerns the proof of Proposition \ref{prop_SEP}.
     In Appendix \ref{sec_passetto} we give an independent proof of the HL for fixed times by proving Nagy's representation in our context and by using homogenization.

%%%%%%%%%%%%%%%%%%%%%%%%%%%%%%
%%%%%%%%%%%%%%%%%%%%%%%%%%%%%%
%%%%%%%%%%%%%%%%%%%%%%%%%%%%%%
%%%%%%%%%%%%%%%%%%%%%%%%%%%%%%

\section{Overview}\label{cima}
In this section we  give a brief presentation of our context and  results  postponing   a detailed discussion to Sections \ref{flauto} and \ref{HL_EP}.
Not surprisingly, this  story starts with a probability space   $(\O, \cF, \cP)$. Here \rosso{are} the other characters: the group $\bbG$ acting on the probability space  and acting by translations on $\bbR^d$, a simple point process and a family of jump probability rates.
%We try to give an overall description of the above characters, giving later precise definitions and examples. 

The group $\bbG$ can be $\bbR^d$ or $\bbZ^d$ (the former endowed with the Euclidean distance, the latter  with the discrete topology). $\bbG$ is a measurable  space endowed with the Borel $\s$--algebra and it  acts   on $(\O, \cF, \cP)$ 
 by 
 a family of  maps  $(\theta_g)_{g\in \bbG}$,  with $\theta_g :\O\to \O$, such that 
% \begin{itemize}
%\item $ \theta_0=\mathds{1}$, 
%\item  $ \theta _g \circ \theta _{g'}= \theta_{g+g'}$ for all $g,g'\in \bbG$,
%\item  the map $\bbG\times \O \ni (g,\o) \mapsto \theta _g \o \in \O$ is measurable.
%\end{itemize}
\begin{equation}\label{eq_azione}
\begin{cases}
\theta_0=\mathds{1},\\ 
 \theta _g \circ \theta _{g'}= \theta_{g+g'} \text{ for all } g,g'\in \bbG,\\
 \text{the map $\bbG\times \O \ni (g,\o) \mapsto \theta _g \o \in \O$ is measurable.}
\end{cases}
\end{equation}

The group $\bbG$ acts also on the space $\bbR^d$ by translations. We denote its action by  $(\t_g)_{g\in \bbG}$, where $\t_g:\bbR^d\to \bbR^d$ is given by 
\be\label{trasferta}
\t_g x=x+ g_1v_1 + \cdots +g_d v_d\;, \qquad  g=(g_1,\dots, g_d) \in \bbG\,,
\en
 for a fixed basis   $v_1, \dots, v_d$ of $\bbR^d$.
For many applications, $\t_g x= x+g$.  When dealing with processes on general  lattices   (as e.g. the triangular or hexagonal lattice on $\bbR^2$),  the general form   \eqref{trasferta} is more suited (see Section \ref{sec_esa}).

We assume to have a simple point process on $\bbR^d$ defined on our probability space. In particular, to  each $\o \in \O$  we associate a  locally finite subset $\hat \o \subset \bbR^d$ by a measurable map $\O \ni \o \to \hat \o \in  \cN $. Above,  $\cN$ is the measurable  space of locally finite subsets of $\bbR^d$ with $\s$--algebra generated by the sets $\{ |\hat \o \cap A|=n\}$, where  $A\subset \bbR^d$ is Borel and  $n \in \bbN$ (cf. \cite{DV}). As discussed in \cite{DV} one can introduce a metric $d$ on $\cN$ such that the above $\s$--algebra equals the Borel $\s$--algebra.

Finally, we fix a measurable  function 
\be\label{ciccino} c: \O\times \bbR^d \times \bbR^d \ni (\o, x, y)\mapsto c_{x,y}(\o) \in [0,+\infty)\,,\en
symmetric in $x,y$: $c_{x,y}(\o)= c_{y,x}(\o)$.  
As it will be clear below, only the value of $ c_{x,y}(\o)$ with $x\not =y$ in  $ \hat \o$ will be relevant.  Hence, 
without loss of generality, we take  
\be\label{cicciobello}
 c_{x,x}(\o)\equiv 0 \text{ and } c_{x,y}(\o)\equiv 0 \text{ if }\{x,y\}\not \subset \hat \o\,.
\en

All the above objects are related by  $\bbG$-invariance.  As detailed in Section \ref{flauto}, we   assume that $\cP$ is stationary and ergodic for the action  $(\theta_g)_{g\in \bbG}$. We recall that stationarity means that  $\cP \circ\theta_g^{-1}=\cP$ for all $g\in \bbG$, while ergodicity means that $\cP(A)=1$ for all translation invariant sets  $A \in \rosso{\cF}$, i.e. such that 
$\theta_g A= A$ for all $g\in \bbG$ (we can identity $\bbG$ with a subset of Euclidean translations by \eqref{trasferta}, thus motivating our terminology).
% kuka
We also  assume that,  for $\cP$--a.a. $\o\in \O$ and  for all $g\in \bbG$, it holds 
\begin{align}
&\widehat{\theta_g\o}  =\t_{-g} ( \hat{\o}) \,,\label{kiwi1}\\
& c_{x,y} (\theta_g\o)= c_{\t_g x, \t_g y} (\o) \qquad \forall x,y \in \bbR^d \,.
\end{align}
The minus sign in \eqref{kiwi1} could appear ugly, but indeed if one identifies  $\hat \o$ with the counting measure $\mu_\o  (A):= \sharp ( \hat \o \cap A)$, one would restate \eqref{kiwi1} as $\mu_{\theta_g\o} (A)= \mu_\o  ( \t_g A)$ for all $A\subset \bbR^d$ Borel.

Given the environment $\o$,  we will  introduce  by the standard  graphical construction   the simple exclusion process on $\hat\o$ with probability 
rate $c_{x,y}(\o)$ for a jump between $x$ and $y$ when the exclusion constraint is satisfied. As discussed in Section \ref{sec_GC} this simple exclusion process is  a Feller process whose  Markov semigroup    on $C(\{0,1\}^{\hat \o})$ has infinitesimal generator $\cL_\o$ 
  acting on local functions as 
\be\label{mammaE} 
 \cL_{\o } f(\eta) = \sum_{x\in \hat \o} \sum_{y\in \hat \o}c_{x,y}(\o) \eta(x) \bigl( 1- \eta(y)\bigr) \left[ f( \eta ^{x,y})- f(\eta)\right]\,,\;\; \eta \in \{0,1\}^{\hat \o}\,.
 \en
  Above and in what follows, $\{0,1\}^{\hat \o}$ is endowed with the product topology and $C(\{0,1\}^{\hat \o})$ 
 denotes the space of continuous functions on $\{0,1\}^{\hat \o}$ endowed with the uniform topology. 
 We recall that a function $f$ on $\{0,1\}^{\hat \o}$ is called local if $f(\eta)$ depends on $\eta$ only through $\eta(x)$ with $x$ varying among a finite set. 
The configuration   $\eta^{x,y}$ is  obtained from $\eta$ by exchanging the occupation variables at $x$ and $y$, i.e.
\be\label{furia}
\eta^{x,y}(z)=\begin{cases}
\eta(y) & \text{ if } z=x\,,\\
\eta(x) & \text{ if } z=y\,,\\
\eta(z) & \text{ otherwise}\,.
\end{cases}
\en
The  generator $\cL_\o$ given in \eqref{mammaE} can be thought of as
an exchange operator:
\be\label{mahmood}
\cL_\o f (\eta) = \sum _{\{x,y\}\subset \hat \o} c_{x,y}(\o) \bigl[ f(\eta^{x,y})-f(\eta) \bigr]\,.
\en
When the starting configuration is given by a single particle, the dynamics reduces to a random walk in random environment, denoted by 
 $X_t^\o$. In Sections \ref{flauto} and \ref{HL_EP}
we will fix basic assumptions assuring the existence of the above processes for all times for $\cP$--a.a. $\o$.
 
 \smallskip

 We can now present the content of our 
 Theorem \ref{teo1}  (see Section \ref{HL_EP}), in which we show that, under suitable  weak assumptions,  for $\cP$--a.a. environments $\o$  the above  simple exclusion process   admits  a  hydrodynamic limit under diffusive rescaling.   More precisely, for $\cP$-a.a.  $\o$ the following holds. 
Fix an initial macroscopic  profile given by a Borel function $\rho_0  :\bbR^d \to [0,1]$.
Suppose that for any $\e>0$ the simple exclusion process starts with an  initial distribution $\mathfrak{m}_\e$ such that 
\bes
\lim_{\e\da 0} \mathfrak{m}_\e\Big( \Big| \e^d \sum_{x \in \hat \o} \varphi (\e x) \eta(x) -\int _{\bbR^d} \varphi(x) \rho_0(x) dx \Big|>\e\Big)=0 \qquad \forall \varphi \in C_c(\bbR^d)\,.
\ens
Call  $\bbP_{  \o,\mathfrak{m}_\e }^\e$ the law of the  exclusion process on $ \hat \o$ with initial distribution $\mathfrak{m}_\e$ and generator $\e^{-2} \cL_ \o$.  
%Call $m\in(0,+\infty)$ the mean density of the point process $\hat \o$.
 Then  for all $T>0$ one has 
\bes\lim_{\e\da 0} 
\bbP^\e _{  \o,\mathfrak{m}_\e } \Big(\sup_{0\leq t\leq T} \Big|  \e^d \sum_{x \in \hat \o} \varphi (\e x) \eta_t(  x) - \int _{\bbR^d} \varphi(x) \rho(x,t) dx\Big| >\d 
\Big)=0  \; \;\forall \varphi \in C_c(\bbR^d)\,,
\ens
where $\rho:\bbR^d\times[0,\infty)\to \bbR$ is  given by $\rho(x ,t ): =P_t  \rho_0(x) $, $(P_t)_{t\geq 0}$ being the Markov semigroup on bounded measurable functions of the Brownian motion with diffusion matrix $2D$.

Above $D$ is  the so called \emph{effective homogenized matrix}. 
%We recall that, by ergodicity (cf.~Proposition \ref{prop_ergodico}), the mean density $m$ (defined in   \eqref{puffo1}, \eqref{puffo2} and \eqref{zazzera}) satisfies $m=\lim _{\ell \uparrow \infty} \sharp ( \hat \o \cap [-\ell,\ell]^d)/(2\ell)^d $ $\cP$--a.s. 
 \rosso{$D$}  is a $d\times d$ symmetric non-negative matrix, admitting a variational characterization in terms of the Palm distribution $\cP_0$ associated to $\cP$ (cf.~Definition \ref{def_D}). $D$ is related to the homogenization properties of the diffusively rescaled random walk $\e X^\o _{ \e^{-2} t} $ on $\e \hat \o$ as discussed in \cite{Fhom3}.
Some of these properties are collected in Proposition \ref{replay}.
 
 %%%%%%%%%%%%%%%%%%%%%%%%%%%%%%
 
\section{Basic assumptions and homogenization}\label{flauto}
In this section we  describe our setting and our basic assumptions   for the single random walk $X_t^\o$ (hence the site-exclusion interaction does not appear here). The context is the same of \cite{Fhom3} with the simplification that  the jump rates are  symmetric,
hence the counting measure on $\hat \o$ is  reversible for  $X_t^\o$.

We first   fix 
 some basic notation. 
 We denote by $e_1, \dots, e_d$ the canonical basis of $\bbR^d$,  by $\ell (A)$   the Lebesgue measure of the Borel set $A\subset \bbR^d$, by $a \cdot b$  the standard scalar product of $a,b\in \bbR^d$.
  Given a topological space $W$,  without further mention, $W$ will be  thought of as a measurable space endowed with the $\s$--algebra  $\cB(W) $   of its Borel subsets. 
$\cN$ is the space of locally finite subset $\{x_i\}$ of $\bbR^d$.  $\cN$ is  endowed with  a metric such that  the Borel $\s$--algebra $\cB(\cN)$  is generated by the sets $\{ |\hat \o \cap A|=n\}$, where  $A\in \cB( \bbR^d)$  and  $n \in \bbN$ (cf. \cite{DV}).

\smallskip

 Recall that $\bbG$ acts on the probability space $(\O, \cF, \cP)$ by $(\theta_g)_{g\in \bbG}$ (see \eqref{eq_azione}) and that $\cP$ is assumed to be stationary and ergodic for this action. Moreover, $\bbG$ acts
on  $\bbR^d$ by $(\t_g)_{g\in \bbG}$, where  (cf. \eqref{trasferta})
\be\label{trasferta2}
\t_g x= x+Vg\,, \qquad V:= [ v_1|v_2|\cdots|v_d]\,.
\en
 Above, $V$ is the matrix with columns given by  the basis vectors  $v_1,v_2, \dots, v_d$, fixed once and for all.

\smallskip

 We set 
\be\label{simplesso}
\D:= \{t_1 v_1 + \cdots + t_d v_d \,:\, (t_1,\verde{\dots}, t_d) \in [0,1)^d\}\,.
\en
Given $x\in \bbR^d$, the  \emph{$\bbG$--orbit} of $x$ is defined as the  set $\{\t_g x\,:\, g\in \bbG\}$. 

If $\bbG=\bbR^d$, then the \verde{$\bbG$--orbit}  of the origin of $\bbR^d$ equals $\bbR^d$. \rosso{In this case we introduce the function $g: \bbR^d \to \bbG$ as follows:} 
\be\label{attimino}
\rosso{x= \t_g 0=Vg\; \Longrightarrow \; g(x):=g}\,.
\en 
\rosso{Simply, for each $x\in \bbR^d$, $g(x)=V^{-1}x$}.
When $V=\bbI$ (as in many applications), we have $\t_g x=x+g$ and therefore  $g(x)=x$.

If  $\bbG=\bbZ^d$,  $\D$ is a set of  \verde{$\bbG$--orbit} representatives
for the action $(\t_g)_{g\in \bbG}$. We introduce the functions $\b: \bbR^d \to \D$ and $g: \bbR^d \to \bbG$ as follows:
\be\label{attimo}
x= \t_g \bar x  \text{ and } \bar x \in \D \; \Longrightarrow \; \b(x):=\bar x \,, \; g(x) := g\,.
\en
Hence, given $x\in \bbR^d$, $\bar x$ denotes the unique element of $\D$ such that $x$ and $\bar x$ are in the same $\bbG$--orbit, and $g(x)$ denotes the unique element in $\bbG$ such that $x= \t_{g(x)} \bar x$.
\subsection{An example with  $\bbG=\bbZ^d$ and $V\not =\bbI$}\label{virgilio} Although we will discuss several examples in Section  \ref{sec_esa}, our mathematical objects for $\bbG=\bbZ^d$ and $V\not =\bbI$ could appear very abstract  at a first sight.
 To have in mind something concrete to which refer below, we present an example related   to the random walk and the simple exclusion process on the infinite cluster of the supercritical site Bernoulli percolation on the hexagonal lattice (see Section \ref{sec_esa} for a further discussion).
Consider  the hexagonal lattice graph  $\cL=(\cV,\cE)$ in $\bbR^2$, partially drawn in Figure \ref{apetta}. 
  $\cV$ and $\cE$  denote respectively  the vertex set and  the edge set.
%Assuming the  edges  to be of  unitary length,   the fundamental basis associated to the hexagonal lattice is given by $v_1,v_2$, where  $v_1:=\bigl( 2 \cos\frac{\pi}{6},0\bigr)$ and  $v_2:=\bigl(2 \cos\frac{\pi}{6} \cos\frac{\pi}{3} ,2 \cos\frac{\pi}{6} \sin\frac{\pi}{3} \bigr)$. 
 The vectors $v_1$, $v_2$ in Figure \ref{apetta} form a fundamental basis  for  the hexagonal lattice.
 \begin{figure}[!ht]
    \begin{center}
     \centering
  \mbox{\hbox{
  \includegraphics[width=0.3\textwidth]{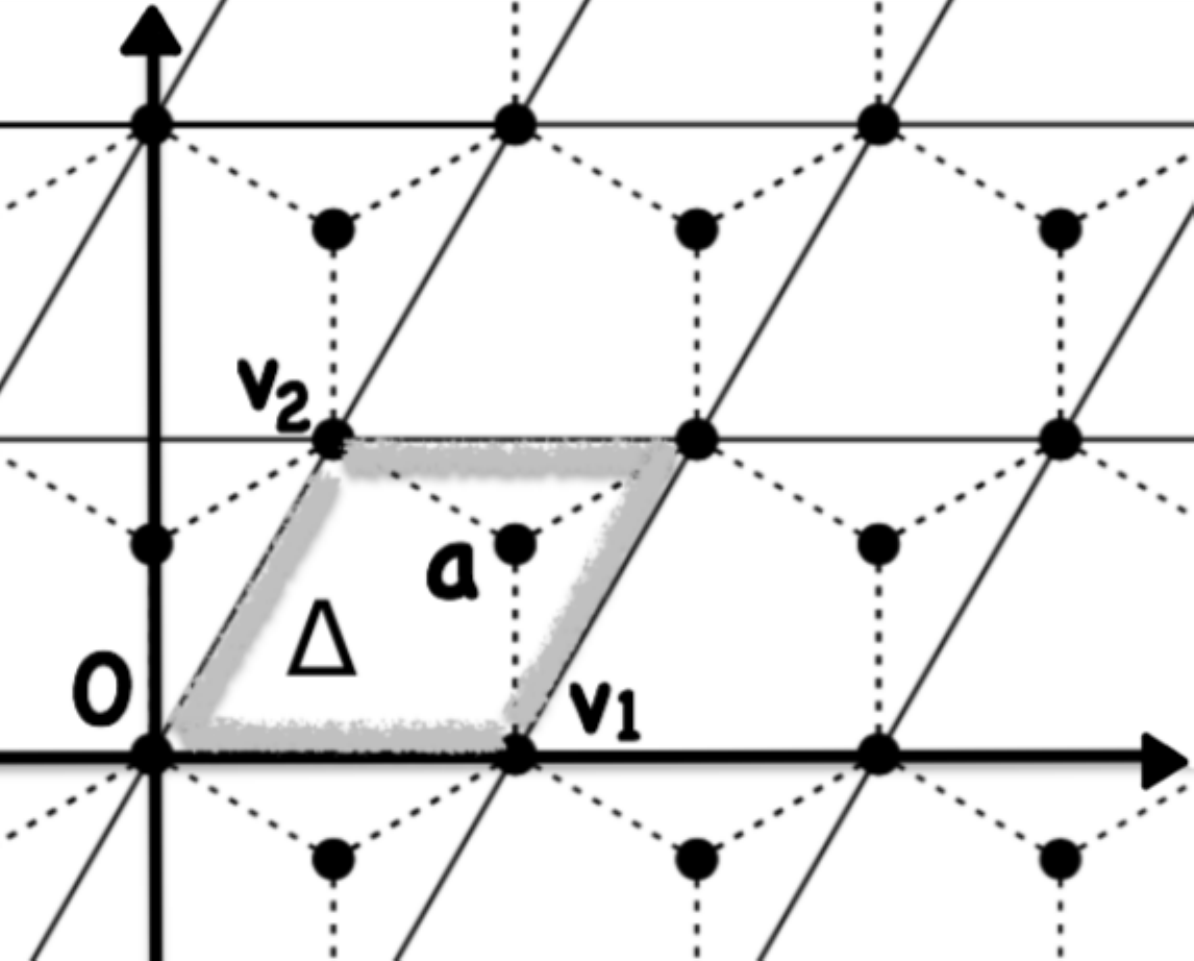}}}
%    % \psfrag{a}{$x$}
%     %\psfrag{b}{$r(x,y)$}
%     %\psfrag{c}{$y$}
         \end{center}
         \caption{\rosso{The parallelogram corresponds to the fundamental cell $\D$}, \rosso{the vectors $v_1,v_2$ are the columns of $V$, $\{0,a\}$ equals $\cV\cap \D$.}}
         %         Hexagonal lattice, fundamental cell $\D$, basis $\{v_1,v_2\}$, set $\cV\cap \}
         \label{apetta}
  \end{figure}
  
We take $\O:=\{0,1\}^\cV$ endowed with the product topology and with   the Bernoulli product probability measure  $\cP$ with supercritical parameter $p$. We set $\bbG:=\bbZ^2$.
The action $(\theta_g )_{g\in \bbZ^2}$ is  given by $\theta _{(g_1,g_2)} \o = ( \o_{x-g_1 v_1 -g_2 v_2} ) _{x\in \cV}$ if $\o=(\o_x) _{x\in \cV}$ (note that $v_1,v_2$ are 2d vectors and not coordinates, while $(g_1,g_2)\in \bbZ^2$). Trivially, $\cP$ is stationary and ergodic for this action. 
The action of $\bbZ^2$ on $\bbR^2$ is given by  the translations  $\t _{(g_1,g_2)}x:= x+g_1 v_1 +g_2 v_2$. Note that $V=[v_1|v_2]$.

The cell $\D$ in \eqref{simplesso} is here the fundamental cell of the lattice $\cL$ given by the parallelogram 
with ticked border in Figure \ref{apetta} (one has to remove the upper and right edges). \rosso{Indeed, $\cV=\cup_{g\in \bbZ^d} \t_g \{0,a\}$ and $\{0,a\}=\cV\cap \D$}. Then the map $\b : \bbR^2 \to \D$ in \eqref{attimo} is the map $\b(x): =\bar x $ where  $\bar x $  is the unique element of $\D$ such that $x =\bar x \text{ mod } \bbZ v_1+\bbZ v_2$. Moreover, the map $g:\bbR^2 \to \bbZ^2$ in \eqref{attimo} assigns to $x $ the only element $g=(g_1,g_2) \in \bbZ^2$ such that $x\in \t_g \D= \D+ g_1 v_1 +g_2 v_2$.

We now describe the simple point process $\hat \o$. As $p$ is supercritical, for $\cP$--a.a. $\o$ the set $\{x\in \cV\,:\, \o_x=1\}$ has a unique infinite connected component $\cC(\o)$ inside the lattice $\cL$.  We set $\hat \o:= \cC(\o)$. To extend this definition to all $\o$, we set $\cC(\o):=\emptyset$ if $\o$ \rosso{does not have} a  unique infinite connected component.

%%%%%%%%%%
\subsection{Palm distribution}\label{subsec_palm}  We recall that we have a simple point process on $\bbR^d$ defined on our probability space $(\O, \cF,\cP)$. This means that to   each $\o \in \O$  we associate a  locally finite subset $\hat \o \subset \bbR^d$ by a measurable map $\O \ni \o \to \hat \o \in  \cN $. 
 We now  recall  the definition of  Palm distribution $\cP_0$ associated to our simple point process
 by distinguishing between two main cases and a special subcase.
For a more detailed discussion we refer to \cite{Fhom3} and references therein.    We remark that our treatment reduces to the one in \cite{DV}   when   $\bbG=\bbR^d$, $\O=\cN$, $\hat \o=\o$,  $V=\bbI$ (i.e. $\t_g x=x+g)$ and $\theta_g \o := \t_{-g} \o=\o -g$.  \rosso{When $\bbG=\bbR^d$ and in the special discrete case treated below, the Palm distribution $\cP_0$ can be thought of as the probability measure $\cP$ conditioned to the event $\{0\in \hat \o\}$. For the special discrete case see \eqref{zazzera} below, while for $\bbG=\bbR^d$ some care is required  as the above event has zero $\cP$--probability (see \cite{DV,ZP} for more details)}.
 We will write $\bbE[\cdot]$ and $\bbE_0[\cdot]$ for the expectation w.r.t. $\cP$ and $\cP_0$, respectively\footnote{With some abuse, when $f$ has  a complex form, we will write $\bbE[f(\o)]$ instead of $\bbE[f]$, and similarly  $\bbE_0[f(\o)]$ instead of $\bbE_0[f]$}.

\smallskip
$\bullet$ 
{\sl Case $\bbG=\bbR^d$}. The intensity of the simple point process  $\hat\o$   is defined as
\be\label{puffo1}
m:= \bbE\left[ \sharp \left( \hat \o \cap  [0,1)^d \right)\right]\,.\en
We will   assume that   $m\in (0,+\infty)$. 
By the $\bbG$-stationarity of $\cP$ we have $m \ell(B)=\bbE\left[ \sharp \left(\hat \o \cap B\right)\right]$ for any  $B\in \cB (\bbR^d)$.
Then   the Palm distribution $\cP_0$ is the probability measure on $(\O,\cF)$ such that, for any $U\in \cB(\bbR^d)$  with $0<\ell (U)<\infty$ ($\ell(U)$ is the Lebesgue measure of $U$), 
\be\label{palm_classica}
\cP_0(A):=\frac{1}{m  \ell(U) }\int _\O d\cP(\o) \sum _{x\in \hat \o \cap U}   \mathds{1}_A(\theta_{g(x)} \o)\,, \qquad \forall A\in \cF
\,.\en
One can check that $\cP_0$ has support inside the set $ \O_0:=\{\o\in \O\,:\, 0\in \hat \o\}$.

\smallskip
$\bullet$ {\sl  Case $\bbG=\bbZ^d$}.
The intensity of the simple point process   $\hat \o$ is defined as
\be\label{puffo2}
m:= \bbE\left[ \sharp \left( \hat \o \cap \D \right)\right]/ \ell(\D) \,.
\en
%where $\ell(\cdot)$ denotes the Lebesgue measure.
%Note that, when  $\bbG=\bbR^d$, by the $\bbG$--stationary of $\cP$ the above r.h.s.  does not change when replacing
%$\D$ by any Borel subset $\D'\subset \bbR^d$ with positive Lebesgue measure.
 By the $\bbG$-stationarity of $\cP$,  $m \ell(B)=\bbE\left[ \hat \o\left( B \right)\right]$ for any $B\in \cB(\bbR^d)$ which is an overlap of translated cells $\t_g \D$ with $g\in \bbG$. We will assume that $m\in (0,+\infty)$.  
 Then the  Palm distribution $\cP_0$  is the probability measure on $\left(\O\times \D,\cF\otimes \cB(\D)\right)$ such that
\be\label{Palm_Z}
\cP_0(A):=\frac{1}{m\, \ell(\D)}\int _\O d\cP(\o) \sum  _{x\in \hat \o \cap \D} \mathds{1}_A(  \o,x )\,, \qquad \forall A\in \cF\otimes \cB(\D)
\,.\en
 $\cP_0$ has support inside $\O_0:=\{(\o, x)\in \O\times \D\,:\,x\in \hat \o\}$.

Note that in the Example of Subsection \ref{virgilio},  the set $\hat \o \cap \D$  equals $\{0,a\}\cap \cC(\o)$, $a$ being as in Figure \ref{apetta}. Moreover, $\O_0= \{ (\o, 0)\,:\, \o\in \O\,, \;0 \in \cC(\o)\}
\cup  \{ (\o, a)\,:\, \o\in \O\,, \;a \in \cC(\o)\} $.

%%%%%%%%%%%%%%%%%%%%%%%%%%%%%%
%%%%%%%%%%%%%%  caso speciale   %%%%%%%%
%%%%%%%%%%%%%%%%%%%%%%%%%%%%%%

\smallskip
$\bullet$ {\sl Special discrete case: $\bbG=\bbZ^d$, $V=\bbI$ and $\hat \o \subset \bbZ^d$  $\forall \o\in \O$} (see \eqref{trasferta2}). This is a subcase of the previous one and in what follows we will call it simply  \emph{special discrete case}. Due to its relevance in discrete probability, we discuss it apart pointing out some simplifications. As $V=\bbI$ we have $\D=[0,1)^d$. In particular  (see the  case   $\bbG=\bbZ^d$) $\cP_0$  is concentrated on $\{ \o \in \O:0\in \hat \o \}\times \{0\}$. Hence  we can think of $\cP_0$ simply as a probability measure 
concentrated on the set $\O_0:=\{ \o \in \O:0\in \hat \o\}$.
Formulas \eqref{puffo2} and \eqref{Palm_Z} 
%and  \eqref{campanelloZ} 
then read
\be\label{zazzera}
 m:= \cP(0\in \hat \o) \,, \qquad 
%\label{zazzera0}\\& 
 \cP_0(A):= \cP\left( A\,|\, 0 \in \hat \o \right) \qquad \forall A\in \cF
 \,.
 \en
In what follows, when treating the special discrete case, we will use the above identifications without explicit mention.

%%%%%%  campbell formula  - da spostare %%%%%%%%%%%%%%%%%%%
%
%
% \sum_{g\in \bbZ^d} \int _{\O}d\cP_0 (\o ) f(g, \o  )=\frac{1}{\bbE[n_0]} \sum_{g\in \bbG} \int _\O  d\cP( \o)  n_g(\o)  f\bigl(g, \theta_g\o   \bigr )\,, \label{campanelloZ_sp} 
%for any   Borel function $f: \bbZ^d\times \O_0 \to [0,+\infty)$
%
%
%%%%%%% fine campbell formula %%%%%%%%%%%%%%

%\rosso{\club Vanno ricordate le propriet\`a ergodiche di $\cP_0$. Va detto che $\cP_0=\cP(\cdot|\O_0)$ almene per $\bbG=\bbR^d$.}

%%%%%%%%%%%%%%%%%%%%%%%%%
\subsection{Basic assumptions}\label{sec_basic_ass} Recall that 
the jump probability rates are given by the measurable function $c_{x,y}(\o)$ in 
 \eqref{ciccino}, which is symmetric in $x,y$ (i.e. $c_{x,y}(\o)= c_{y,x}(\o)$)   and recall our convention \eqref{cicciobello}. We also define
 \be \label{clacson}
 c_x(\o):= \sum _{y \in \hat \o } c_{x,y} (\o) \qquad \forall x \in \hat \o\,.
 \en

We define the functions 
 $\l_k:\O_0 \to [0,+\infty]$  (for $k=0,2$) as follows:\\
 \begin{equation}\label{altino15}
 \begin{split}
 & 
 \begin{cases}
  \l_k(\o):=\sum_{x\in \hat \o} c_{0,x}(\o)|x|^k\\
 \O_0=\{\o\in \O\,:\, 0\in \hat \o\}
 \end{cases}  
\Large{\substack{ \text{\;\;Case $\bbG=\bbR^d$ and}\\\text{\;\;\;\;\;\;special discrete case\,,}}}
\\
& \begin{cases}
  \l_k(\o,a):=
 \sum_{x\in \hat \o} c_{a,x}(\o)|x-a|^k   \\
      \O_0=\{(\o, x)\in \O\times \D\,:\,x\in \hat \o \}
\end{cases}
 \text{\;\;Case $\bbG=\bbZ^d$\,.}
\end{split}
\end{equation}
For  $\bbG=\bbR^d$ and in the special discrete case, $\l_0(\o)=c_0(\o)$ for all $\o\in\O_0$.

We collect below all our assumptions leading to homogenization of the massive Poisson equation of the diffusively rescaled random walk (some of them have already been mentioned 
in Section \ref{cima}). We will not recall here the above homogenization results obtained in \cite{Fhom3}, as not necessary. On the other hand, we will collect some of their consequences  in Proposition \ref{replay} in Section \ref{figlio_stress}, since used in the proof of Theorem \ref{teo1}.

\smallskip
\noindent
{\bf Assumptions for homogenization}:
\emph{
\begin{itemize}
\item[(A1)] $\cP$ is stationary and ergodic w.r.t. the action $(\theta_g)_{g\in \bbG}$ of the group $\bbG$; 
 \item[(A2)] the  \verde{intensity} $m$ of the \rosso{simple point process}  $\hat  \o $ 
    is finite and positive (cf. \eqref{puffo1}, \eqref{puffo2} and \eqref{zazzera});
\item[(A3)]  the  $\o$'s in  $\O$ such that $  \theta_g\o\not = \theta _{g'} \o $   for all $ g\not =g'$ in $\bbG$ form  a measurable set of $\cP$--probability $1$;
\item[(A4)] the $\o$'s in $\O$ such that,  
for all   $g\in \bbG$ and  $x,y \in \bbR^d$,  
\begin{align}
&\widehat{\theta_g\o}  =\t_{-g}( \hat \o )  \,,\label{base}\\
& c_{x,y} (\theta_g\o)= c_{\t_g x, \t_g y} (\o) \,,\label{montagna}
%\\ %& n_x(\t_z\o)= n_{x+z}(\o)\,.
 \end{align}
 form a  measurable set of $\cP$--probability $1$;
\item[(A5)]  for $\cP$--a.a. $\o\in \O$, for all  $x,y\in \bbR^d$  it holds \be\label{pietra}
  c_{x,y}(\o) = c_{y,x}(\o)\,;
\en
\item[(A6)]  for $\cP$--a.a. $\o\in \O$, given any $x,y \in  \hat \o$ there exists a  path $x=x_0$, $x_1$,$ \dots, x_{n-1}, x_n =y$ such that $x_i \in \hat \o$ and  $c_{x_i, x_{i+1}}(\o) >0$ for all $i=0,1, \dots, n-1$;
\item[(A7)]     $\l_0 , \l_2 \in L^1(\cP_0)$;
   \item[(A8)]  $L^2(\cP_0)$ is separable.
\end{itemize}
}

\smallskip

The above assumptions implies that   $\cP$--a.s.\ the random walk  $X_t^\o$  on $\hat \o$ introduced in Section \ref{cima} is well defined for all times $t\geq 0$
 (recall that a set $A\subset \O$ is called translation invariant if $\theta_g A=A$ for all $g\in \bbG$):
 %%%%%%%%%%%%%%%%%%%%%%%%%%%%%%%%%%%%%%%%%
  \begin{Lemma}\label{lemma_no_TNT}\cite[Lemma 3.5]{Fhom3}  There exists  a   translation invariant measurable set $\cA \subset\O$  with $\cP(\cA)=1$ such that, for all $\o \in \cA$, (i) $c_x(\o)\in (0,+\infty)$  for all  $x \in \hat \o$ (cf. \eqref{clacson}), (ii)  the  continuous--time Markov chain on $\hat \o$ starting at any $x_0\in \hat \o$, with waiting time parameter $c_x(\o)$ at $x\in \hat \o$ and  with  probability $c_{x,y}(\o)/c_x(\o) $ for a jump from $x$ to $y$, is non-explosive.
\end{Lemma}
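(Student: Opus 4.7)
The plan is to construct $\cA$ as a countable intersection of translation-invariant $\cP$-full-measure sets. The sets associated with (A3)--(A6) are already of this type, so it remains to adjoin two further $\bbG$-invariant full-measure sets $\cA_{\mathrm{fin}}$ and $\cA_{\mathrm{nexp}}$ corresponding to items (i) and (ii). Translation invariance of both will be automatic: each defining condition is phrased purely in terms of $\hat\o$ and $c_{\cdot,\cdot}(\o)$, and by the covariance relations \eqref{base}--\eqref{montagna} such conditions transfer cleanly under $\theta_g$ (e.g., $c_x(\theta_g\o)=c_{\t_g x}(\o)$).

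For (i), assumption (A7) yields $\l_0\in L^1(\cP_0)$, so in particular $c_0(\o)<\infty$ for $\cP_0$-a.a.\ $\o$ (using $\l_0=c_0$ on $\O_0$ in the $\bbG=\bbR^d$ and special discrete cases; the general $\bbG=\bbZ^d$ case is analogous with the base point varying in $\D$). I would then apply the Palm formula \eqref{palm_classica} (resp.\ \eqref{Palm_Z}) to the event $\{\l_0=+\infty\}$ with $U$ ranging over a countable exhaustion of $\bbR^d$ by bounded Borel sets. The covariance $c_{x,y}(\theta_g\o)=c_{\t_g x,\t_g y}(\o)$ translates $c_0(\theta_{g(x)}\o)<\infty$ into $c_x(\o)<\infty$, yielding $c_x(\o)<\infty$ for every $x\in\hat\o$ on a $\cP$-full-measure set. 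Positivity $c_x(\o)>0$ is then immediate from (A6): on the associated full-measure set, every $x\in\hat\o$ has at least one neighbour $y\in\hat\o$ with $c_{x,y}(\o)>0$.

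For (ii), the key observation is that by (A6) the chain on $\hat\o$ is irreducible, and for irreducible continuous-time Markov chains on a countable state space non-explosion from one state is equivalent to non-explosion from all states; it therefore suffices to establish non-explosion from a single canonical base point. I would use the ``environment seen from the walker'' process $\tilde\eta_t:=\theta_{g(X_t)}\o$, initially defined only up to the explosion time $T_\infty$. By (A5) and the $\bbG$-invariance of $\cP$, together with the Palm characterization, $\cP_0$ is reversible, and hence invariant, for $\tilde\eta_t$. To bypass the circularity of defining $\tilde\eta_t$ before knowing non-explosion, I would truncate: set $\tau_N:=\inf\{t:c_{X_t}(\o)>N\}$; up to $\tau_N$ the rates are bounded so the chain is manifestly well-defined and $\tilde\eta_{t\wedge\tau_N}$ is a $\cP_0$-stationary process. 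By stationarity and a Campbell-type identity, the expected number of jumps on $[0,t\wedge\tau_N]$ is bounded by $t\,\bbE_0[c_0]<\infty$. Since $c_0<\infty$ $\cP_0$-a.s., we deduce $\tau_N\uparrow+\infty$ $\cP_0$-a.s.\ as $N\to\infty$, and hence $T_\infty=+\infty$ $\cP_0$-a.s. Transporting this conclusion through the Palm formula produces a $\cP$-full-measure, $\bbG$-invariant set of environments on which non-explosion holds from every $x_0\in\hat\o$.

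The main obstacle is precisely the chicken-and-egg character of (ii): one needs the walk to be defined before $\tilde\eta_t$ is a meaningful Markov process, and this is exactly what non-explosion is supposed to establish. The truncation-and-limit scheme above resolves it, but it requires carefully verifying the Campbell-type identity for the jump intensity of the truncated walk (namely that the jump-counting measure has intensity $c_0\,d\cP_0\otimes dt$). Notably, the second-moment assumption $\l_2\in L^1(\cP_0)$ from (A7) is not used in the non-explosion argument; it plays its role later in the homogenization analysis.
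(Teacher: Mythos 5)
Your overall architecture is sound: intersecting translation-invariant full-measure sets, handling item (i) via $\l_0\in L^1(\cP_0)$ and the Palm formula (with positivity from (A6)), reducing item (ii) to the Palm base point by irreducibility, and correctly observing that only $\l_0\in L^1(\cP_0)$ (not $\l_2$) is needed. However, the non-explosion step contains a concrete error: the stopped process $\tilde\eta_{t\wedge\tau_N}$ is \emph{not} $\cP_0$-stationary --- stopping a stationary process at a random time destroys stationarity --- so the load-bearing bound ``the expected number of jumps on $[0,t\wedge\tau_N]$ is at most $t\,\bbE_0[c_0]$'' does not follow from what you wrote. The repair is to truncate the \emph{rates} rather than the time: set $c^N_{x,y}(\o):=c_{x,y}(\o)\,\mathds{1}(c_x(\o)\leq N)\,\mathds{1}(c_y(\o)\leq N)$, which is still symmetric, has escape rates bounded by $N$ (hence is manifestly non-explosive), and for which one can honestly verify via the Campbell/mass-transport identity that $\cP_0$ is reversible, hence stationary, for the associated environment process; this gives the jump-count bound $t\,\bbE_0[c^N_0]\leq t\,\bbE_0[c_0]$. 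Coupling this auxiliary walk to the original one (they coincide on $[0,\tau_N)$) transfers the bound, and letting $N\uparrow\infty$ with monotone convergence shows that the original walk makes a.s.\ finitely many jumps before $t\wedge\sup_N\tau_N$. Since on the explosion event $\{T_\infty\leq t\}$ one has $\sum_n 1/c_{Y_n}<\infty$ along the jump chain, hence $c_{Y_n}\to\infty$ and $\sup_N\tau_N=T_\infty$ with infinitely many jumps before it, this yields a contradiction and non-explosion follows. Note that your stated deduction ``$c_0<\infty$ $\cP_0$-a.s.\ $\Rightarrow\ \tau_N\uparrow+\infty$'' has the logic backwards: $\tau_N\uparrow+\infty$ is the \emph{conclusion} extracted from the finiteness of the jump count, not a consequence of $c_0<\infty$ alone.

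For comparison, the cited proof of \cite[Lemma~3.5]{Fhom3} (as recalled in the proof of Proposition~\ref{udine} above) sidesteps the continuous-time chicken-and-egg entirely by working with the discrete-time \emph{jump chain} of the environment viewed from the walker, which is defined without any reference to explosion. One checks via Campbell's identity that $\bbE_0[c_0]^{-1}c_0(\o)\,d\cP_0(\o)$ is reversible and ergodic for that chain, and Birkhoff's theorem applied to $1/c_0$ gives $\sum_n 1/c_{Y_n}=+\infty$ a.s., which is the standard necessary and sufficient criterion for non-explosion. Your route, once repaired as above, needs only stationarity (a first-moment bound) rather than ergodicity of the jump chain, which is a genuine economy; but as written the stationarity claim for the stopped process is the gap.
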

%%%%%%%%%%%%%%%%%%%%%%%%%%%%%%%%%%%%%%%%%
In Section \ref{HL_EP} we will make an  additional  assumption (called Assumption (SEP)) assuring that the simple exclusion process introduced via the universal graphical construction is well defined for all times (see \eqref{mammaE} for its  generator on local functions). Hence, by thinking the random walk $X_t^\o$ as a simple exclusion process with only one particle, also Assumption (SEP) guarantees the well-definedness of $X_t^\o$.

 We now  report some other comments on the above assumptions  (A1),...,(A8) taken from \cite[Section~2.4]{Fhom3}  (where more details are provided).
By  Zero-Infinity Dichotomy (see \verde{\cite[Proposition~10.1.IV]{DV}}) and Assumptions (A1) and (A2), for $\cP$--a.a.~$\o$ the set $\hat \o$ is infinite.
 (A3) is a rather \rosso{superfluous} assumption as  one can   add  some randomness  by  enlarging $\O$ to assure (A3). The assumption of measurability in (A3) and (A4) is always satisfied for $\bbG=\bbZ^d$ by \eqref{cicciobello} (as discussed in \cite[Section~2.4]{Fhom3}, one can even weaken this requirement).
 %  For example, if $(\O,\cF, \cP)$ describes  a random simple point process on $\bbR^d$ obtained by periodizing a random simple point process on $[0,1]^d$, then  (A3) is gained  by marking points by i.i.d. random variables with non-degenerate distribution.  Due to \cite[Lemma~3.4]{Fhom3}, under Assumptions (A1),...,(A6) and $\bbE_0[\l_0]<+\infty$,
%for almost all $\o\in \O$ the continuous time random walk $X_t^\o$ on $\hat \o$ with  probability rates $c_{x,y}(\o)$  for a jump from $x$ to $y$ is well defined for all times $t\geq 0$ (i.e. a.s. explosion does not occur).
Considering  the random walk $X_t^\o$, (A5) and   (A6) correspond  $\cP$--a.s. to  reversibility of the  counting measure  and to irreducibility.  Finally, we point out that,    by  \cite[Theorem~4.13]{Br}, (A8) is fulfilled if $(\O_0,\cF_0,\cP_0)$ is a separable  measure space where $\cF_0:=\{A\cap \O_0\,:\, A\in \cF\}$ (i.e. there is a countable family $\cG\subset  \cF_0$ such that  the $\s$--algebra  $\cF_0$ is generated by $\cG$). For example, if $\O_0$ is a separable metric space and 
$\cF_0= \cB(\O_0)$ (which is valid if $\O$ is a separable metric space and 
$\cF= \cB(\O)$) then (cf. \cite[p.~98]{Br}) $(\O_0,\cF_0,\cP_0)$ is a separable  measure space  and (A8) is valid.

\medskip

%
%\medskip
%The following result from \cite{Fhom3} provides conditions assuring that   the random walk  $X_t^\o$  on $\hat \o$ with formal generator 
%\eqref{mamma} is well defined:
%  \begin{Lemma}\label{lemma_no_TNT}\cite{Fhom3} Assume Assumptions (A1),...,(A6) and that   $\l_0\in L^1(\cP_0)$. Then there exists  a translation invariant set $\cA \subset\O$ with $\cP(\cA)=1$ such that, for all $\o \in \cA$, (i) $r_x(\o):=\sum_{y\in \hat \o} r_{x,y}(\o)\in (0,+\infty)$  $\forall x \in \hat \o$, (ii)  the  continuous--time Markov chain on $\hat \o$ starting at any $x_0\in \hat \o$, with waiting time parameter $r_x(\o)$ at $x\in \hat \o$ and  with  probability $r_{x,y}(\o)/r_x(\o) $ for a jump from $x$ to $y$, is non-explosive.
%\end{Lemma}

 We now explain  why the Palm distribution $\cP_0$ will play a crucial role in the hydrodynamic limit of the simple \verde{exclusion} process.   $\cP_0$ is indeed the natural object to express the \rosso{ergodic} property of the environment when dealing with observables keeping track also of the local microscopic details of the environment.  This is formalized by the following result which will be frequently used below (cf.   \cite[\verde{Appendix}~B]{Fhom1}, \cite[\verde{Proposition}~3.1]{Fhom3} and recall that $\bbE_0$ denotes the expectation w.r.t. $\cP_0$):
%$\cP$ is stationary and ergodic w.r.t. the action $(\theta_g)_{g\in \bbG}$ of the group $\bbG$; 
 %\item[(A2)] the  $\cP$--intensity $m$ of the random measure  $\hat  \o $ 
\begin{Proposition} \label{prop_ergodico}   Let  $f: \O_0\to \bbR$ be a measurable function with $\|f\|_{L^1(\cP_0)}<\infty$. Then there exists a translation invariant   measurable subset $\cA[f]\subset \O$  such that $\cP(\cA[f])=1$ and such that,  for any $\o\in \cA[f]$ and any  $\varphi \in C_c (\bbR^d)$, it holds
\begin{equation}\label{limitone}
%\lim _{\e\da 0}  \e^{d} \int _{\bbR^d} d \hat{\o} (x)  \varphi (\e x ) g(\t_x \o ) =
\lim_{\e\da 0} \int  d  \mu_\o^\e  (x)  \varphi (x ) f(\theta_{g( x/\e)} \o )=
\int  dx\,m\varphi (x) \cdot \bbE_0[f]\,,
\end{equation}
where $\mu^\e_\o := \sum_{x\in \hat \o} \e^d \d_{\e x}$.
\end{Proposition}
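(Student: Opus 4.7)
I would interpret the claim as a vague law of large numbers for the $\bbG$-stationary random measure
\[\Xi^f_\o:=\sum_{y\in\hat\o}f(\theta_{g(y)}\o)\,\delta_y\]
on $\bbR^d$ (with the obvious modification $f(\theta_{g(y)}\o,\b(y))$ when $\bbG=\bbZ^d$ and $V\neq\bbI$). After the substitution $x=\e y$, the integral in \eqref{limitone} equals $\int\varphi\,d\Xi^{f,\e}_\o$ with $\Xi^{f,\e}_\o(\cdot):=\e^d\Xi^f_\o(\e^{-1}\cdot)$; thus \eqref{limitone} is precisely the vague convergence $\Xi^{f,\e}_\o\toup m\bbE_0[f]\,\ell$ for $\cP$-a.a.~$\o$. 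The Palm identities \eqref{palm_classica}--\eqref{Palm_Z} combined with the $\bbG$-equivariance \eqref{base}--\eqref{montagna} show that $\Xi^f_\o$ is a $\bbG$-stationary random measure with intensity measure $m\bbE_0[f]\,\ell$.

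Next I would apply the multiparameter pointwise ergodic theorem---Wiener's theorem when $\bbG=\bbR^d$ and Birkhoff's $d$-dimensional version when $\bbG=\bbZ^d$---to the function $F(\o):=\Xi^{|f|}_\o(\D)=\sum_{x\in\hat\o\cap\D}|f(\theta_{g(x)}\o)|$, which lies in $L^1(\cP)$ by the intensity computation above together with $|f|\in L^1(\cP_0)$. Ergodicity of $(\theta_g)_{g\in\bbG}$ produces, on a $\bbG$-invariant set of full $\cP$-measure, convergence of the $\bbG$-averages of $F\circ\theta_g$ over large cubes $C_N\subset\bbR^d$ to $\bbE[F]=m\,\ell(\D)\,\bbE_0[|f|]$. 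Unfolding the average via the tiling $\{\t_g\D\}_{g\in\bbG}$ of $\bbR^d$ and \eqref{base} rewrites it as $|C_N|^{-1}\Xi^{|f|}_\o(C_N^*)$, where $C_N^*:=\bigsqcup_{g\in C_N\cap\bbG}\t_g\D$ differs from $C_N$ only by a boundary layer of vanishing relative volume. Running the same argument for $f$ itself and setting $\e=1/N$ yields $\e^d\Xi^f_\o(\e^{-1}Q)\to m\bbE_0[f]\,\ell(Q)$ for any fixed cube $Q$; intersecting countably many $\bbG$-invariant full-measure sets (indexed by a dense countable family of cubes with rational vertices) produces a single translation invariant $\cA[f]\subset\O$ of full $\cP$-measure on which the convergence holds simultaneously for every such cube.

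To pass from indicators of rational cubes to arbitrary $\varphi\in C_c(\bbR^d)$, I would approximate $\varphi$ uniformly from above and below by step functions adapted to this countable family of cubes, controlling the approximation error via the analogous statement applied to $|f|$, which delivers the uniform bound $\limsup_{\e\da 0}\e^d\sum_{y\in\hat\o,\,\e y\in K}|f(\theta_{g(y)}\o)|<\infty$ on any compact $K\supset\operatorname{supp}\varphi$. I expect the main technical obstacle to be arranging $\cA[f]$ to be genuinely $\bbG$-invariant (not merely of full $\cP$-measure) together with handling the boundary layer in the unfolding; both points are why one organises the ergodic averages around the fundamental cell $\D$ and its $\bbG$-translates rather than around bare Euclidean cubes.
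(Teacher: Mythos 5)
Your plan is correct and is essentially the argument behind this proposition: the paper itself does not reprove it but defers to \cite[App.~B]{Fhom1} and \cite[Prop.~3.1]{Fhom3}, where the proof is exactly of this form — view the weighted empirical measure as a $\bbG$-stationary random measure with intensity $m\,\bbE_0[f]\,\ell$ (via the Palm identities and the equivariance relations), apply the multiparameter pointwise ergodic theorem to the cell mass $\Xi^{|f|}_\o(\D)$ and to $f^\pm$, unfold over the tiling $\{\t_g\D\}$ with a negligible boundary layer, and pass from rational cubes to general $\varphi\in C_c(\bbR^d)$ by a Riemann-sum sandwich controlled through the $|f|$-bound. The two points you flag as delicate (genuine $\bbG$-invariance of $\cA[f]$ and the boundary layer) are indeed the only places requiring care, and both are handled as you indicate.
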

\rosso{We point out that the above proposition implies that $m=\lim _{\ell \uparrow \infty} \sharp ( \hat \o \cap [-\ell,\ell]^d)/(2\ell)^d $ $\cP$--a.s.}

We can now also introduce the   effective homogenized matrix $D$, defined in terms of the Palm distribution:
\begin{Definition}\label{def_D} We define the \emph{effective homogenized  matrix}  $D$  as 
the unique $d\times d$ symmetric matrix such that:

\smallskip

$\bullet$ {\bf Case $\bbG=\bbR^d$ and special discrete case}
 \begin{equation}\label{def_D_R}
 a \cdot Da =\inf _{ f\in L^\infty(\cP_0) } \frac{1}{2}\int _{\O_0} d\cP_0(\o)\sum_{x\in \hat \o} c_{0,x}(\o) \left
 (a\cdot x - \nabla f (\o, x) 
\right)^2\,,
 \end{equation}
 for any $a\in \bbR^d$, 
 where $\nabla f (\o, x) := f(\theta_{g(x)} \o) - f(\o)$.
 
 \smallskip
 
$\bullet$   {\bf Case $\bbG=\bbZ^d$ }
\begin{align}
& a \cdot Da= \label{def_D_Z}  \\
& \inf _{ f\in L^\infty(\cP_0) } \frac{1}{2} \int_{\O\times \D}d\cP_0(\o,x) \sum_{y \in \hat \o} c_{x,y}(\o) \left
 (a\cdot (y-x)  -
 \nabla f (\o, x,y-x)
\right)^2\,,\nonumber
 \end{align}
 for any $a\in \bbR^d$,
 where $\nabla f (\o, x,y-x) := f(\theta_{g(y)} \o, \beta(y) ) - f(\o,x)$ (recall \eqref{attimo}).
 \end{Definition}

\rosso{We give some comments on the above definition of $D$. Firstly, it is well posed due to (A7).
We also point out that the effective homogenized  matrix $D$, which is defined by a  variational formula,  can be computed explicitly  
essentially only  in dimension  $d=1$  with   positive conductances $c_{x,y}(\o)$ only between nearest neighboring points $x,y$ of $\hat \o$  (see e.g. \cite{Bi} and \cite[Eq.~(4.22)]{demasi}). On the other hand, 
in the last years numerical approximation methods for $D$ have been developed in quantitative stochastic homogenization theory (see e.g.~\cite{EGMN}).}

\rosso{Under Assumption (A1),...,(A8) the random walk $X_t^\o$ satisfies a weak form of central limit theorem where $2 D$ equals the asymptotic diffusion matrix  (cf.~\cite[Theorem~4.4]{Fhom3}). Since  the position of the random walk can be thought of as an antisymmetric additive functional of the environment viewed from the particle,  $D$ has the same structure of a Green-Kubo formula (cf.~\cite{demasi,KV,KLO,MP,Sp} and references therein).}

Finally we introduce an additional assumption assuring a weak form of  convergence for the $L^2$-Markov semigroup and the  $L^2$-resolvent  associated to the random walk $X_t^\o$ as discussed  in Section \ref{figlio_stress} (recall definition \eqref{simplesso} of $\D$).

\smallskip
\noindent
{\bf Additional assumption for semigroup and resolvent  convergence}:
\emph{
\begin{itemize}
\item[(A9)]  
At least one of the following conditions is satisfied:
 \begin{itemize}
 \item[(i)] for $\cP$--a.a.~$\o$ 
  $\exists C(\o)>0$ such that 
  \be\label{crostino}
 \sharp( \hat \o \cap \t_k \D) \leq C(\o) \text{ for all } k\in \bbZ^d\,;
  \en
  \item[(ii)] 
   at cost to enlarge the probability space $\O$ one can define random  variables $(N_k) _{k\in \bbZ^d}$ with $ \sharp( \hat \o \cap \t_k \D) \leq N_k$ and such that, for some $C_0\geq 0$, it holds  
 \begin{align}
 & \sup_{k \in \bbZ^d} \bbE[N_k]<+\infty\,,\qquad   \sup_{k \in \bbZ^d}\bbE[N_k^2]  <+\infty\,,\label{zuppetta} 
 \\
 & |\text{Cov}\,(N_k, N_{k'})| \leq  
  C_0 |k-k'| ^{-1}  \qquad \forall k\not = k'\text{ in }\bbZ^d \,.\label{intinto}
   \end{align}
 \end{itemize}
 \end{itemize}
}
\begin{Remark}  If one set $N_k:= \sharp( \hat \o \cap \t_k \D) $ for $k\in \bbZ^d$, then  to check Condition (ii) in (A9) it is enough  to check that $\bbE[N_0^2]<+\infty$ and \eqref{intinto} (due to  (A1) and (A2)). As discussed in \verde{\cite[Remark~4.3]{Fhom3}},
  when $\bbG=\bbR^d$, \verde{in (A9)}  one can replace the cells $\{\t_k \D\}_{k \in \bbZ^d}$   by the cells of any  lattice partition of  $\bbR^d$.
\end{Remark}

%The above assumptions (A1),...,(A9) are the ones presented in \cite{Fhom3} to get the quenched homogenization of the massive Poisson equation for $\bbL^\e_\o$. 

%\smallskip
%
%
%We report some useful identities taken from Remark 2.1 in \cite{Fhom3}.
%\begin{Remark}\label{jolie}   For all $a,b \in \bbR^d$ it holds $\t_{g(a)} b= a+b$. For all $x \in \bbR^d$ and $g\in \bbG$
%it holds $
%n_x(\theta _g \o)=n_{\t_g x}(\o)$. Given $a,b \in \hat \o$ we have 
%$n_a(\o)= n_0 ( \theta_{g(a)} \o )$, $r_{a,b}(\o) = r_{0,b-a}  ( \theta_{g(a)} \o )$,  $c_{a,b}(\o) = c_{0,b-a} ( \theta_{g(a)} \o)$ and $\widehat{\theta_{g(a)} \o}= \hat \o -a$.
%\end{Remark}
%
%
%As discussed in \cite[Section 3]{Fhom3} under the above Assumptions (A1),...,(A9)
%for almost all $\o\in \O$ the continuous time random walk $X_t^\o$ on $\hat \o$ with  probability rates $c_{x,y}(\o)$  for a jump from $x$ to $y$ is well defined for all times $t\geq 0$ (i.e. a.s. explosion does not occur).
% \verde{\club migliorare By the arguments presented in the proof of Lemma 6.1  in \cite[Sections~6,7]{F1} one indeed gets 
%.... 
%Thanks to ref{marvel0} and  \eqref{claudio2}    applied to the function $\psi(r):=1/(1+ r^{d+1})$,  the limits \eqref{marvel1}  and \eqref{marvel2} follow   by  the same arguments used in the proofs of Corollary 2.5 and Lemma 6.1  in \cite[Sections~6,7]{F1}.
%}

 %%%%%%%%%

%%%%%%%%%%%%%%%%%%%%%%%%%%%%%%%%%%%%%%%%%%%%%%%%%%%%%%%%%%%%%%%%%%%%%%%%%%%%%%%%%%%%%%%%%%%%%%%%%%%%%%%%%%%%%%%%%%%%%%%%%%%%%%%%%%%%%%%%%%%%%%%%%%%%%%%%
%%%%%%%%%%%%%%%%%%%%%%%%%%%%%%%%%%%%%%%

\section{Hydrodynamic limit }\label{HL_EP}
Given $\o \in \O$ we consider the  simple exclusion process on $\hat\o$ with particle exchange probability rate $c_{x,y}(\o)$.
 To have a well defined process for all times $t\geq 0$, $\cP$--a.s.,  we will  use in Section \ref{sec_GC}  Harris' percolation argument \cite{Du}. To this aim, we define
\be\label{pitagora}
\cE_\o :=\{ \,\{x,y\}\,:\, x,y \in \hat \o\,, \; x\not=y\,\}\,.
\en
Then, 
 given $\o$,   we  associate to each unordered pair  $\{x,y\}\in \cE_\o $    a Poisson process $( N_{x,y}(t))_{t\geq 0}$ with intensity $c_{x,y}(\o)$,  such that  the $N_{x,y}(\cdot)$'s  are independent processes when varying the pair $\{x,y\}$.
The random object $( N_{x,y}(\cdot ) )_{\{x,y\}\in \cE_\o}$ takes value in the product space $ D(\bbR_+, \bbN)^{\cE_\o}$, $ D(\bbR_+, \bbN)$ being endowed with  the standard \rosso{Skorohod} topology.  In the rest, we will denote by $\cK= ( \cK_{x,y}(\cdot ) )_{\{x,y\}\in \cE_\o}$ a generic element of  $ D(\bbR_+, \bbN)^{\cE_\o}$.
Moreover, we denote by  $\bbP_\o$ the  law on $ D(\bbR_+, \bbN)^{\cE_\o}$ 
of $( N_{x,y}(\cdot ) )_{\{x,y\}\in \cE_\o}$.
%and we set \[\bbP:= \int d\cP(\o) \bbP_\o\,.\]
   % An alternative measure--theoretical definition of $\bbP$ can be obtained by adapting the standard construction for the random connection model presented in \cite{MR}.

 \medskip
 
In this section we add the  following assumption (we call it ``SEP"  for ``simple exclusion process" as the assumption is introduced to assure  the existence of the simple exclusion process):
 
 \smallskip
 \noindent {\bf Assumption (SEP)}.  \emph{For $\cP$--a.a. $\o$   there exists $t_0=t_0(\o)>0$ such that  
 for $\bbP_\o$--a.a. $\cK$ 
 the undirected  graph $\cG _{t_0}(\o,\cK)$ with vertex set $\hat \o$ and edges 
 \[ \{ \{x,y\}\in \cE_\o \,:\,  \cK_{x,y}(t_0)  \geq 1\}\] has only connected components with finite  cardinality.}

\smallskip

In Section \ref{sec_GC} we  discuss  the universal  graphical construction of the exclusion process on $\hat \o$ under Assumption (SEP). For $\cP$--a.a. $\o$  the resulting process is a Feller process and the infinitesimal generator $\cL_\o$
acts on local functions as in \eqref{mammaE}  and \eqref{mahmood} (see Proposition \ref{prop_SEP}).

We denote by $\cM$ the space of Radon measures on $\bbR^d$ endowed with the
vague topology and we denote by $D([0,T], \cM)$ the 
 \rosso{Skorohod} space  of c\`adl\`ag paths from $[0,T]$ to $\cM$ endowed with the \rosso{Skorohod} metric (see Section \ref{sec_mammina} for details).   
For each $\e>0$ we consider  the map 
\[ \{0,1\}^{\hat \o}\ni \eta \mapsto  \pi^\e_\o [\eta]:=\e^d \sum_{x\in \hat \o} \eta(x) \d_{\e x} \in \cM\,.\]
Above $\pi^\e_\o [\eta]$ is the so called   \emph{empirical measure} associate to $\eta$.  
Given a   path $\eta_\cdot= (\eta_s )_{0 \leq s \leq T}$ and given $t\in [0,T]$, we define
$\pi^\e_{\o,t} [ \eta_\cdot]:= \pi^\e_\o [ \eta _t  ]$.
%In particular, this allows to define the map
%\be\label{spingo}
%D ([0,T], \{0,1\}^{\hat \o})\ni  ( \eta_t   )_{0 \leq t \leq T} \mapsto 
% ( \pi^\e_{\o,t} [ \eta_\cdot])_{0\leq t \leq T} \in D ( [0,T], \cM)\,.
%\en

In what follows, given $\e>0$ and a probability measure $\mathfrak{m} $ on $\{0,1\}^{ \hat \o}$,
we denote by $\bbP^\e_{\o, \mathfrak{m} }$ the law of the diffusively rescaled exclusion process on $ \hat \o$ with  generator $\e^{-2} \cL_\o$ and initial distribution $\mathfrak{m} $. 
%We also denote by $\bbQ^\e_{\o, \mathfrak{m} }$ the probability measure on  $D  ( [0,T], \cM )$ obtained by pushing forward  $\bbP^\e_{\o, \mathfrak{m} }$ via the map 
%\eqref{spingo}. In other words, $\bbQ^\e_{\o, \mathfrak{m} }$ is the law of the random path $( \pi^\e_{\o,t} [ \eta_\cdot] )_{0\leq t \leq T} $ when $ ( \eta_t   )_{0 \leq t \leq T} $
%is sampled according to  $\bbP^\e_{\o, \mathfrak{m} }$. 
Note that the time $T$ is \rosso{fixed} and does not appear in the notation.

We denote by $(B_t)_{t\geq 0}$ the Brownian motion on $\bbR^d$ with diffusion  matrix given by  $2D$, $D$ being the effective homogenized matrix (see  Definition \ref{def_D}). As $D$ is symmetric we 
can  fix  an orthonormal basis
$\mathfrak{e}_1$,...,$ \mathfrak{e}_d$ of  eigenvectors of $D$, such that $\mathfrak{e}_1$,...,$ \mathfrak{e}_{d_*}$ have   positive eigenvalues, while the other basis vectors have  zero eigenvalue.
    Then the   Brownian motion $(B_t)_{t\geq 0}$ is not degenerate when projected on $\text{span}(\mathfrak{e}_1, \dots, \mathfrak{e}_{d_*})$, while no motion is present along 
 $\text{span}(\mathfrak{e}_{d_*+1}, \dots, \mathfrak{e}_{d})$.  Given a bounded function $f:\bbR^d\to \bbR$ we set  $P_t f(x):=E\left[ f(x+B_t)\right]$.

%%%%%%%%%%%%%%%%%%%%%%%%%%%%%%%%%%%%%%%%%%
%%%%%%%%%%%%%%%%%%%%%%%%%%%%%%%%%%%%%%%%%%
%%%%%%%%%%%%%%%%%%%%%%%%%%%%%%%%%%%%%%%%%%
%%%%%%%%%%%%%%%%%%%%%%%%%%%%%%%%%%%%%%%%%%
%%%%%%%%%%%%%%%%%%%%%%%%%%%%%%%%%%%%%%%%%%
\begin{Theorem}\label{teo1} 
 Suppose that Assumptions (A1),\dots,(A9) and Assumption (SEP) are satisfied.  
  Then there exists a translation invariant measurable set  $\O_{\rm typ}\subset \O$ with $\cP(\O_{\rm typ})=1$, such that
   for any $\o \in \O_{\rm typ}$ the  simple exclusion process 
 is well defined for any initial distribution and exhibits  the following hydrodynamic behavior.
 
Let $\rho_0: \bbR^d \to [0,1]$ be a measurable function and let  $\rho:\bbR^d\times[0,\infty)\to [0,1]$   be the function $\rho(x,t):= P_t \rho_0 (x)$. 
Let $\{\mathfrak{m}_\e\}_{\e >0}$ be an $\e$--parametrized  family of probability measures on $\{0,1\}^{\hat{\o}}$ such that the random empirical measure $\pi_\o ^\e[\eta] $ in $\cM$, with $\eta$ sampled according to $\mathfrak{m}_\e$,  converges in probability to $\rho_0(x)dx$ inside $\cM$. In other words, we suppose that,
  for all $\d>0$ and $\varphi\in C_c(\bbR^d)$, it holds
\be\label{marzolino}
\lim_{\e\da 0} \mathfrak{m}_\e\Big( \Big| \e^d \sum_{x \in \hat \o} \varphi (\e x) \eta(x) -\int _{\bbR^d} \varphi(x) \rho_0(x) dx \Big|>\d\Big)=0\,.
\en

Then:
\begin{itemize}
\item[(i)]  For all $T>0$,  as $\e\da 0$ the random path $( \pi^\e_{\o,t} [ \eta_\cdot])_{0\leq t \leq T} $ in $D  ( [0,T], \cM)$, with 
 $\eta_\cdot=( \eta_t )_{0\leq t \leq T} $ sampled according to $\bbP^\e_{\o, \mathfrak{m}_\e }$, converges in probability to the deterministic path $(\rho(x,t) dx )_{0\leq t \leq T}$. 
 \item[(ii)] For all $T>0$,   $\varphi \in C_c(\bbR^d)$ and $ \d>0$, it holds
 \be\label{pasqualino}
\lim_{\e\da 0} \bbP^\e  _{  \o,\mathfrak{m}_\e } \Big(\sup_{0\leq t \leq T}\Big|  \e^d \sum_{x \in \hat \o} \varphi (\e x) \eta_t (x)   - \int _{\bbR^d} \varphi(x) \rho(x,t) dx\Big| >\d 
\Big)=0\,.
\en
\end{itemize}
\end{Theorem}

The proof of the above theorem is given in Section \ref{ida} (Section \ref{silenzioso} can be replaced by Appendix \ref{sec_passetto}, the two approaches are alternative). The function $\rho(x,t)= P_t \rho_0 (x)$ is the unique  weak solution of the Cauchy system
 \be\label{debolezza}
 \begin{cases}
 \partial_t \rho = \nabla \cdot ( D \nabla \rho) \text{ for }  t>0\,,\\
 \rho(0,\cdot) = \rho_0 \,,
 \end{cases}  
 \en 
 in the sense specified by Lemma \ref{timau} in Section \ref{silenzioso}.

 \begin{Remark} Theorem \ref{teo1} remains valid if    Assumption (SEP)   is  replaced by any other assumption leading to Proposition \ref{prop_SEP} below. Indeed, the latter contains all the properties used  in the proof provided in  Section \ref{ida}. See also Remark \ref{profumino}  for what concerns modifications to Assumption (A9).
 \end{Remark}

\begin{Remark} \rosso{The assumptions of Theorem \ref{teo1} do not include that the effective homogenized matrix $D$ is strictly positive definite. Checking this property  can be  a non-trivial task  (see the discussion on   the non-degeneracy of $D$  in \cite[Introduction and Section 5]{Fhom3}). For an example of degenerate and nonzero $D$ see Appendix \ref{app_santi}.}
\end{Remark}
 %Moreover, Item (ii) in Theorem \ref{teo1} means that  the law of the random path $( \pi^\e_{\o,t} [ \eta_\cdot] )_{0\leq t \leq T} $   weakly converges to  $\d_{\rho(x,t)dx}$ as $\e \da 0$.
%%%%%%%%%%%%%%%%%%%%%%%%

%%%%%%%%%%%%%%%%%%%%%%%%%%%%%%%%%%%%%%%%%%%%
%%%%%%%%%%%%%%%%%%%%%%%%%%%%%%%%%%%%%%%%%%%%%
%%%%%%%%%%%%%%%%%%%%%%%%%%%%%%%%%%%%%%%%%%%%%

\section{Some applications}\label{sec_esa}
There are plenty of examples  to which Theorem \ref{teo1} can be applied.  We discuss here \verde{four} main classes. % For completeness, at the end we mention also an example related to the geometric structures introduced in Section \ref{virgilio}.
%\club  We also stress that one can apply Theorem \ref{teo1} to the same context of \cite{F1}, replacing  there the uniform upper bound on  conductances by  integrability (see also  Section \ref{nonno} and \cite[Section~5.3]{Fhom3}).
\verde{The application of Theorem \ref{teo1} to the simple exclusion process with random jump rates on the Delaunay triangulation is discussed in \cite{FT}.  }
%%%%%%%%%%%%%%%%%%%%%%%%%%%%%%%%%%%%%%%%%%%%%
\subsection{Nearest-neighbor random conductance model on $\bbZ^d$, $d\geq 1$}\label{nonno}
We take $\bbG:=\bbZ^d$  acting on $\bbR^d$ by standard translations, i.e. $\t_g x=x+ g$.  Let $\bbE^d$ be the set of unoriented edges of $\bbZ^d$  and endow $\O:=(0,+\infty)^{\bbE^d}$  with the product topology. Given $\o\in \O$, we write $\o_{x,y}$  for the component of $\o$ associated to the edge $\{x,y\}\in \bbE^d$. %  $\o_{x,y}$ is often thought of as the conductance of the edge $\{x,y\}$.
The action $(\theta _x)_{x\in \bbZ^d}$  is  the standard one:  $(\theta_x \o) _{a,b}:= \o _{a+x,b+x}$. 
 We set $\hat \o := \bbZ^d$, hence  the exclusion process lives on $\bbZ^d$. We define  $c_{x,y}(\o):=\o_{x,y}$ if  $\{x,y\}\in \bbE^d$ and  $c_{x,y}(\o):=0$ otherwise.   
 It is simple to check that 
 Assumptions (A1),..., (A9) are satisfied whenever $\cP$ is stationary and ergodic, $\cP$  satisfies (A3) (which is  \rosso{a rather superfluous assumption, as already commented}) and $\bbE[\o_{x,y}]<+\infty$ for all $\{x,y\}\in \bbE^d$.
\rosso{When $d=1$, $D$ can be explicitly computed and one gets   $D= 1/ \bbE[1/c_{0,1}(\o)]\in [0,+\infty)$ (apply \cite[Proposition~4.1 and Exercise 4.3]{Bi} or use the characterization of $D$ as a.s. limit (for $n\to +\infty$) of  $2n$ times  the effective conductivity under unit potential of the 1d resistor network with node set $[-n,n]\cap \bbZ$ and with nearest-neighbors conductances $c_{x,y}(\o)$ \cite{F_resistor}). For $d\geq 2$ the variational problem in \eqref{def_D_R} leading to $D$ does not have an explicit solution.}

Below, given $k>0$, we say that the random  conductances $\o_{x,y}$  are $k$--dependent if, given $A,B\subset \bbZ^d$ with Euclidean distance between $A$ and $B$ larger than $k$, the random fields
\[ \left( \o_{x,y}\,:\,\{x,y\} \in \bbE^d\,,\; x,y\in A\right) \text{ and }\left( \o_{x,y}\,:\,\{x,y\} \in \bbE^d\,,\; x,y\in B\right)
\] are independent (see \cite[page 178]{G} for a similar definition).

\begin{Proposition}\label{rcm} Assumption (SEP) is satisfied if at least one of the following conditions is satisfied:
\begin{itemize}
\item[(i)]   $\cP$--a.s. there exists a constant $C(\o)$ such that $\o_{x,y}\leq C(\o)$ for all $\{x,y\}\in \bbE^d$; 
\item[(ii)] under $\cP$ the random  conductances $\o_{x,y}$  are independent;
\item[(iii)] under $\cP$ the random  conductances $\o_{x,y}$  are $k$--dependent with $k>0$. 
\end{itemize}
\end{Proposition}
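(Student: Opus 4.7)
The plan is to recast Assumption (SEP) as a subcritical bond percolation statement on $\bbZ^d$ and then verify subcriticality case by case. Since $c_{x,y}(\omega)=\omega_{x,y}$ for $\{x,y\}\in\bbE^d$ and vanishes otherwise, the Poisson process $N_{x,y}$ is identically zero for pairs not in $\bbE^d$, hence $\cG_{t_0}(\omega,\cK)$ is a subgraph of $(\bbZ^d,\bbE^d)$. Conditionally on $\omega$, the events $\{\cK_{x,y}(t_0)\geq 1\}$, $\{x,y\}\in\bbE^d$, are independent Bernoulli events with parameter $p_{x,y}(t_0,\omega):=1-e^{-t_0\omega_{x,y}}$, since the Poisson processes indexed by distinct pairs are independent. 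Hence Assumption (SEP) reduces to: for $\cP$--a.a.\ $\omega$, one may pick $t_0=t_0(\omega)>0$ so small that the resulting Bernoulli bond percolation configuration on $\bbZ^d$ has only finite clusters $\bbP_\omega$--almost surely. This will follow as soon as the edge-opening probabilities are pushed strictly below the critical bond threshold $p_c(\bbZ^d)\in(0,1)$.

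For case (i), I would fix a typical $\omega$ with $\omega_{x,y}\leq C(\omega)$ uniformly in the edge, and choose $t_0(\omega)>0$ so small that $1-e^{-t_0(\omega)C(\omega)}<p_c(\bbZ^d)$. The conditional configuration is then stochastically dominated by a homogeneous subcritical Bernoulli bond percolation, which almost surely has no infinite cluster. For case (ii), independence of $(\omega_{x,y})$ under $\cP$ together with conditional independence of the Poisson processes makes the events $\{\cK_{x,y}(t_0)\geq 1\}$ jointly independent under $\cP\otimes\bbP_\omega$, with common marginal probability $p(t_0):=\bbE\bigl[1-e^{-t_0\omega_{0,e_1}}\bigr]$ by $\bbG$--stationarity. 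Since $\omega_{x,y}<+\infty$ $\cP$--a.s., dominated convergence gives $p(t_0)\downarrow 0$ as $t_0\downarrow 0$, so one may pick a \emph{constant} $t_0>0$ with $p(t_0)<p_c(\bbZ^d)$. Subcritical Bernoulli bond percolation then supplies the no-infinite-cluster statement under $\cP\otimes\bbP_\omega$, and Fubini transfers it to the required quenched statement.

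Case (iii) is the main step. As in (ii), the marginal opening probabilities under $\cP\otimes\bbP_\omega$ are all equal to $p(t_0)$ and tend to $0$ as $t_0\downarrow 0$, but now the events $\{\cK_{x,y}(t_0)\geq 1\}$ are only $k$--dependent rather than independent, since they are measurable functions of the $k$--dependent field $(\omega_{x,y})$ and of the independent Poisson clocks. I would invoke the Liggett--Schonmann--Stacey domination theorem, which ensures that any $k$--dependent $\{0,1\}$--valued edge field on $\bbZ^d$ with uniform marginal parameter $p$ is stochastically dominated from above by an independent Bernoulli edge field of parameter $q(p)$ satisfying $q(p)\downarrow 0$ as $p\downarrow 0$. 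Choosing $t_0$ so small that $q(p(t_0))<p_c(\bbZ^d)$, applying standard subcritical percolation and then Fubini yields (SEP). The main obstacle I anticipate is verifying that the Liggett--Schonmann--Stacey bound produces a subcritical dominating parameter; this is automatic once $p(t_0)$ is taken small enough, and the uniformity of $p(t_0)$ across edges is supplied by $\bbG$--stationarity of $\cP$.
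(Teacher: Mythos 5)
Your proposal is correct and follows essentially the same route as the paper's proof: recast (SEP) as a bond-percolation statement on $\bbZ^d$, settle (i) by comparison with homogeneous subcritical Bernoulli percolation, and settle (ii)--(iii) by making the annealed edge-opening probability small and then using exact independence, respectively stochastic domination of $k$-dependent fields by a subcritical product measure (the paper invokes Grimmett's Theorem (7.65), i.e.\ the Liggett--Schonmann--Stacey bound, exactly as you do), followed by the same Fubini step. The only cosmetic differences are that the paper drives the marginal below $p_c$ by first truncating the conductances at a level $C_0$ and then shrinking $t_0$, whereas you apply dominated convergence directly to $\bbE[1-e^{-t_0\o_{x,y}}]$ --- both are valid --- and that, like the paper, you gloss over the fact that stationarity only makes the marginal constant along each coordinate direction, which is harmless since one can take the maximum over the $d$ directions.
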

We note that, by ergodicity, in Item (i)  one could just restrict to a non-random  upper bound $C$.
 Item  (ii) is a special case of Item (iii). 
% \begin{Remark} Theorem \ref{teo1} applied  with $\cP$  stationary and ergodic, $\cP$  satisfying (A3), $\bbE[\o_{x,y}]<+\infty$ for all $\{x,y\}\in \bbE^d$ and under the assumptions of Proposition~\ref{rcm}  \end{Remark}
\begin{proof} We start with Item (i). As 
  $\bbP_\o ( \cK_{x,y}(t_0)>0)= 1-e^{ - \o_{x,y}  t_0}$,    it is enough to take  $t_0$ small to have $1-e^{ - C (\o)  t_0}< p_c$, $p_c>0$ being the critical probability for the Bernoulli bond percolation on $\bbZ^d$.  
  
  Let us consider Items (ii) and (iii). We present an argument valid for all $d\geq 1$ (but for $d=1$ one can give easily a more  direct proof). By $\bbZ^d$--stationarity the distribution of  $\o_{x,y}$  depends only on the axis  parallel to the edge $\{x,y\}$. To simplify the notation we suppose that the conductances are identically distributed with common distribution $\nu$ (otherwise one  has just to deal with a finite family of distributions $\nu_1,\nu_2,\dots, \nu_d$ in the stochastic domination below).
 We observe that, for any $C_0>0$,   the graph $\cG_{t_0} (\o,\cK)$ described in Assumption (SEP) is contained in the graph $\cG'_{t_0}(\o,\cK)$ with edges $\{x,y\}\in \bbE^d$
such that 
\[ \o_{x,y} > C_0 \qquad \text{ or } \qquad
\begin{cases}
\o_{x,y}  \leq  C_0\,,\\
 \cK_{x,y}(t_0)>0\,.
 \end{cases}
 \]
Given $e\in \bbE^d$ we set $Y_e(\o,\cK):=1 $ if $e$ is present in $\cG'_{t_0}(\o,\cK)$, otherwise we set $Y_e=0$. We define  $\a(C_0):= \nu \left( (C_0 ,+\infty) \right) $. Then, under $ \bbP:= \int d\cP(\o) \bbP_\o$,  the random field $Y=(Y_e)_{e\in \bbE^d}$ is stationary,  satisfies 
 $\bbP( Y_e=1)\leq   \a(C_0) + (1-\a(C_0))( 1-e^{ - C _0 t_0})$ and is given by independent r.v.'s under (ii) and by $k$--dependent r.v.'s under (iii). Hence, fixed $p_*\in(0,p_c)$, we can first choose $C_0$ large and afterwards $t_0$ small to have  $\bbP( Y_e=1)\leq p_*$. In particular, in case (ii) we conclude that $\bbP$--a.s.  $Y$ does not percolate. Similarly to \cite[Theorem (7.65)]{G} (invert the role between $0$ and $1$ there),  \verde{by taking $p_*$ small enough} we get that the random field $Y$ is stochastically dominated by a subcritical Bernoulli bond percolation \verde{(i.e.\ of parameter smaller than  $p_c$)} and therefore  $\bbP$--a.s.  $Y$ does not percolate. Hence, in both cases (ii) and (iii),  by suitably choosing $C_0,t_0$,   the graph $\cG'_{t_0}(\o,\cK)$ has only connected components with finite 
cardinality $\bbP$ a.s. (i.e. for $\cP$--a.a. $\o$ and for $\bbP_\o$--a.a. $\cK$). The same then must  hold for $\cG_{t_0}(\o,\cK)\subset \cG'_{t_0}(\o,\cK)$.
\end{proof}

%As in all above cases the hydrodynamic limit (HL) of Theorem  \ref{teo1} occurs, our result for $\bbZ^d$  is stronger than 
%the one in \cite{F1} (which is  for  conductances $c_{x,y}(\o) $ with value in some interval $[0,C_0]$ and where the HL is for only for a fixed time) and is stronger than the one in \cite{RSS} for time-independent conductances (which is  for  conductances $c_{x,y}(\o) $ with value in some interval $[0,C_0]$ ).

%\rosso{We now give an example inside this special class of simple exclusion processes with degenerate effective homogenized matrix $D$. To this aim take $d=2$ and let  $\cP$ be the  product  probability measure on $\O$, supported on $(0,1]^{\bbE^d}$  such that   $\{ \o _{x,x+e_1}\,:\, x\in \bbZ^d\}$ under $\cP$ are i.i.d. random variables uniformly distributed in $[1,2]$,  while $\{ \o^{-1}_{x,x+e_1}\,:\, x\in \bbZ^d\}$ under $\cP$ are i.i.d. random variables with infinite mean. In this case all, by the above discussion, all our Assumptions (A1),...,(A9)  and  (SEP) are satisfied. We claim that $D_{1,1}>0$ and $D_{2,2}=0$. Before proving our claim let us observe that,  since $D$ is symmetric and  positive semidefinite, its determinant $D_{1,1} D_{2,2}- D_{1,2}^2$ is nonnegative, and therefore $D_{1,2}=D_{2,1}=0$. As a consequence $D$ is degenerate.  }

%%%%%%%%%%%%%%%%%%%%%%%%%%%%%%%%%%%%%%%%%

\subsection{Nearest--neighbor random conductance models on a generic crystal lattice} We consider a generic crystal lattice $\cL=(\cV,\cE)$ in $\bbR^d$, $d\geq 1$, as follows.   
We fix a basis $v_1,\dots, v_d$ of $\bbR^d$, write $V$ for the matrix with columns $v_1,\dots ,v_d$  and write $\D$ for the $d$--dimensional cell \eqref{simplesso}. 
 Given  $g\in \bbG:=\bbZ^d$, we denote by   $\t_g$  the translation \eqref{trasferta2}, i.e. $\t_g x = x+ V g $. % Given a pair $\{x,y\}$ and $g\in \bbG$,  we set $\t_g \{x,y\} := \{ \t_g x, \t_g y\}$.
 We fix a finite set  $\cA \subset \D$. 
 Then the vertex set $\cV$ of the crystal lattice  is given by $\sqcup_{g\in \bbG} ( \t_g \cA)$.
 The edge set $\cE$ has to be a family  of unoriented pairs of vertexes  $\{x,y\}$  with $x\not =y$ in $\cV$, such that $\t_g \cE= \cE$ for all $g\in \bbG$. In particular, the crystal  lattice $\cL=(\cV,\cE)$ is left invariant by the action $ (\t_g )_{g\in \bbG}$ on $\bbR^d$.
As \verde{an} example consider  the hexagonal lattice   $\cL=(\cV,\cE)$ in $\bbR^2$ 
(cf.~Section~\ref{virgilio}).
Then  $\cA=\{0, a\}$ (see Figure~\ref{apetta}).

We take  $\O:= (0,+\infty)^{\cE}$ endowed with the product topology and set $\o_{x,y}:=\o_{\{x,y\}}$.
    The action of $(\theta_g)_{g\in \bbG}$  on $\O$ is  given by
$
\theta _g \o: =(\o_{x-Vg,y-Vg }\,:\, \{x,y\}\in \cE) $ if $ \o=(\o_{x,y}\,:\,\{x,y\}\in \cE)$.
For any $\o \in \O$,  we set $\hat \o := \cV$, hence   our simple exclusion process lives on $\cV$.
The set  $\O_0$ introduced after  \eqref{Palm_Z} equals $\O\times \cA$ and, by \eqref{puffo2}, $m \ell (\D)= |\cA| $. Hence (see \eqref{Palm_Z})
$\cP_0(d\o,dx)= \cP(d\o)  \otimes  {\rm Av}_{u\in\cA } \d_u(dx) $, where ${\rm Av}$ denotes the arithmetic average and $\d_u$ is the Dirac measure at $u$.

We set $c_{x,y}(\o):= \o_{x,y}$ if $\{x,y\}\in \cE$ and $c_{x,y}(\o):=0$ otherwise. If $\cP$  satisfies (A1), (A2), (A3) and the crystal lattice is connected,  then all assumptions (A1),\dots,(A9) are satisfied if $\sum _{y\in \cV}\sum_{u \in \cA} \bbE[ \o_{u,y}]|y-u|^2<+\infty$.
It   the crystal lattice is locally finite (i.e. vertexes have finite degree), then  the above moment bound equal the bound $\bbE[\o_{x,y}]<+\infty$ for $\{x,y\}\in \cE$ (by $\bbG$--stationarity and local finiteness, we have just a finite family of bounds).

For locally finite crystal lattices, by    reasoning as done for the lattice $\bbZ^d$, we get that Assumption (SEP)  is satisfied if 
  the conductances $\o_{x,y}$ 
 are uniformly bounded or if the conductances   $\o_{x,y}$   are independent or $k$--dependent under $\cP$.

%%%%%%%%%%%%%%%%%%%%%%%%%%%%%%%%%%%%%%%%%%

\subsection{\verde{Simple exclusion processes on marked simple point processes}}
%Mott variable range hopping with site-exclusion interaction}\label{ex_mott} Mott random walk  (see e.g. \cite{FM,FSS}) is a \club  \rosso{model} for  Mott variable range hopping in amorphous solids. The following exclusion process is obtained by adding a site-exclusion interaction to multiple Mott random walkers.
We take $\bbG:= \bbR^d$  ($d\geq 1$) acting on $\bbR^d$ by standard translations ($\t_g x=x+ g$). $\O$ is given by the space of marked  counting measures with marks in $\bbR$ \cite{DV}, hence $(\O,\cF,\cP)$ describes  a marked simple point process \cite{DV}. By identifying $\o$ with its support, we have $\o=\{(x_i,E_i)\}$ where $E_i\in \bbR$ and the set $\{x_i\}$ is locally finite. The action $\theta_x$ on $\O$ is given by $\theta _x \o:=\{ (x_i-x, E_i)\}$ if $\o=\{(x_i, E_i)\}$.  \verde{Our simple point process is obtained by setting $\hat \o =\{x_i\}$  when $\o=\{(x_i,E_i)\}$. We take 
% Given the environment $\o=\{(x_i,E_i)\}$,   Mott random walk  
%has state space $\hat \o =\{x_i\}$ and jump rates 
 \be\label{vento}
c_{x_i,x_j}(\o):= \exp\bigl\{ -|x_i-x_j| - u(E_{x_i},E_{x_j})
%|E_{x_i}|-|E_{x_j}|-|E_{x_i}-E_{x_j}|
\bigr\}\,\qquad x_i\not =x_j \,,
\en
where $u:\bbR^2 \to \bbR $ is a symmetric measurable  function bounded from below.
% with bounded uniform norm (or with arbitrarly uniform norm but $\cP$ has support on environments of the form $\o=\{(x_i, E_i)\}$ with $|E_i|\leq A$ for a fixed constant $A$. 
 We point out that Mott random walk,  used to model Mott variable range hopping in amorphous solids (see e.g. \cite{FM,FSS} and references therein) is the random walk with jump rates $c_{x,y}(\o)$ as above, with $u(a,b)=|E_a-E_b|+|E_a|+|E_b|$.}
%(above, without loss of generality,  we have fixed  to absolute  constants  the  inverse temperature and the  localization length).
%We can generalized \eqref{vento} by taking jump rates
%\be
%c_{x_i,x_j}(\o):= \exp\bigl\{ -|x_i-x_j| 
%\en

Suppose that $\cP$ satisfies (A1),(A2) and (A3).  Then $\cP_0$ is simply the  standard  Palm distribution associated to the marked simple point process with law $\cP$ \cite{DV}. Assumptions  (A4), (A5), (A6) are automatically satisfied. As the above space $\O$ is Polish  (see \cite{DV}) and $\O_0=\{\o\,:\, 0 \in \hat \o\} $ is a Borel subset of $\O$, $\O_0$   is separable and therefore (A8) is satisfied.
As proven in \cite[Section~\verde{5.4}]{Fhom3}, 
  (A7)  is \verde{implied} by  the bound  $\bbE \bigl[ |\hat \o \cap [0,1]^d | ^{2} \bigr]<+\infty$. 
Assumption (A9) is verified in numerous examples of marked simple point processes, including the Poisson point process (PPP) with intensity $m\in (0,+\infty)$. Assumption (SEP) is of percolation nature. We show its validity for PPP's. Moreover, since one can consider as well other  jump rates $c_{x,y}(\o)$  for a random walk on a marked simple point process, we state our percolation result in a more general form.
\begin{Proposition}\label{igro1} Suppose that  under  $\cP$   the random set $\{x_i\}$ is a PPP with intensity $m\in (0,+\infty)$. Take 
 jump rates $c_{x,y}(\o)$ satisfying 
 \eqref{montagna} in (A4)  and  \rosso{\eqref{pietra} in} (A5). Suppose that, for $\cP$--a.a. $\o$, 
 $c_{x,y}(\o) \leq g(|x-y|)$   for any $x,y \in \hat \o$, where $g(r)$ is a fixed bounded function such that 
 the map $ x \mapsto g(|x|) $ belongs to  $L^1(\bbR^d, dx)$ (for example take  $g(r)=\verde{C}e^{-r}$ for \eqref{vento}). 
Then 
% there exists $t_0>0$ such that for $\bbP$--a.a.  $(\o,\cK)$ the undirected  graph $\cG _{t_0}(\o,\cK)$ with vertex set $\hat \o$ and edges $\{ \{x,y\}\,:\, x\not =y \text{ in }\hat \o\,,\; N_{x,y}(t_0) \geq 1\}$ has only connected components with finite 
%cardinality. In particular, 
Assumption (SEP) is satisfied.
\end{Proposition}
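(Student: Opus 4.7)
The plan is to show that, under the joint law $\bbP(d\o,d\cK):=\cP(d\o)\bbP_\o(d\cK)$, one can choose a \emph{deterministic} $t_0>0$ such that $\bbP$--a.s.\ the graph $\cG_{t_0}(\o,\cK)$ has only finite connected components; by Fubini this immediately implies (SEP) in the form stated. The strategy is to dominate $\cG_{t_0}$ by a subcritical random connection model on the underlying Poisson point process.

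For the stochastic domination, conditionally on $\o$ the events $\{\cK_{x,y}(t_0)\ge 1\}$, with $\{x,y\}\in\cE_\o$, are independent with probability $1-e^{-t_0 c_{x,y}(\o)}\le 1-e^{-t_0 g(|x-y|)}=:q(|x-y|)$. Enlarging the probability space by an independent i.i.d.\ family of uniform variables $(U_{\{x,y\}})$, I couple $\cG_{t_0}(\o,\cK)$ to a larger graph $\cG'_{t_0}$ on $\hat\o$ whose edge set is $\{\{x,y\}:U_{\{x,y\}}\le q(|x-y|)\}$. Since $q$ depends only on Euclidean distance, not on the marks of $\o$, averaging out $\o$ makes $\cG'_{t_0}$ the classical random connection model on a PPP of intensity $m$ with symmetric connection function $q$.

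For the cluster bound, fix a vertex $x_0\in\hat\o$ under the Palm distribution and explore its cluster $\cC(x_0)$ in $\cG'_{t_0}$ breadth-first: let $Z_k$ denote the number of vertices at graph distance $k$ from $x_0$. By Slivnyak--Mecke the remaining PPP is Poisson of intensity $m$ and independent of $x_0$, so the new neighbors of any exposed vertex $x$, obtained by independent $q(|y-x|)$-thinning of the undiscovered region, form a point process stochastically dominated by a Poisson one of total mass $\mu:=m\int_{\bbR^d}q(|u|)\,du$. Hence $\bbE[Z_{k+1}\mid Z_k]\le \mu Z_k$ and by induction $\bbE[|\cC(x_0)|]\le\sum_{k\ge 0}\mu^k=1/(1-\mu)$ whenever $\mu<1$. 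Since $q(r)\le t_0 g(r)$ and $g(|\cdot|)\in L^1(\bbR^d,dx)$, one has $\mu\le t_0\,m\int_{\bbR^d}g(|u|)\,du$, which is $<1$ for $t_0$ sufficiently small; for such $t_0$ every vertex of $\hat\o$ lies in a finite cluster of $\cG'_{t_0}$ a.s., hence of $\cG_{t_0}\subset\cG'_{t_0}$ as well, which gives (SEP).

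The one delicate point is making the exploration step rigorous: one must argue that, at each layer, the newly discovered vertices are stochastically dominated by (not equal to) a Poisson process of intensity $m\,q(|y-x|)\,dy$ on all of $\bbR^d$, so that the recursion $\bbE[Z_{k+1}\mid Z_k]\le\mu Z_k$ propagates cleanly through the layers despite the fact that the undiscovered region shrinks along the exploration. Once this is organized carefully the argument is routine; in fact it reproduces the classical subcriticality bound $m\int q(|u|)\,du<1$ for the random connection model (see e.g.\ Meester and Roy, \emph{Continuum Percolation}), which could alternatively be cited as a black box.
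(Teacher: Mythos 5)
Your proposal is correct and follows essentially the same route as the paper: both dominate $\cG_{t_0}(\o,\cK)$ by a random connection model on the PPP with connection function bounded by $t_0\,g(|x-y|)$, choose $t_0$ so that $m\,t_0\int_{\bbR^d}g(|x|)\,dx<1$, and conclude subcriticality via the branching-process bound of Meester--Roy (which the paper cites directly as \cite[Theorem~6.1]{MR}, exactly the black-box option you mention). The exploration argument you sketch is precisely the content of that cited result, so no genuinely different idea is involved.
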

\begin{proof}
Note that   $\bbP_\o ( \cK_{x,y}(t) \geq  1)= 1- e^{-c_{x,y}(\o)t} \leq 1- \exp\{-g(|x-y|) t\}\leq  C_1 g(|x-y|)  t$ for some fixed $C_1>0$ if we take $t\leq 1$ (since $g$ is bounded).  We restrict to $t$ small enough  such that $C_1\|g\|_\infty  t <1$ and $t\leq 1$.
Consider  the random connection model \cite{MR} on a PPP with intensity $m$ where  
an edge between    $x\not =y$   is created  with probability $C_1 g(|x-y|)  t $.  
Due to  the independence of the  Poisson processes $N_{x,y}(\cdot)'s$ given $\o$,  one can couple the above random connection model with the  field $(\o, \cK)$ with law $\bbP:=\int d \cP (\o)\bbP_\o$ in a way that   the graph in the random connection model contains the graph $\cG _{t}(\o,\cK)$.
We choose $t=t_0$ small enough to have $ m C_1 t_0 \int _{\bbR^d} dx g(|x|) <1$. 
The above bound and the branching process argument in the proof of \cite[Theorem~6.1]{MR} (cf. (6.3) there) imply that   a.s. the  random connection model has only connected components with finite cardinality. Hence  the same must hold for $\cG _{t_0}(\o,\cK)$
\end{proof}
\subsection{Simple exclusion processes on infinite clusters}
For completeness we give an example associated to the random geometric structure introduced in Section \ref{virgilio}. Recall that there $\cL=(\cV,\cE)$ is the hexagonal lattice, $\O=\{0,1\}^{\cV}$, $\cP$ is a Bernoulli site percolation, $\hat \o = \cC(\o)$ is the unique   infinite percolation cluster inside $\cL$ $\cP$--a.s.  We consider the simple exclusion process on $\cC(\o)$ with $c_{x,y}(\o)=1$ if $x,y \in \cC(\o)$ and $\{x,y\}\in \cE$. The it is trivial to check that Assumptions (A1),$\dots$,(A9) and (SEP)  are all satisfied.

\verde{We now explain how Theorem \ref{teo1} improves the  hydrodynamic result given by \cite[Theorem~2.2]{F1}.
 We take $\bbG:=\bbZ^d$ and $V:=\bbI$ and define  $\bbE^d$ as in Example \ref{nonno}.
  We  take  $\O:=[0,+\infty)^{\bbE^d}$ with the product topology. The action $(\theta _x)_{x\in \bbZ^d}$  is  the standard one as in Example \ref{nonno}.   Let $\cP$ be a probability measure on $\O$ stationary,  ergodic and fulfilling (A3) for the above action. We assume that for $\cP$--a.a. $\o$ there exists a unique infinite connected component $\cC(\o)\subset \bbZ^d$ in the graph given by the edges $\{x,y\}$ in $\bbE^d$  with positive  $\o _{x,y}:=\o_{\{x,y\}}$.  We set $\hat \o:=\cC(\o)$,  $c_{x,y}(\o):=\o_{x,y}$ if $\{x,y\}$ is an edge of $\cC(\o)$ and $c_{x,y}(\o):=0$ otherwise  and assume that $\bbE[c_{0,e_i}]<+\infty$ for $i=1,2,\dots, d$.
  Then all Assumptions  (A1),...,(A9) are satisfied. If at least one of the Items (i), (ii), (iii) in Proposition \ref{rcm} is satisfied, then Assumption (SEP) is satisfied too (by the arguments in the proof of   Proposition \ref{rcm}) and Theorem \ref{teo1} applies, implying the hydrodynamic limit in path space. This result is stronger than  \cite[Theorem~2.2]{F1}, since in \cite{F1}   $c_{x,y}(\o)$ has to be  bounded uniformly in $x,y$ and $\o$, $D$ has to be strictly positive definite and the hydrodynamic limit is for a fixed time.
 }

 %Of course, one can consider more general examples of simple exclusion processes on infinite clusters also with random conductances.
%%%%%%%%%%%%%%%%%%%%%%%%%%%%%%%%%%%%%
%%%%%%%%%%%%%%%%%%%%%%%%%%%%%%%%%%%%%%%%%%%%%%%%%%%%%%%%%%
%%%%%%%%%%%%%%%%%%%%%%%%%%%%%%%%%%%%%%%%%%%%%%%%%%%%%%%%%%%%%%%%%%%%%%%%%%%%%%%%%%%%%%%%%%%%%%
%%%%%%%%%%%%%%%%%%%%%%%%%%%%%%%%%%%%%%%%%%%%%%%%%%%%%%%%%%%%%%%%%%%%%%%%%%%%%%%%%%%%%%%%%%%%%%%%%%%%%%%%%%%%%%%%%%%%%%%%%%%%%%%%%%%%%%%%%%%%%%%%%%%%%%%%%%%%%%%%%%%%%%%%%%%%%%%%%%%%%%%%%%%%%%%%%%%%%%%%%%%%%%%%%%%%%%%%%%%%%%%%%%%%%%%%%%%%%%%%%%%%%%%%%%%%%%%%%%%%%%%%%%%%%%%%%%%%%%%%%%%%%%%%%%%%%%%%%%%%%%%%%%%%%%%%%%%%%%%%%%%%%%%%%%%%%%%%%%%%%%%%%%%%%%%%%%%%%%%%%%%%%%%%%%%%%%%%%%%%%%%%%%%%%%%%%%%%%%%%%%%%%%%

\section{Random walk semigroup and resolvent  convergence by homogenizaton}\label{figlio_stress}
%%%%%%%%%%%%%%%%%%%%%%%%%%%%%%%%%%%%%%%%%%%%%%%%%%
In this  section  we recall the main results from \cite{Fhom3} which will be  used in the proof of Theorem \ref{teo1}. 
 As in Proposition \ref{prop_ergodico} we introduce the atomic measure
  \begin{equation}\label{franci}
 \mu_\o^\e :=     \sum _{ x\in \hat \o}  \e^d  \d_{\e x}\,.\end{equation}
 We also introduce the set (recall \eqref{clacson})
\begin{equation}\label{alba_chiara}
\begin{split}
\O_1:=\{\o\in \O\,:\, c_x(\o) <+\infty \; \forall x \in \hat \o\,,\;\;
  c_{x,y}(\o)=c_{y,x}(\o) \;\forall x,y \in \hat \o\}\,.
\end{split}
\end{equation}
As explained in \cite[Section~3.3]{Fhom3}, the set $\O_1$ is translation invariant and satisfies  $\cP(\O_1)=1$.
  Let us fix $\o \in  \O_1$. 
 We call    $\rosso{C_{\rm loc}}(\e \hat \o)$ the space of local  functions $f : \e \hat \o \to \bbR$ (here local means that $f$ has finite  support, \verde{i.e.~$f$ is zero outside a finite set}). 
 % As $\o\in \O_1$, it is simple to check that   $\cC(\e \hat \o) \subset L^2(\mu_\o^\e)$.  
% We also set   \be \label{norina}
% \bbL^\e_\o f(\e x) := \e^{-2} \sum_{y \in \hat \o} c_{x,y}(\o) \left( f( \e y)-f( \e x) \right)\,, \qquad x\in   \hat \o\,,\; f \in C_{\rm loc}(\e \hat \o)\,.
% \en
% Due to Lemma \ref{fringuello}, for $\o\in \tilde \O$ the r.h.s. of \eqref{norina} is an absolutely convergent series for $f\in  \in C_{\rm loc}(\e \hat \o)$ and therefore $\bbL^\e_\o f(\e x) $ is well defined.  Moreover, we have 
%(see Appendix \ref{app_passetto} for the proof)
% \be\label{miccia}
%f\in C_{\rm loc}(\e \hat \o)
% \Longrightarrow  \bbL^\e_\o f\in L^2(\mu^\e_\o) \,.
%   \en
 We define 
 \[
 \cD_\o^\e:=\bigl\{ f \in L^2(\mu_\o^\e)\,:\, 
  \sum_{x\in\hat \o} \sum _{y\in \hat \o} c_{x,y}(\o) ( f(\e y) -f (\e x) )^2 <+\infty\bigr\}
  \] and  introduce the bilinear form
  \[
  \cE_\o^\e (f,g) :=\frac{\e^{d-2} }{2}   \sum _{x\in \hat \o} \sum _{y \in \hat \o } c_{x,y}(\o)
  \bigl (f(\e  y) - f (\e x ) \bigr)\bigl ( g(\e y) - g(\e x) \bigr)
  \]
  with domain     $\cD_\o ^\e$.     
 Since  $\o \in \O_1$ it holds  $\rosso{C_{\rm loc}} (\e \hat \o) \subset \cD _\o ^\e$, as explained in \cite[Section~3.3]{Fhom3}.
We call $\cD_{\o,*}^\e   $ the closure of $\rosso{C_{\rm loc}} (\e \hat \o)$ w.r.t. the norm $\|f\|_{L^2(\mu^\e_\o)} +  \cE_\o^\e (f,f)^{1/2}$.
  Then, as stated in  \cite[Example~1.2.5]{FOT}, the bilinear form $ \cE_\o^\e $ restricted to $\cD_{\o,*}^\e   $ 
    is a regular Dirichlet form.  In particular, there exists a unique nonpositive  self-adjoint operator  $\bbL^\e_\o$   in $L^2(\mu^\e_\o)$    such  that $\cD_{\o,*}^\e   $   equals the domain of $\sqrt{-{\bbL}^\e_\o}$ and  $ \cE_\o^\e (f,f) = \|\sqrt{-\bbL^\e_\o} f\|^2_{L^2(\mu^\e_\o)}$ for any $f\in \cD_{\o,*}^\e   $  (see
 \cite[Theorem~1.3.1]{FOT}). Due to \cite[Lemma~1.3.2 and Exercise~4.4.1]{FOT},  $\bbL^\e_\o$ is the infinitesimal generator of 
 the strongly continuous Markov semigroup  $(P^\e _{\o ,t} )_{t \geq 0}$ on  $L^2(\mu^\e _\o)$ associated to the random walk $(\e X^\o _{ \e^{-2} t} )_{t\geq 0}$ on $\e \hat \o$ 
 defined in terms of holding times and jump probabilities (see Lemma  \ref{lemma_no_TNT}).
 Hence,  $P^\e _{\o ,t} f(x) = E_x\bigl[  f(\e X^\o _{ \e^{-2} t}) \bigr]$ for  $f\in L^2(\mu^\e _\o)$ and $x\in \e\hat \o$, $E_x$ denoting the expectation when the random walk starts at $x$.
    For completeness, although not used below, we report that (using that $\o \in \O_1$) one can check that 
    $\rosso{C_{\rm loc}} (\e \hat \o)\subset \cD( \bbL^\e_\o)$ and that 
    $\bbL^\e_\o f(\e x) = \rosso{\e^{-2}}\sum_{y \in \hat \o} c_{x,y}(\o) \left( f(\e y)-f(\e x) \right)$ for all $x \in  \hat \o,\; \forall  f\in\rosso{C_{\rm loc}}(\e \hat \o)$
    (the series in the r.h.s. is well defined   being  absolutely convergent).

 We recall that we write $( P_t )_{t \geq 0} $ for the  Markov semigroup  associated to the    Brownian motion $(B_t)_{t\geq 0}$ on $\bbR^d$  with diffusion matrix $2 D$ given in Definition \ref{def_D} (strictly speaking it would be natural here to  refer  to the semigroup on $L^2(mdx)$ but $P_t$ will be applied below to bounded functions, hence one can keep the same definition of $P_t$ as for Theorem \ref{teo1}).  
%Recall that we  write $(P^\e _{\o ,t} )_{t \geq 0}$ for the $L^2(\mu^\e _\o)$--Markov semigroup associated to the random walk $(\e X^\o _{ \e^{-2} t} )_{t\geq 0}$ on $\e \hat \o$. In particular, $P^\e _{\o ,t}=e^{t \bbL^\e _\o }$.
 Given $\l>0$ we write  $R^\e _{\o ,\l}: L^2(\mu^\e _\o)\to L^2(\mu^\e _\o) $ for the resolvent  associated to the random walk $\e X^\o _{ \e^{-2} t} $, i.e.  $R^\e _{\o ,\l}:= (\l -\bbL^\e _\o )^{-1} =\int_0^\infty e^{- \l s} P^\e _{\o ,s} ds $.    We write $ R_\l : L^2(m dx) \to L^2(m dx)$ for the resolvent associated to the above Brownian motion  $(B_t)_{t\geq 0}$.
  %%%%%%%%%%% Fact replay

\begin{Proposition}\label{replay}\cite[\verde{Theorem 4.4}]{Fhom3}  Let Assumptions (A1),...,(A9)  be satisfied. 
Then there exists a translation invariant measurable 
  set $\O_\sharp\subset \O $ with  $\cP( \O_\sharp)=1$  such that for any $ \o \in \O_\sharp $, any $f \in C_c(\bbR^d)$,  $\l>0$, $t\geq 0$ it holds:
\begin{align}
%& L^2(\mu^\e _\o ) \ni P^\e_{\o,t} f \to P_t f \in L^2(mdx)\,, \label{marvel0}\\
& \lim_{\e \da 0} \int \bigl | P^\e_{\o,t} f(x) - P_t f (x) \bigr| d \mu^\e_\o (x)=0\,.\label{marvel2}\\
&   \lim_{\e \da 0} \int \bigl | R^\e_{\o,\l} f(x) -R_\l f (x) \bigr| d \mu^\e_\o (x)=0\,.\label{flavia}
\end{align}
\end{Proposition}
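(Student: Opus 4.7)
The plan is to prove both convergences by homogenizing the Dirichlet forms $\cE^\e_\o$ toward the limit form $\cE_{\mathrm{hom}}(u,u) := \int_{\bbR^d} \nabla u(x) \cdot D \nabla u(x)\, m\,dx$, and then deducing resolvent and semigroup convergence via the Kuwae--Shioya framework of varying Hilbert spaces. The starting observation is that, applying Proposition \ref{prop_ergodico} with $f \equiv 1$, the measures $\mu^\e_\o$ converge vaguely to $m\,dx$ on a translation--invariant set of full $\cP$--measure; this fixes the varying $L^2$ geometry. On top of it I would use \emph{two--scale convergence}: $u_\e \in L^2(\mu^\e_\o)$ two--scale converges to $u \in L^2(\bbR^d \times \O_0, m\,dx \otimes \cP_0)$ if
\[
\int u_\e(x) \varphi(x) \psi(\theta_{g(x/\e)}\o)\, d\mu^\e_\o(x) \;\longrightarrow\; m \int_{\bbR^d}\! \int_{\O_0} u(x,\o') \varphi(x) \psi(\o')\, dx\, d\cP_0(\o')
\]
for all $\varphi \in C_c(\bbR^d)$ and bounded measurable $\psi$ on $\O_0$. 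The separability assumption (A8) is what allows selection of a single full--measure $\cP$--set on which all such test convergences hold simultaneously via Proposition \ref{prop_ergodico}.

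Next I would prove the crucial compactness: a family uniformly bounded in $L^2(\mu^\e_\o)$ admits a two--scale convergent subsequence, and if additionally $\cE^\e_\o(u_\e,u_\e)$ is uniformly bounded, the limit is independent of $\o'$ and belongs to a Sobolev--type space $H^1_D(\bbR^d, m\,dx)$ associated to $D$. Identifying the limit Dirichlet form as $\cE_{\mathrm{hom}}$ is the core step: one constructs approximate \emph{correctors}, i.e.~functions $f_a \in L^\infty(\cP_0)$ nearly attaining the infimum in \eqref{def_D_R} (or \eqref{def_D_Z}) for each $a\in\bbR^d$, and uses them to twist microscopic discrete gradients on $\e\hat\o$ into a macroscopic gradient plus a mean--zero fluctuation that vanishes two--scale. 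Assumption (A7) ensures finite corrector energy, (A9) provides the uniform local bounds $\sup_\e \mu^\e_\o(K) < \infty$ and the variance control needed to transfer weak two--scale limits into $L^1$--strong statements, while (A5)--(A6) make the underlying quadratic form symmetric and irreducible in the non--kernel directions of $D$.

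Once Mosco--type convergence $\cE^\e_\o \to \cE_{\mathrm{hom}}$ is in place, the resolvent convergence \eqref{flavia} follows by the standard variational argument: for $f\in C_c(\bbR^d)$, $u_\e := R^\e_{\o,\l} f$ uniquely minimizes $\tfrac{1}{2}\cE^\e_\o(v,v) + \tfrac{\l}{2}\|v\|^2_{L^2(\mu^\e_\o)} - \la f, v\ra_{\mu^\e_\o}$. The bounds $\|u_\e\|_{L^2(\mu^\e_\o)}\le \l^{-1}\|f\|_\infty$ and $\cE^\e_\o(u_\e,u_\e) \le 2\l^{-1}\|f\|_\infty^2\,\mu^\e_\o(\mathrm{supp}\,f)$ give two--scale compactness, and Mosco convergence forces any limit point to equal $R_\l f$. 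Strong $L^1(\mu^\e_\o)$ convergence on compacts follows from matching of energies in the variational problem (the standard $\Gamma$--convergence consequence for quadratic forms); upgrading to $\bbR^d$ uses $\|R^\e_{\o,\l} f\|_\infty \le \l^{-1}\|f\|_\infty$, the local density bound from (A9), and decay of $R_\l f$. The semigroup convergence \eqref{marvel2} is then extracted from resolvent convergence by the Trotter--Kato theorem in the Kuwae--Shioya framework of convergent Hilbert spaces.

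The hard part will be the construction and control of the correctors in this very general setting: the environment is merely stationary and ergodic, so correctors live only in an abstract $L^2(\cP_0)$ and not in any smoothness class; the jump range is unbounded, so the truncation of long jumps must be finely compatible with (A7); and $D$ may be degenerate, forcing the limit dynamics to freeze the kernel directions of $D$ --- exactly the reason Theorem \ref{teo1} lets the Brownian motion be nontrivial only on $\mathrm{span}(\mathfrak{e}_1,\ldots,\mathfrak{e}_{d_*})$. Reconciling these three features simultaneously, and doing so at the level of the full path space of Radon measures rather than on a nice lattice, is the essential technical load of \cite{Fhom3} and what makes Proposition \ref{replay} the analytic engine behind the hydrodynamic limit.
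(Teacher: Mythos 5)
The first thing to say is that the paper you are comparing against does not actually prove Proposition \ref{replay}: it is imported verbatim from \cite[Theorem 2]{Fhom3}, and Section \ref{figlio_stress} only records the statement and the remark that Assumption (A9) enters solely through the tightness estimate \eqref{claudio2}. So the only meaningful comparison is with the announced methodology of that companion work, which the introduction describes as a homogenization analysis ``based on 2-scale convergence.'' Measured against that, your outline is on target: two-scale convergence relative to the varying measures $\mu^\e_\o$ with the Palm distribution $\cP_0$ as the fast variable, correctors realizing the variational formula \eqref{def_D_R}--\eqref{def_D_Z}, Mosco-type convergence of the Dirichlet forms $\cE^\e_\o$, and resolvent-to-semigroup passage in the Kuwae--Shioya framework is exactly the architecture one expects. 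You also correctly read off the roles of the hypotheses: (A8) to extract a single translation-invariant null set via Proposition \ref{prop_ergodico}, (A7) for finite corrector energy, and (A9) for the control of mass at infinity needed to upgrade local $L^2$ statements to the global $L^1(\mu^\e_\o)$ convergence in \eqref{marvel2}--\eqref{flavia}.

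That said, as a proof your text is a roadmap, not an argument: every step that carries weight --- the two-scale compactness theorem in this amorphous setting, the identification of the limit form (in particular when $D$ is degenerate, where the limit space is not a classical $H^1$ and the infimum in \eqref{def_D_R} need not be attained), the construction of the approximate correctors compatible with unbounded jump range, and the deduction of strong convergence from energy matching --- is named but deferred, with the deferral explicitly delegated back to \cite{Fhom3}. Two smaller imprecisions: the a priori bound on $u_\e = R^\e_{\o,\l}f$ should be $\|u_\e\|_{L^2(\mu^\e_\o)} \le \l^{-1}\|f\|_{L^2(\mu^\e_\o)}$ (your $\l^{-1}\|f\|_\infty$ controls the sup norm, not the $L^2$ norm, though both bounds are true and both are used); and the claim that a bounded-energy sequence has a two-scale limit independent of $\o'$ requires an ergodicity/sublinearity argument that is one of the genuinely delicate points when $\bbG=\bbZ^d$ and $V\ne\bbI$. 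In short: right strategy, consistent with the cited source, but not a self-contained proof.
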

\begin{Remark}\label{profumino} As stated in \cite[Remark~4.2]{Fhom3}   Assumption (A9) is used in  \cite{Fhom3} only to prove for  $\cP$--a.a. $\o$  that 
\be \label{claudio2}
 \lim _{\ell \uparrow \infty} \varlimsup_{\e \da 0} \int d \mu^\e _\o (x)\psi(|x|)  \mathds{1}_{\{ |x| \geq \ell\}}=0\,,
 \qquad \psi(r):=1/(1+ r^{d+1})
 \,. \en 
 Hence, in Theorem \ref{teo1} one could replace (A9) by any other condition leading to the above property \eqref{claudio2} $\cP$--a.s.
 \end{Remark} For later use, we also point out that 
   the $\o$'s satisfying \eqref{claudio2} form  a translation invariant measurable set.

%%%%%%%%%%%%%%%%%%%%%%%%%%%%%%%%%%%%%%%%%%%%
\section{Graphical construction of the simple exclusion process}\label{sec_GC}
%In this section,   under Assumption (SEP), we describe the universal graphical construction of the exclusion process on $\hat \o$ and discuss its main properties as Feller process.
Let $t_0=t_0(\o)$ be as in Assumption (SEP) \verde{in Section \ref{HL_EP}}.  Recall definition \eqref{pitagora} of $\cE_\o$.
 \begin{Definition}[Property $(P_r)$] \label{def_pr} Given $r\in \bbN$ we say that the pair $(\o, \cK)\in \O\times  D(\bbR_+,\bbN)^{\cE_\o}$ satisfies property $(P_r)$ if 
 the 
 undirected  graph $\cG ^r_{t_0}(\o,\cK)$ with vertex set $\hat \o$ and edge set  $\{ \{x,y\}
 \in \cE_\o 
 \,:\,  \cK_{x,y}((r+1) t_0)> \cK_{x,y} (r t_0) \}$ has only connected components with finite 
cardinality.
 \end{Definition}
 
 \rosso{Recall definition \eqref{clacson} of $c_x(\o)$}.
 \begin{Definition}[Set $\tilde \O$] \label{omesso} The set $\tilde \O$ is given by the elements $\o\in \O$ such that   $c_x (\o)<+\infty \;\forall x \in \hat \o$  and such that the properties in Assumptions (A4) and (A5) are fulfilled  (namely, \eqref{base}, \eqref{montagna}, \eqref{pietra} hold for all $x,y,g$).
\end{Definition}
As already pointed out in Section \ref{figlio_stress}, \verde{the set} $\O_1$ \verde{defined in \eqref{alba_chiara}}  is a translation invariant  set and  $\cP(\O_1)=1$. It is trivial to check  that the same holds for $\tilde \O\subset \O_1$.

\begin{Definition}[Sets $\bbK_\o$, $\O_*$] \label{def_omega_*}
Fixed $\o\in \O$, $\bbK_\o$ is the set  given by the elements  $\cK\in D(\bbR_+,\bbN)^{\cE_\o}$ such that 
\begin{itemize}
\item[(i)] $(\o,\cK)$ satisfies  property $(P_r)$ for  all $r\in \bbN$;
\item[(ii)] the jump time sets $\{ t>0\,: \cK_{x,y}(t-)\not = \cK_{x,y} (t) \}$ are disjoint as $\{x,y\}$ varies among $\cE_\o$;
\item[(iii)]  $\cK_x(t):=\sum_{y: \{x,y\}\in \cE_\o} \cK_{x,y}(t) <+\infty$ for all $x\in \hat \o$ and $t\geq 0$.
\end{itemize}
We define $\O_*$ as the set of $\o \in \tilde \O$  such that $\bbP_\o( \bbK_\o)=1$.\end{Definition}

Since $\cP(\tilde \O)=1$ and by  the loss of memory of the Poisson point process, we have that  $\cP(\O_*)=1$. It is simple to check that $\O_*$ is translation invariant. 
%
%\begin{Lemma}\label{sdraio} It holds $\cP(\O_*)=1$.   \end{Lemma}
%\begin{proof}
%Call $\O'$ the set of $\o \in \O$ such that property $(P_0)$ holds $\bbP_\o$ a.s.. By Assumption (SEP) we have $\cP(\O')=1$. By the loss of memory of the Poisson point process, given $\o\in\O'$,   property $(P_r)$ holds $\bbP_\o$ a.s. for all $r\in \bbN$. Hence, $\O_*=\tilde \O\cap \O'$. To conclude recall that $\cP(\tilde \O)=1$ (cf. Lemma \ref{fringuello}). 
%\end{proof}

Also for later use,  we now recall
 the graphical construction of the simple  exclusion process. To this aim it is convenient to think the  simple exclusion process as an exchange process.

Let us fix $\o \in \O_*$ and $\cK\in \bbK_\o$.
  Given a particle configuration  $\xi \in \{0,1\}^{\hat \o}$ we now  define  a deterministic  trajectory $(\eta^\xi_t[\cK])_{t \geq 0 }$  in $D(\bbR_+,\{0,1\}^{\hat \o} )$ and starting at $\xi$ by an iterative procedure.  We set 
  $\eta^\xi_0[ \cK]:=\xi$. 
   Suppose  that the deterministic  trajectory has been defined up to time $r t_0$, $r\in \bbN$ (note that for $r=0$ this follows from our definition of $\eta^\xi_0[ \cK]$).
   As $\cK \in \bbK_\o$   all connected components of $\cG^r_{t_0}(\o,\cK)$ have finite cardinality. 
    Let $\cC$ be 
    such a connected component and let
 \begin{multline*}
  \{s_1<s_2< \cdots <s_k\} =\\
  \bigl \{s \,: \cK_{x,y}(s) = \cK_{x,y}(s-)+1\,, \;\{x,y\} \text{ bond in } \cC, \; r t_0  <s \leq (r+1) t_0\bigr\}\,.
 \end{multline*}
As $\cK\in \bbK_\o$, the l.h.s. is indeed  a finite set. The local evolution $\eta^\xi _t[ \cK](z) $ with $z \in \cC$ and  $r t_0 < t  \leq (r+1) t_0$ is described as follows. 
 Start with $\eta^\xi_{rt_0}[ \cK]$ as configuration at time $r t_0$ in $\cC$. At time $s_1$ exchange the values between $\eta(x)$ and $\eta(y)$ if $\cK_{x,y}(s_1)= \cK _{x,y}(s_1-)+1$ and $\{x,y\}$ is an edge  in $\cC$ (there is exactly one such edge as $\cK \in \bbK_\o$).  Repeat the same operation orderly for times $s_2,s_3, \dots , s_k$. Then move to another connected component of  $\cG_{t_0}^r(\o,\cK)$ and repeat the above construction and so on. As 
the connected components are disjoint, the resulting path does not 
depend on the order by which we choose the connected components in the above algorithm.
   This procedure defines $ \eta^\xi _t[\cK]_{ r t_0< t \leq (r+1) t_0}$.
  Starting with $r=0$ and progressively increasing $r$ by $1$ we get the trajectory  $ \eta^\xi_t[\cK]_{t\geq 0}$.

  \smallskip
  
  We recall that  $C(\{0,1\}^{\hat \o})$ is  the space of continuous functions on $\{0,1\}^{\hat \o}$ \verde{endowed with the uniform topology}.
 %    We recall that a function $f:\{0,1\}^{\hat \o}\to \bbR$ is called \emph{local} if $f(\eta)$ depends on $\eta$ only through $\eta(x)$ with $x$ varying among a finite set. 
         Given $\o\in \O_*$ we consider 
the probability space  $( \bbK_\o, \bbP_\o)$, and write $\bbE_\o$ for the associated expectation.  We set
\[
S(t) f (\xi) := \bbE_\o  [ f( \eta^\xi _t [\cK])]\,, \qquad t\geq 0\,,\; f \in C(\{0,1\}^{\hat \o})\,.
\] 
%For the next result recall  Lemma \ref{fringuello}--(ii) and that $\O_* \subset \tilde \O$. 
% $\cL_\o f$  is well defined on local functions as $\o \in \O_*$ (see Item (a) in Definition \ref{def_omega_*}).

  \begin{Proposition}\label{prop_SEP} Take $\o \in \O_*$ and 
  fix  $\xi \in \{0,1\}^{\hat \o}$. Then the random trajectory  $\bigl(  \eta^\xi_t [\cK]\bigr)_{t\geq 0}$ with $\cK$ sampled in the probability space $( \bbK_\o, \bbP_\o)$ belongs to the \rosso{Skorohod} space $D( \bbR_+, \{0,1\}^{\hat \o})$ and it starts at $\xi$. It  describes   a Feller process, called simple exclusion process.  In particular, $( S(t) )_{t\geq 0}$ is a Markov semigroup on $C(\{0,1\}^{\hat \o})$.  Moreover, the domain of its  infinitesimal generator $\cL_\o$ contains the family  of local functions  and for any local function $f$ the function $\cL_\o f$ is given by the right hand sides  of  \eqref{mammaE} and \eqref{mahmood},  which are  absolutely convergent series in  \verde{$C(\{0,1\}^{\hat \o})$}.
    \end{Proposition}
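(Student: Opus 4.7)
The plan is to verify, in order, the four assertions of the proposition by leveraging the finite-cluster structure provided by Assumption (SEP) and the definition of $\bbK_\o$.

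\textbf{Path regularity and starting condition.} By construction $\eta^\xi_0[\cK]=\xi$, so it remains to show that $t\mapsto \eta^\xi_t[\cK]$ lies in $D(\bbR_+,\{0,1\}^{\hat\o})$. Since $\{0,1\}^{\hat\o}$ carries the product topology, it suffices to prove that for each fixed $x\in\hat\o$ the map $t\mapsto \eta^\xi_t[\cK](x)$ is c\`adl\`ag. For any $r\in\bbN$ the site $x$ belongs to a unique connected component $\cC$ of $\cG^r_{t_0}(\o,\cK)$, which has finite cardinality since $\cK\in\bbK_\o$; by properties (ii)--(iii) of $\bbK_\o$, only finitely many jump times $s_1<\dots<s_k$ in $(rt_0,(r+1)t_0]$ touch $\cC$, so $t\mapsto\eta^\xi_t[\cK](x)$ is piecewise constant with finitely many jumps in each interval $(rt_0,(r+1)t_0]$, hence c\`adl\`ag on $\bbR_+$.

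\textbf{Semigroup and Markov property.} The key observation is that the construction on $(rt_0,(r+1)t_0]$ uses only $\cK$ restricted to that window, and by the memoryless property of Poisson processes these restrictions are i.i.d. under $\bbP_\o$. One then checks that $\cK\mapsto\eta^\xi_\cdot[\cK]$ is measurable (it is built by finitely many elementary operations on each finite cluster), so $S(t)$ maps bounded measurable functions to bounded measurable functions. The semigroup property $S(t+s)=S(t)S(s)$ follows from the ``restart'' identity $\eta^\xi_{t+s}[\cK]=\eta^{\eta^\xi_s[\cK]}_t[\vartheta_s\cK]$, where $\vartheta_s$ is the time shift, combined with the independence of $\vartheta_s\cK$ from $\cK|_{[0,s]}$ under $\bbP_\o$; this gives both Markovianity and the semigroup equation.

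\textbf{Feller property.} For a local $f$ with support $F\subset\hat\o$, the value $S(t)f(\xi)$ depends on $\xi$ only through its restriction to the (a.s.~finite) random set of sites that can be connected to $F$ by a chain of Poisson marks arriving in $[0,t]$; in particular $\xi\mapsto S(t)f(\xi)$ is continuous. For general $f\in C(\{0,1\}^{\hat\o})$ the compactness of $\{0,1\}^{\hat\o}$ yields uniform approximation by local functions, and $\|S(t)\|_\infty\leq 1$ gives $S(t)f\in C(\{0,1\}^{\hat\o})$. Strong continuity at $t=0$ is shown first for local $f$: the probability that some exchange alters $\eta(x)$ for $x\in F$ within time $t$ is bounded by
\begin{equation*}
\bbP_\o\Big(\sum_{x\in F,\,y\in\hat\o:\{x,y\}\in\cE_\o}\cK_{x,y}(t)\geq 1\Big)\leq t\sum_{x\in F}c_x(\o)\xrightarrow{t\da 0}0,
\end{equation*}
uniformly in $\xi$, using $c_x(\o)<+\infty$ since $\o\in\O_*\subset\O_1$. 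Density then extends strong continuity to all of $C(\{0,1\}^{\hat\o})$.

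\textbf{Generator identification.} For a local $f$ with support $F$, a first-order expansion together with the above estimate yields
\begin{equation*}
S(t)f(\xi)-f(\xi)=\!\!\!\sum_{\{x,y\}\in\cE_\o}\!\!\!\bbP_\o\bigl(\cK_{x,y}(t)=1,\,\cK_{x',y'}(t)=0\ \forall\{x',y'\}\neq\{x,y\}\bigr)\bigl[f(\xi^{x,y})-f(\xi)\bigr]+o(t),
\end{equation*}
and only pairs $\{x,y\}$ meeting $F$ contribute nontrivially, so the surviving sum is finite, equals $t\cL_\o f(\xi)+o(t)$ with $\cL_\o$ as in \eqref{mahmood}, and the limit $t\da 0$ is uniform in $\xi$. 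Absolute convergence of the series in \eqref{mammaE}--\eqref{mahmood} is immediate from locality of $f$ and $c_x(\o)<+\infty$ for all $x\in\hat\o$. The main delicate point I expect is making the Feller strong continuity in the third step precise with uniform control in $\xi$ despite the possibly unbounded jump range; this is precisely where Assumption (SEP) and the $\bbK_\o$-properties are indispensable, since they exclude both explosion and the accumulation of exchanges in any finite time window.
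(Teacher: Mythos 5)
Your first three steps (path regularity, construction of the semigroup via the restart identity, Feller property and strong continuity) are correct and are exactly the standard arguments the paper itself defers to the literature; in particular your uniform estimate $\bbP_\o(\sum_{x\in F,y}\cK_{x,y}(t)\geq 1)\leq t\sum_{x\in F}c_x(\o)$ for strong continuity is complete as written. The gap is in the generator identification, which is precisely the one step the paper singles out as non-standard and proves in Appendix \ref{app_localino}. As displayed, your first-order expansion conditions on $\cK_{x',y'}(t)=0$ for \emph{all} other pairs in $\cE_\o$; since the total rate $\sum_{\{x,y\}\in\cE_\o}c_{x,y}(\o)=\frac12\sum_{x\in\hat\o}c_x(\o)$ is generically infinite, every such probability vanishes and the identity is vacuous. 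If instead you restrict the ``no other mark'' condition to the edges meeting the support $F$ (which is what you must mean, since you then say only those edges contribute), the identity is no longer immediate: on the event that the unique mark among $\cE_F$ sits on $\{x_0,y_0\}$ with $y_0\notin F$, the site $y_0$ may receive further marks from edges \emph{not} meeting $F$ before time $t$, in which case the graphical construction gives $f(\eta^\xi_t[\cK])=f(\xi^{x_0,z})$ for some $z\neq y_0$ rather than $f(\xi^{x_0,y_0})$. One must show this event has probability $o(t)$; this is Claim \ref{violaceo} in the paper, and its proof is not a crude union bound but a dominated-convergence argument on $t\sum_{x\in A}\sum_{y}c_{x,y}(\o)e^{-c_{x,y}(\o)t}(1-e^{-c_y(\o)t})$, using $\sum_{x\in A}c_x(\o)<\infty$ and $c_y(\o)<\infty$ for each $y$. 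Without this step the expansion ``$=t\cL_\o f(\xi)+o(t)$'' is unjustified, and it is exactly here (not in the Feller strong continuity, where you locate the difficulty) that the possibly unbounded jump range bites.
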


 The above proposition can be derived by    the  standard arguments used for the graphical construction of the SEP usually presented under the assumption of   finite range jumps (see  e.g. \cite[Section 2.1]{timo}).  %Referring to \cite{timo}, t
 The only exception is given by the 
derivation of the identities  \eqref{mammaE} and \eqref{mahmood} for local functions, due to possible unbounded jump range.
  We refer to Appendix \ref{app_localino} for the proof of \eqref{mammaE} and \eqref{mahmood}.

%%%%%%%%%%%%%%%%%%%%%%%%%%%%%%%%%%%%%%%%%%%%  
\section{Duality}\label{dualino}  
In order to prove the tightness of the empirical measure by means of the corrected empirical one, we need to deal with non local functions on $\{0,1\}^{\hat \o}$. In  what follows we collect the extended results concerning  $\cL_\o$ and Dynkin martingales, which will be used in Section \ref{sec_teso}. Recall \eqref{clacson}.

In all this section we restrict to $\o\in \O_*$  (cf.~Definition \ref{def_omega_*}).
 \begin{Definition}\label{birillino}
Given a function $u : \e \hat \o \to \bbR$ such that $\sum _{x\in \hat \o} c_x (\o) |u(\e x) |<+\infty$, we define 
$\tilde{\bbL}^\e_\o  u(x) :=\e^{-2} \sum _{y\in \hat\o} c_{x,y} (\o) ( u(\e y) - u(\e x) )$.\end{Definition}
 By symmetry of the jump rates we have
 \be\label{fischio0}
\sum _{x\in \hat \o} \sum _{y\in \hat \o} c_{x,y} (\o) (| u(\e x)| + |u(\e y) |) =2 \sum _{x\in \hat \o} c_x(\o) | u(\e x) |\,.
\en
 Hence, if $\sum _{x\in \hat \o } c_x (\o) |u(\e x) |<+\infty$, the series defining $\tilde \bbL^\e_\o  u(x)$ is 
    absolutely convergent for any $x\in \hat \o$.

\medskip

In what follows, to simplify the notation, we write $\pi^\e_\o(u)$, $\pi^\e_{\o,t}(u)$
for the integral of $u$ w.r.t.  $\pi^\e_\o[\eta]$, $\pi^\e_{\o,t}[\eta]$, respectively. Recall that $\bbL^\e_\o$ is the Markov generator in $L^2(\mu^\e_\o)$ of the random walk $(\e X_{\e^{-2} t}^\o)_{t\geq 0}$ (see Section \ref{figlio_stress}). \rosso{Recall that $\cL_\o$ is the Markov generator  of the simple exclusion process in  the function space $C(\{0,1\}^{\hat \o})$ of continuous functions on $\{0,1\}^{\hat\o}$ endowed with the uniform topology (see Proposition \ref{prop_SEP}).} 
 We now state two lemmas which will be crucial when dealing with the  corrected empirical measure.  We postpone their proofs to the end of the section. 
\begin{Lemma}[Duality] \label{ringo}
Suppose that $u : \e \hat \o \to \bbR$ satisfies 
\be \label{norma}
\sum_{x\in \hat \o }  |u(\e x)| <+\infty \text{ and }
\sum_{x\in \hat \o } c_x (\o) |u(\e x)| <+\infty\,.
\en
 Then $ \pi^{\e} _{\o}(u) =\e^d\sum_{x\in \hat \o } u(\e x) \eta(x)$ is an absolutely convergent series in  $ C(\{0,1\}^{\hat \o})$. It belongs to the domain  $\cD(\cL_\o) \subset C(\{0,1\}^{\hat \o})$ of $\cL_\o$ and   
\be\label{airone25}  \cL _\o \left(  \pi^\e_\o (u)\right)=
 \e^{d+2} \sum _{x\in \hat \o}  \eta(x) \tilde \bbL^\e_\o u (\e x) \,,
 \en 
 the r.h.s. of \eqref{airone25} being an absolutely convergent series in $C(\{0,1\}^{\hat \o})$.
 If, in addition to \eqref{norma}, it holds  $u \in \cD( \bbL^\e_\o )\subset L^2(\mu^\e_\o)$, then   $\bbL^\e_\o u = \tilde \bbL^\e_\o u $ and in particular we have the duality relation 
\be\label{airone26}  \cL _\o \left(  \pi^\e_\o (u)\right)=
 \e^{d+2} \sum _{x\in \hat \o}  \eta(x)  \bbL^\e_\o u (\e x) \,.
 \en 
 \end{Lemma}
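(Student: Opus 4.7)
The plan is to split the argument into three steps: (i) establish that $\pi^\e_\o(u)$ and the r.h.s.\ of \eqref{airone25} are well-defined elements of $C(\{0,1\}^{\hat\o})$; (ii) approximate $u$ by finitely supported truncations to place $\pi^\e_\o(u)$ in $\cD(\cL_\o)$ via closedness of $\cL_\o$ and verify \eqref{airone25}; (iii) identify $\bbL^\e_\o u$ with $\tilde\bbL^\e_\o u$ by testing against local functions.

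For step (i), the uniform bound $|u(\e x)\eta(x)|\leq |u(\e x)|$ combined with the first hypothesis in \eqref{norma} gives uniform absolute convergence of the series defining $\pi^\e_\o(u)$ on $\{0,1\}^{\hat\o}$, hence continuity in the product topology. For the r.h.s.\ of \eqref{airone25}, the elementary estimate
\[
|\tilde\bbL^\e_\o u(\e x)|\leq \e^{-2}\Big[\sum_{y\in\hat\o} c_{x,y}(\o)|u(\e y)|+c_x(\o)|u(\e x)|\Big]
\]
together with Fubini and the symmetry of the rates gives $\sum_{x\in\hat\o}|\tilde\bbL^\e_\o u(\e x)|\leq 2\e^{-2}\sum_{x\in\hat\o} c_x(\o)|u(\e x)|$, which is finite by the second hypothesis in \eqref{norma}; this again yields uniform absolute convergence and continuity of the r.h.s.

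For step (ii), I would fix an exhausting sequence of finite sets $A_n\uparrow\hat\o$ and set $u_n:=u\cdot\mathds{1}_{A_n}$, so that $\pi^\e_\o(u_n)$ is a local function and hence in $\cD(\cL_\o)$ by Proposition \ref{prop_SEP}. A short symmetrization of \eqref{mahmood} using $c_{x,y}=c_{y,x}$ rewrites $\cL_\o\pi^\e_\o(u_n)$ as $\e^{d+2}\sum_x\eta(x)\tilde\bbL^\e_\o u_n(\e x)$. Dominated convergence, with majorants $|u(\e x)|$ and $2\e^{-2}c_x(\o)|u(\e x)|$ respectively, yields uniform convergence $\pi^\e_\o(u_n)\to\pi^\e_\o(u)$ and $\cL_\o\pi^\e_\o(u_n)\to \e^{d+2}\sum_x\eta(x)\tilde\bbL^\e_\o u(\e x)$. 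Closedness of the Feller generator $\cL_\o$ then gives $\pi^\e_\o(u)\in\cD(\cL_\o)$ together with \eqref{airone25}.

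For step (iii), when $u\in\cD(\bbL^\e_\o)$ I test against $v=\mathds{1}_{\{\e x_0\}}$ for $x_0\in\hat\o$. By self-adjointness of $\bbL^\e_\o$ and the pointwise identity $\bbL^\e_\o v=\tilde\bbL^\e_\o v$ on local functions recalled in Section \ref{figlio_stress}, one has $\langle \bbL^\e_\o u,v\rangle_{L^2(\mu^\e_\o)}=\langle u,\tilde\bbL^\e_\o v\rangle$, while the same Fubini/symmetrization as in step (i) turns the r.h.s.\ into $\langle\tilde\bbL^\e_\o u,v\rangle$. Varying $x_0$ yields $\bbL^\e_\o u=\tilde\bbL^\e_\o u$ pointwise on $\e\hat\o$, and \eqref{airone26} follows from \eqref{airone25}. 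The main technical obstacle throughout is the bookkeeping forced by the possibly unbounded jump range: every interchange of sums and every passage to the limit relies crucially on the two summability hypotheses in \eqref{norma}, while the conceptual inputs are mild — symmetry of the rates, closedness of $\cL_\o$, and the local-function formula from Proposition \ref{prop_SEP}.
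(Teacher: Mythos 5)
Your steps (i) and (ii) coincide with the paper's argument: the same symmetry bound $\sum_x\sum_y c_{x,y}(\o)(|u(\e x)|+|u(\e y)|)=2\sum_x c_x(\o)|u(\e x)|$ gives absolute convergence of both series, and the same truncation-plus-closedness scheme (the paper truncates to balls $\{|\e x|\le n\}$ rather than a general exhaustion, which is immaterial) yields $\pi^\e_\o(u)\in\cD(\cL_\o)$ and \eqref{airone25}. Your step (iii), however, takes a genuinely different route. The paper proves $\bbL^\e_\o u=\tilde\bbL^\e_\o u$ by reading the already-established identity \eqref{airone25} as a statement about the uniform-norm derivative of the exclusion semigroup at $t=0$, evaluating it on single-particle configurations (so that $S(t)\pi^\e_\o(u)$ becomes $\e^d E_{x_0}[u(\e X^\o_t)]$ and the derivative is $\tilde\bbL^\e_\o u(\e x_0)$), and comparing with the $L^2$-derivative, which exists and equals $\bbL^\e_\o u$ since $u\in\cD(\bbL^\e_\o)$; pointwise and $L^2$ limits must agree at the atoms of $\mu^\e_\o$. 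You instead use self-adjointness of $\bbL^\e_\o$ and duality against $v=\mathds{1}_{\{\e x_0\}}$. Your computation is correct — the Fubini step needs only $\sum_x c_{x,x_0}(\o)|u(\e x)|\le\sum_x c_x(\o)|u(\e x)|<+\infty$ from \eqref{norma} and $c_{x_0}(\o)<+\infty$ — but note that it leans on the claim that local functions lie in $\cD(\bbL^\e_\o)$ with $\bbL^\e_\o v=\tilde\bbL^\e_\o v$, a fact the paper states in Section \ref{figlio_stress} only ``for completeness, although not used below''. So for a self-contained argument you would have to verify that claim within the Dirichlet-form framework (checking $\tilde\bbL^\e_\o v\in L^2(\mu^\e_\o)$ and the integration by parts against the form domain); the paper's detour through the single-particle exclusion process is designed precisely to avoid this. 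What your route buys is a shorter, purely Hilbert-space argument; what the paper's buys is independence from the domain characterization of $\bbL^\e_\o$ on local functions.
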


Let $u:\e \hat \o \to \bbR$ be a function satisfying \eqref{norma}. As, by Lemma \ref{ringo}, $\pi^{\e} _{\o}(u)  \in \cD(\cL_\o)$, we can introduce on the \rosso{Skorohod} space $D\bigl(\bbR_+, \{0,1\}^{\hat \o}\bigr)$  the Dynkin martingale  $(\cM^\e_{\o,t})_{t\geq 0}$ given by   (see e.g. \cite[Appendix~1]{KL} or \cite[Section 3.2]{timo})
  \be \label{tremo}
 \cM^\e_{\o,t} : = \pi^{\e} _{\o,t}(u)  - \pi^{\e} _{\o,0}(u)  -\e^{-2} \int _0 ^t \cL_\o \left(  \pi^{\e} _{\o}(u) \right)(\eta_s) ds\,.
  \en
 $(\cM^\e_{\o,t})_{t\geq 0}$ is  \verde{a} square integrable martingale  w.r.t. the filtered probability space $\left( D\bigl(\bbR_+, \{0,1\}^{\hat \o}\bigr), \bbP^\e_{\o,\mathfrak{n}_\e }, (\cF_t)_{t\geq 0}\right)$, $\mathfrak{n}_\e$ being an arbitrary initial distribution and $\cF_t$ being the $\s$--field  generated $\{\eta_s:0\leq s\leq t\}$. Square integrability follows from the property that $\| \cM^\e_{\o,t} \|_\infty<+\infty$ as the same holds for all addenda in the r.h.s. of \eqref{tremo} (see Lemma \ref{ringo}).
%%%%%%%%%%%%%%%%%%%%%%%%%%%%%%%%%%%%%%%%%%
\begin{Lemma}\label{star}
  Let $u:\e \hat \o \to \bbR$ be a function satisfying \eqref{norma}.
Suppose  in addition that $ \sum_{x\in \hat \o} c_x(\o) u(\e x )^2 <+\infty$. 
Then  the sharp bracket process  of $\cM^\e_{\o,t}$ is  given by 
$\la \cM^\e_{\o}\ra _t= \int _0 ^t B^\e_{\o} (\eta_s)ds $,  where 
%%%%%%%%
\begin{equation}\label{compenso_IV}
B^\e_{\o}(\eta) =
%\e^{-2} \cL_\o \left(\pi^{\e} _{\o}(u) ^2  \right) - 2  \pi^{\e} _{\o }(u)  \e^{-2} \cL_\o \left( \pi^{\e} _{\o }(u)  \right)\\&
  \e^{2d-2}   \sum _{x\in \hat \o} \sum _{y \in \hat \o} c_{x,y}(\o)\bigl[u(\e x)-u(\e y)\bigr]^2 
 \eta (x) \bigl (1- \eta (y) \bigr)\,.
%\end{split}
\end{equation}
\end{Lemma}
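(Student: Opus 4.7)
The plan is to use the standard identity that, for a Markov process with generator $\mathcal{G}$ and a function $F$ such that both $F$ and $F^2$ lie in $\mathcal{D}(\mathcal{G})$, the sharp bracket of the Dynkin martingale associated to $F$ equals $\int_0^t (\mathcal{G} F^2 - 2F\,\mathcal{G}F)(\eta_s)\,ds$ (the carr\'e du champ formula). Here the process has time-rescaled generator $\mathcal{G} = \e^{-2}\mathcal{L}_\o$ and $F = \pi^\e_\o(u)$, so it suffices to compute $\mathcal{L}_\o F^2 - 2F\,\mathcal{L}_\o F$ and divide by $\e^2$.

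First I would justify that $F, F^2 \in \mathcal{D}(\mathcal{L}_\o)$. For $F$ this is given by Lemma \ref{ringo}, since the hypothesis \eqref{norma} is assumed. For $F^2$, I would prove an analogue of Lemma \ref{ringo} as follows. The boundedness of $F$ (from $\sum_x |u(\e x)| < \infty$) gives
\[
|F^2(\eta^{x,y}) - F^2(\eta)| \le 2\|F\|_\infty |F(\eta^{x,y}) - F(\eta)| \le 2\|F\|_\infty \e^d |u(\e x) - u(\e y)|,
\]
so that $\sum_{\{x,y\}} c_{x,y}(\o)|F^2(\eta^{x,y})-F^2(\eta)|$ is bounded by $4\|F\|_\infty \e^d \sum_{x \in \hat \o} c_x(\o)|u(\e x)|$, which is finite by \eqref{norma}. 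The same argument used in the proof of Lemma \ref{ringo} (and of Proposition \ref{prop_SEP}) then shows $F^2 \in \mathcal{D}(\mathcal{L}_\o)$ and that $\mathcal{L}_\o F^2$ is given by the exchange-operator expression \eqref{mahmood} evaluated at $F^2$.

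Next is the algebraic computation. Using \eqref{mahmood} together with the elementary identity
\[
F(\eta^{x,y}) - F(\eta) = \e^d[u(\e x) - u(\e y)][\eta(y) - \eta(x)],
\]
one obtains the carr\'e du champ
\[
\mathcal{L}_\o F^2(\eta) - 2F(\eta)\mathcal{L}_\o F(\eta) = \sum_{\{x,y\}\subset \hat \o} c_{x,y}(\o)\bigl[F(\eta^{x,y}) - F(\eta)\bigr]^2.
\]
Substituting, and noting that $[\eta(y)-\eta(x)]^2 = \eta(x)(1-\eta(y)) + \eta(y)(1-\eta(x))$, the sum over unordered pairs rewrites (by symmetry of $c_{x,y}$ in $x,y$ and relabeling) as
\[
\e^{2d} \sum_{x \in \hat \o} \sum_{y \in \hat \o} c_{x,y}(\o)\bigl[u(\e x)-u(\e y)\bigr]^2 \eta(x)(1-\eta(y)).
\]
Dividing by $\e^2$ yields $B^\e_\o(\eta)$ as defined in \eqref{compenso_IV}, concluding the computation.

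The main subtle point is ensuring absolute convergence of all the series involved, which is where the added hypothesis $\sum_x c_x(\o) u(\e x)^2 < \infty$ enters: bounding $[u(\e x) - u(\e y)]^2 \le 2u(\e x)^2 + 2u(\e y)^2$ and applying \eqref{fischio0} to the resulting double sum shows $B^\e_\o \in C(\{0,1\}^{\hat \o})$ with $\|B^\e_\o\|_\infty \le 4\e^{2d-2}\sum_{x\in \hat \o} c_x(\o) u(\e x)^2 < \infty$. This boundedness ensures that all the rearrangements used above are legitimate and that the integral $\int_0^t B^\e_\o(\eta_s)\,ds$ is well defined.
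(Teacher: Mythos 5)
Your proof is correct, but it handles the non-locality of $u$ differently from the paper, so it is worth comparing the two routes. Both rest on the same two ingredients: the carr\'e du champ identity (that $(\cM^\e_{\o,t})^2-\e^{-2}\int_0^t(\cL_\o F^2-2F\,\cL_\o F)(\eta_s)\,ds$ is a martingale for $F=\pi^\e_\o(u)$ with $F,F^2\in\cD(\cL_\o)$, i.e.\ \cite[Lemma~5.1, App.~1]{KL}) and the algebraic computation $\cL_\o F^2-2F\cL_\o F=\sum_{\{x,y\}\subset\hat\o}c_{x,y}(\o)\bigl[F(\eta^{x,y})-F(\eta)\bigr]^2$, which you carry out exactly as the paper does. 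The paper, however, applies the carr\'e du champ lemma only to the local truncations $u_n=u\,\mathds{1}(|\e\cdot|\le n)$ --- for which $F_n^2\in\cD(\cL_\o)$ is automatic --- and then passes to the limit at the level of the bracket: $\cM_{n,t}\to\cM_t$ uniformly, $B_n\to B$ uniformly, and the uniqueness of the predictable increasing process in the Doob--Meyer decomposition identifies $\la\cM^\e_\o\ra_t$. You instead prove $F^2\in\cD(\cL_\o)$ directly for the non-local $F$ and apply the lemma once. That is legitimate: your bound $\sum_{\{x,y\}}c_{x,y}(\o)|F^2(\eta^{x,y})-F^2(\eta)|\le 4\|F\|_\infty\e^d\sum_x c_x(\o)|u(\e x)|$ is the right absolute-convergence estimate, and the one step you only gesture at --- the uniform-in-$\eta$ convergence of the exchange-operator series for $F_n^2$ to that for $F^2$, needed to invoke closedness of $\cL_\o$ --- does go through under \eqref{norma} alone, since $\|F_n-F\|_\infty\to 0$ and $\sum_{x:|\e x|>n}c_x(\o)|u(\e x)|\to 0$. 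What your route buys is that you never need the uniqueness characterization of the sharp bracket; what the paper's route buys is that the carr\'e du champ lemma is only ever invoked for local functions. The final algebra and the use of the hypothesis $\sum_x c_x(\o)u(\e x)^2<+\infty$ to bound $B^\e_\o$ uniformly (hence to make sense of $\int_0^tB^\e_\o(\eta_s)\,ds$) are identical in the two proofs.
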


Note that the bound $\sum_{x\in \hat \o} c_x(\o) u(\e x )^2 <+\infty 
$ implies that the r.h.s. of \eqref{compenso_IV} is an absolutely convergent series of functions in $C(\{0,1\}^{\hat \o})$. For later use, we recall that $\la \cM^\e_{\o}\ra _t$ can be characterized as 
the unique predictable increasing process such that $(\cM^\e_{\o,t})^2- \la \cM^\e_{\o}\ra _t$ is a martingale \cite[Theorem~8.24]{Kl}.

\begin{Remark} In the proof of Theorem \ref{teo1} (see  Section \ref{sec_teso}) we will apply the above Lemmas \ref{ringo} and \ref{star} just to functions $u$ of the form $R^\e_{\o,\l } \psi$ for suitable functions $\psi \in C_c (\bbR^d)$, where \verde{$R^\e_{\o,\l } \psi$ is the resolvent introduced in Section \ref{figlio_stress}}.
\end{Remark}
%%%%%
\begin{proof}[Proof of Lemma \ref{ringo}]
 As $\sum_{x\in \hat \o } |u(\e x) | <+\infty$, it is simple to check that the series defining 
 $\pi^{\e} _{\o}(u)$ is indeed an   absolutely convergent series of continuous functions w.r.t. the uniform norm. The same holds for the series corresponding to the r.h.s. of \eqref{airone25}. Indeed,  by  \eqref{fischio0}, $ \sum _{x\in \hat \o} \| \eta(x) \tilde \bbL^\e_\o u (\e x) \|_\infty \leq  2 \e^{-2} \sum_{x\in \hat \o } c_x(\o) |u(\e x) | <+\infty$.
 
 When the function $u$ is local, also the map $\eta\mapsto  \pi^\e_\o (u)$ is local.
 By locality and  Proposition \ref{prop_SEP},  this map  belongs to  $\cD(\cL_\o) $. In the case of local $u$, \eqref{airone25} follows from easy computations by \eqref{mahmood}. 
 We now treat the general case. 
Given $n\in \bbN$, we define $u_n(\e x ):= u(\e x )\rosso{\mathds{1}}( |\e x | \leq n) $. As observed above, 
$\pi^{\e} _{\o}(u_n)$ is a local function on $\{0,1\}^{\hat \o}$ belonging to $\cD(\cL_\o)$ and \eqref{airone25} holds with $u_n$ instead of $u$.
   We claim that 
\begin{align}
& \lim_{n \to \infty} \| \pi^{\e} _{\o}(u_n) - \pi^{\e} _{\o}(u) \|_\infty=0\,, \label{fischio1}\\
& \lim_{n \to \infty}\|   \sum _{x\in \hat \o}  \eta(x) \tilde \bbL^\e_\o u_n (\e x) -   \sum _{x\in \hat \o}  \eta(x) \tilde \bbL^\e_\o u (\e x) \|_\infty =0\,.\label{fischio2}
\end{align}
As $\cL_\o$ is a closed operator being a  Markov generator,  \eqref{airone25} with $u_n$ instead of $u$,
 \eqref{fischio1} and \eqref{fischio2}  imply that  $ \pi^{\e} _{\o}(u)\in \cD(\cL_\o)$ and that \eqref{airone25} holds. 
To prove \eqref{fischio1} and \eqref{fischio2} it is enough to bound the uniform norms appearing there by, respectively,  $\e^d\sum_{x\in \hat \o: |\e x|>n}| u(\e x) |$ and  $2 \e^{-2}\sum_{x\in \hat \o: |\e x|>n} c_x(\o) |u(\e x) |$ and use \eqref{norma}. This concludes the proof of \eqref{airone25}.

It remains to show that    $\bbL^\e_\o u = \tilde \bbL^\e_\o u $ if  $u \in \cD( \bbL^\e_\o )\subset L^2(\mu^\e_\o)$ in addition to \eqref{norma}. Given a  function $f\in C(\{0,1\}^{\hat \o})$ we write  $S(t) f (\eta ):=\bbE_\eta [ f(\eta_t) ]$ for the Markov semigroup associated to the simple exclusion process (without any time rescaling).  Then  \eqref{airone25}   can be read as 
\be\label{bibita}
\lim _{t\da 0}\sup_{\eta \in \{0,1\}^{\hat \o} }\Big| \frac{\left(S(t)  \pi^{\e} _{\o}(u)\right)(\eta)-  \pi^{\e} _{\o}[\eta](u)}{t}- \e^{d+2}  \sum _{x\in \hat \o}  \eta(x) \tilde \bbL^\e_\o u (\e x)\Big|=0\,.
\en
Given $x_0 \in \hat \o$ we take $\eta$ corresponding to a single particle located at $x_0$. Then
$\left(S(t)  \pi^{\e} _{\o}(u)\right)(\eta)= \e^d  E_{x_0} [  u ( \e X^{\o}_t) ]$ and 
 \eqref{bibita} implies that
 % $\left( E_{x_0} [  u ( \e X^{\o}_t) ] - u(x_0))\right)/t- \e^{2}   \tilde \bbL^\e_\o u (\e x_0)$ converges to zero as $t\da 0$. \be\label{bibita_bis} \lim _{t\da 0} \left| \frac{ E_{x_0} [  u ( \e X^{\o}_t) ] - u(x_0)}{t}- \e^{2}   \tilde \bbL^\e_\o u (\e x_0)\right|=0\,. \enHence, 
 $\frac{d}{dt} E_{x_0} [  u ( \e X^{\o}_{\e^{-2}t}) ]_{|t=0}=     \tilde \bbL^\e_\o u (\e x_0)$.
On the other hand, we know that $u \in \cD(\bbL^\e_\o)$. Hence 
\be
\lim _{t\da 0}\sum_{x\in \hat \o}  \Big| \frac{ E_{x} [  u ( \e X^{\o}_{\e^{-2} t})] - u(x)}{t}-    \bbL^\e_\o u (\e x)\Big|^2 =0,
\en
which implies that  $\frac{d}{dt} E_{x_0} [  u ( \e X^{\o}_{\e^{-2}t}) ]_{|t=0}=    \bbL^\e_\o u (\e x_0)$. Then it must be  $\bbL^\e_\o u (\e x_0)=\tilde \bbL^\e_\o u (\e x_0)$.
\end{proof}

\begin{proof}[Proof of Lemma \ref{star}]
% When the function $u$ is local, also the map $\eta\mapsto  \pi^\e_\o (u)$ is local.  By locality and  Proposition \ref{prop_SEP},  this map and its square belong to the domain  $\cD(\cL_\o) \subset C( \{0,1\}^{\hat \o})$ of the SEP generator $\cL_\o$.
 For $u$ local both   $\pi^\e_\o (u)$ and its square belong to $\cD(\cL_\o)$ being  local functions of $\eta$. Then the statement in the lemma can be checked by simple computations due to \eqref{mahmood}, Lemma \ref{ringo} and \cite[Lemma~5.1, \verde{Appendix}~1]{KL} (equivalently, \cite[Exercise~3.1 and Lemma~8.3]{timo}).  For the computation of the sharp bracket process we just comment   that, by using the symmetry of $c_{x,y}(\o)$, one easily gets 
 \[\cL _\o   (     \pi^\e_\o  (u)^2 )- 
 2     \pi^\e_\o  (u)  \cL _\o (  \pi^\e_\o (u) )    = \e^{2d} \sum _{x\in \hat \o} \sum _{y \in \hat \o} c_{x,y}(\o) [ u(\e x)- u (\e y)]^2 
 \eta(x) (1- \eta(y)).
 \]

We now move to the general case.  For simplicity of notation we write  $\cM_t$, $B(\eta)$ instead of $\cM^\e_{\o,t}$, $B^\e_\o(\eta)$.  Similarly, we define  $\cM_{n,t} $  and $B_n(\eta)$  as in \eqref{tremo} and \eqref{compenso_IV}   with $u$ replaced by $u_n$, $u_n(\e x):= u(\e x) \mathds{1}(|\e x| \leq n)$.  Note that   $\lim _{n \to +\infty}\sup_{t\in  [0,T]} \| \cM_t - \cM _{n,t}\|_\infty =0$ for any $T>0$ (see \eqref{airone25}, \eqref{fischio1} and \eqref{fischio2}).
Hence, by the characterization of the sharp bracket process recalled after Lemma  \ref{star} and  by   our results for the local case (applied to $u_n$), to get \eqref{compenso_IV}  it is enough to show that $\lim_{n\to \infty} \|B_n(\eta)-B(\eta) \|_\infty =0$. To this aim it is enough to show that $\sum _{x\in \hat \o} \sum _{y \in \hat \o} c_{x,y}(\o)\bigl[ u_n(\e x)- u_n (\e y)\bigr]^2$ converges, as $n\to \infty$, to the analogous expression with $u$ instead of $u_n$. This follows from the dominated convergence theorem, by dominating  $\bigl[ u_n(\e x)- u_n (\e y)\bigr]^2$ with $2 u(\e x)^2 + 2 u(\e y)^2$ and by using that $\sum _{x\in \hat \o} \sum _{y \in \hat \o} c_{x,y}(\o)\bigl[ u(\e x)^2 + u(\e y)^2\bigr]=2 \sum_{x\in \hat \o} c_x(\o) u(\e x)^2<+\infty $. 
\end{proof}

%%%%%%%%%%%%%%%%%%%%%%%%%%%%%%%%%%%%%%%%%%%%
%%%%%%%%%%%%%%%%%%%%%%%%%%%%%%%%%%%%%%%%%%%%%
%%%%%%%%%%%%%%%%%%%%%%%%%%%%%%%%%%%%%%%%%%%%%
%%%%%%%%%%%%%%%%%%%%%%%%%%%%%%%%%%%%

\section{Space $\cM$ of Radon measures and \rosso{Skorohod} space $D([0,T], \cM)$}\label{sec_mammina}
%In this section, for later use, we recall some concepts and fix some notation  concerning  c\`adl\`ag paths with Radon measure values. We present a technical variation in the usual  choice of the countable set of test functions.
 Given a measure $\mu$ on $\bbR^d$ and a real function $G$ on $\bbR^d$, we will denote by $\mu(G)$ the integral $\int d\mu(x) G(x)$.
We denote by $\cM$ the space of Radon measures  on $\bbR^d$, i.e. locally bounded   Borel measures on $\bbR^d$.
$\cM$ is endowed with the vague topology, for which $\mu_n \to \mu$ if and only if  $\mu_n(f) \to \mu(f)$ for all $f\in C_c(\bbR^d)$. This topology can be defined through a metric, that we now recall also for later use (for more details, see e.g.~\cite[Appendix~A.10]{timo}). To this aim we set $B_r:=\{x\in \bbR^d: |x| \leq r\}$.
For each $\ell\in \bbN$ we choose a sequence of functions $(\varphi_{\ell, n})_{n \geq 0}$ such that\footnote{Some of our requirements will be used to prove the hydrodynamic behavior for $\cP$--a.a. $\o$ and are not strictly necessary to define the metric on $\cM$. }
\begin{itemize}
\item[(i)]  $\varphi_{\ell, n}
\in C^\infty_c (\bbR^d)$   and $\varphi_{\ell, n}$  is  supported on $B_{\ell+1}$;
\item[(ii)] the family 
$(\varphi_{\ell, n})_{n \geq 0}$ contains a function  with values in $[0,1]$, equal to $1$ on $B_\ell$ and equal to $0$ outside $B_{\ell+1}$;
\item[(iii)]   for each $\d>0$ and $\varphi \in C^\infty_c(\bbR^d)$ with support in $B_\ell$ there exists $n\geq 0$ such that $\| \varphi_{\ell, n} - \varphi\|_\infty \leq \d$ and  $\sup_{i,k=1}^d\| \partial^2_{x_i, x_k} \varphi_{\ell, n} -\partial^2_{x_i,x_k} \varphi\|_\infty \leq \d$.
\end{itemize}

For the existence of such a set of functions $\varphi_{\ell, n} $ we  refer \cite[Appendix~A]{timo} and  discuss only Item (iii) which is in part new. To deal with Item (iii) we  use an extended version of the classical Weierstrass approximation theorem  (see \cite[Theorem~1.6.2]{Nara}) implying that, for any compact set $K$,  the family $\cP$ of polynomial functions with rational coefficients is dense in $C^2(\bbR^d)$ w.r.t. to the semi-norm $\| f\|_K := \|f\|_{L^\infty(K)}+\sum_{i=1}^d \| \partial_{x_i} f\|_{L^\infty(K)}+ \sum_{i,j=1}^d \|\partial^2_{x_i,x_j} f\|_{L^\infty (K)}$. For each $\ell$ fix a function $g_\ell$ as in Item (ii). Given $\varphi \in C^\infty_c(\bbR^d)$ as in Item (iii), by applying  Leibniz rule to  $\varphi-g_\ell f= g_\ell (\varphi-f)$, one easily gets that 
$\| \varphi - g_\ell f\|_{B_{\ell+1}}\leq C(d) \| g_\ell \|_{B_{\ell+1}} \| \varphi - f\|_{B_{\ell+1}}$. Hence, to fulfill Item (iii), it is enough to include into $\{\varphi_{\ell,n}\}$ the countable family of functions $\{ g_\ell f\, : \, f\in \cP\}$. 

\begin{Definition}\label{tuono20}
By a relabeling, we  write $( \varphi_j)_{j\in \bbN} $ for the family  $ ( \varphi_{\ell, n})_{  \ell, n \in \bbN}$.
\end{Definition}
On $\cM$ we define the metric $d_\cM$ as
%\be\label{meme}
$d_\cM (\mu, \nu):= \sum_{j=0}^\infty 2^{-j} \left( 1\wedge | \mu(\varphi_j) -\nu(\varphi_j) | \right)$.
%\,.\en
It can be proved that $(\cM,d_\cM)$ is a Polish space  and that the topology induced by the metric $d_\cM$ coincides with the vague topology (see e.g.~\cite[Appendix~A.10]{timo}, \cite{DV}).
%and moreover that a set $\cA\subset \cM$ is relatively compact if and only if 
%\be\label{rel_comp_M}
%\sup_{\mu\in \cA} \mu(B) <+\infty \qquad \forall B \subset\bbR^d \text{ Borel and bounded}\,.
%\en
%We refer to \cite[Appendix A.10]{S} and \rosso{\club \cite{DV1} controllare se c'e'} for the proofs.
%

We  write $D([0,T],\cM)$ for the \rosso{Skorohod} space of  $\cM$--valued c\`adl\`ag paths $(\mu_t)_{0\leq t \leq T}$. We recall (cf. \cite[Section 4.1]{KL}) that $D([0,T],\cM)$ is a Polish space endowed with the metric
\be
d\left( \mu_\cdot, \nu_\cdot\right) 
:= \inf _{\l \in \L} \max \Big\{ \|\l\|, \sup_{0 \leq t \leq T} d_\cM \bigl( \mu_t,\nu_t \bigr) \Big\}\,,
\en
where $\L$ is the set of strictly increasing continuous functions $\l:[0,T]\to [0,T]$ with $\l(0)=0$, $\l(T)=T$,
and 
$\|\l\|:=\sup_{s\not = t} \big |\ln[(\l(t)-\l(s))/(t-s)] \big|$. As a subset $A\subset \cM$ is relatively compact if and only if $\sup\{\mu(K) \,:\, \mu \in A\}<+\infty$  for any compact set $K\subset \bbR^d$ (cf. \cite[\verde{Appendix}~A]{timo}),
by the same arguments used in the proof of \cite[\verde{Proposition}~1.7, Chapter~4]{KL} one gets the following:
\begin{Lemma}\label{proiettore} Given an index set $\cA$,  a family of probability measures $\{ Q^\a\}_{\a \in \cA}$ on $D([0,T],\cM)$ is relatively compact  (\verde{w.r.t.}~weak convergence) if and only if for any $j\in \bbN$  the family of probability measures   $\{ Q^\a\circ \Phi_j ^{-1} \}_{\a \in \cA}$ on $D([0,T],\bbR)$ is relatively compact, where 
\be
\Phi_j: D([0,T],\cM)\ni  (\mu_t )_{0\leq t\leq T}  \mapsto  (\mu_t(\varphi_j) )_{0\leq t\leq T}\in D([0,T],\bbR)\,.
\en
\end{Lemma}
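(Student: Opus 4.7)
The plan is to adapt the proof of the analogous Kipnis--Landim result (Prop.~1.7, Ch.~4) to our setting. There are two directions.

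For the forward direction, I would observe that each $\Phi_j$ is continuous (in fact Lipschitz) from $D([0,T],\cM)$ to $D([0,T],\bbR)$. This follows because the inequality $1 \wedge |\mu(\varphi_j)-\nu(\varphi_j)| \leq 2^j d_\cM(\mu,\nu)$ makes the evaluation map $\mu \mapsto \mu(\varphi_j)$ Lipschitz on $\cM$; post-composition of a c\`adl\`ag path with a continuous map is continuous in the Skohorod topology. Hence $\{Q^\alpha \circ \Phi_j^{-1}\}$ is the image of the relatively compact family $\{Q^\alpha\}$ under a continuous map, so it is relatively compact.

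For the reverse direction I would use that $D([0,T],\cM)$ is Polish and apply Prohorov's theorem, reducing to tightness of $\{Q^\alpha\}$. Given $\delta>0$, relative compactness of each $\{Q^\alpha\circ\Phi_j^{-1}\}$ lets me choose compact $K_j\subset D([0,T],\bbR)$ with $Q^\alpha(\Phi_j^{-1}(K_j^c)) \leq \delta\, 2^{-(j+1)}$ uniformly in $\alpha$. Set $K:=\bigcap_j \Phi_j^{-1}(K_j)$, so $Q^\alpha(K)\geq 1-\delta$; it then suffices to prove that $K$ is relatively compact in $D([0,T],\cM)$. I would verify the standard two--condition criterion for compactness in $D([0,T],X)$ (compact containment of time slices plus vanishing Skohorod modulus). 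For the compact containment, the criterion recalled before the lemma says that $A\subset\cM$ is relatively compact iff $\sup_{\mu\in A}\mu(L)<\infty$ for every compact $L\subset\bbR^d$. Given such an $L\subset B_\ell$, property (ii) of the $\varphi_{\ell,n}$'s provides a specific $\varphi_{\ell,n_*}$ equal to $1$ on $B_\ell$; since $\{\mu_\cdot(\varphi_{\ell,n_*}):\mu_\cdot\in K\}$ lies in a compact set of $D([0,T],\bbR)$ it is uniformly bounded, hence $\sup_{\mu_\cdot\in K,\,t} \mu_t(L) \leq \sup \mu_t(\varphi_{\ell,n_*})<\infty$, giving the time--slice compactness.

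For the modulus condition I would use the truncation
\[
d_\cM(\mu,\nu) \leq \sum_{j=0}^N 2^{-j}\bigl|\mu(\varphi_j)-\nu(\varphi_j)\bigr| + 2^{-N},
\]
reducing for any fixed $\eta>0$ (choosing $N$ with $2^{-N}<\eta/2$) to a joint Skohorod modulus for the first $N$ scalar projections. Each $K_j$ being compact in $D([0,T],\bbR)$ gives $\lim_{\sigma\downarrow 0}\sup_{\mu_\cdot\in K} w'(\mu_\cdot(\varphi_j),\sigma)=0$, and via the weaker modulus $w''$ (which does not require a common time--change and aggregates additively across coordinates) these controls combine to yield a vanishing modulus for $\mu_\cdot$ in $\cM$.

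The main obstacle I anticipate is exactly this last step: the classical Skohorod modulus $w'$ demands a single partition common to every scalar projection, while each compact $K_j$ only supplies a partition tailored to $\varphi_j$. I would circumvent it by working with Billingsley's alternative modulus $w''$ (or equivalently, extracting a diagonal subsequence that converges in every scalar projection and verifying that, since the $\varphi_j$'s are dense in $C_c(\bbR^d)$ for the relevant $C^2$ seminorms by property (iii), the joint convergence of projections lifts to convergence in $D([0,T],\cM)$). Everything else is bookkeeping on the definition of $d_\cM$ and Prohorov's theorem.
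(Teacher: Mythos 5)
Your overall strategy matches the paper's, which gives no detailed argument: it simply records the compactness criterion for subsets of $\cM$ and defers to the proof of Prop.~1.7, Ch.~4 of Kipnis--Landim. Your necessity direction (continuity of each $\Phi_j$; note the map is only locally Lipschitz since $1\wedge|\cdot|$ is what the metric controls, but continuity suffices) and your compact-containment step via the cutoff functions $\varphi_{\ell,n_*}$ are both correct.

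The gap is in the modulus step, exactly where you anticipated trouble, and neither of your proposed fixes closes it. The parenthetical claim that $w''$ ``aggregates additively across coordinates'' is false: take $x^1_t=\mathds{1}_{\{t\ge 1/2\}}$ and $x^2_t=\mathds{1}_{\{t< 1/2+1/n\}}$; each path has a single jump, so $w''_{x^1}=w''_{x^2}=0$, yet for the pair $x=(x^1,x^2)$ one gets $w''_x(\sigma)\ge 1$ whenever $n\ge 2/\sigma$ (take $t_1=1/2-1/n$, $t=1/2$, $t_2=1/2+1/n$, so that $x_{t_1}=(0,1)$, $x_t=(1,1)$, $x_{t_2}=(1,0)$). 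For the same reason the truncation of $d_\cM$ does not give $w'_{\mu_\cdot}(\sigma)\le\sum_{j\le N}2^{-j}\,w'_{\mu_\cdot(\varphi_j)}(\sigma)+2^{-N}$: an infimum over partitions of a sum is not bounded by the sum of the infima, since each coordinate selects its own partition. Hence compactness of $\bigcap_j\Phi_j^{-1}(K_j)$ cannot be obtained coordinate-by-coordinate: a sequence can have every single projection relatively compact while developing two nearby macroscopic jumps seen by different $\varphi_j$'s. What actually rules this out is that $\{\varphi_j\}$ separates points of $\cM$ and is sup-norm dense on compacts (together with the compact containment you already proved): an intermediate state persisting on a shrinking time interval and differing from both neighbouring states would be detected by a \emph{single} $\varphi_j$ close in sup norm to a suitable test function, and that one scalar projection would then fail to be relatively compact. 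This point-separation argument is precisely the nontrivial content of the Kipnis--Landim/Jakubowski-type criterion, and it is the step your diagonal-subsequence alternative asserts (``joint convergence of projections lifts to convergence in $D([0,T],\cM)$'') without proof; note also that the relevant density is in sup norm, not in the $C^2$ seminorms of property (iii). As written, the sufficiency direction is not established.
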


 Recall that   $B_r:=\{x\in \bbR^d\,:\,|x|\leq r\}$.
 The following fact can be obtained by suitably modifying and afterwards extend the proof of \cite[Theorem~A.28 in Appendix~A]{timo}. W.r.t. the version in \cite{timo}, we have removed the assumption of non-degenerate diffusion matrix and we have modified the  mass bounds. 
 \begin{Lemma}\label{timau} Let $v_0: \bbR^d\to \bbR$ be Borel and bounded.  Let $\a: [0,T]\to \cM$ be a  map  such that 
\begin{itemize}
\item[(i)] $\a$ is continuous when  $\cM$ is  endowed with the vague topology;
\item[(ii)] $\a_0 (dx)=v_0(x) dx$;
\item[(iii)] for all $\varphi \in C_c^\infty (\bbR^d)$ and $t \in [0,T]$ it holds 
\be
\int _{\bbR^d} \varphi(x) \a_t( dx)=  \int _{\bbR^d} \varphi(x) \a_0(dx) +\int_0^t ds \int_{\bbR^d}  \nabla \cdot ( D \nabla  \varphi)(x)  \a_s(dx) \,;
\en
\item[(iv)] for some constants $C,r_0,\g>0$ it holds  $ \a_s( B_r)  \leq C r^\g$  for all $s\in [0,T]$ and all $r\geq r_0$.
\end{itemize}
Then $\a_t(dx)=P_t v_0(x) dx$ for all $t\in [0,T]$.
\end{Lemma}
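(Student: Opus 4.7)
The plan is a duality argument against the backward Brownian semigroup, combined with a spatial cutoff adapted to the possibly degenerate $D$ and the polynomial mass bound~(iv). Fix $t\in(0,T]$ and $\varphi\in C_c^\infty(\bbR^d)$; since Radon measures on $\bbR^d$ are determined by their action on $C_c^\infty(\bbR^d)$, it suffices to prove $\int\varphi\,d\alpha_t=\int\varphi\,(P_tv_0)\,dx$. Let $H:=\operatorname{span}(\mathfrak{e}_1,\dots,\mathfrak{e}_{d_*})$, so that $B_t\in H$ almost surely. By Gaussian convolution on $H$, the dual $\psi(s,x):=P_{t-s}\varphi(x)$ is $C^\infty$ on $[0,t]\times\bbR^d$, bounded with bounded spatial derivatives of every order, and satisfies $\partial_s\psi+L\psi=0$ with $L:=\nabla\cdot(D\nabla)$.

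First I would extend (iii) to $C^{1,2}$ test functions $\Phi(s,x)$ whose spatial supports lie in a common ball: telescoping along a partition $0=s_0<\cdots<s_N=t$, splitting each increment as $\int\Phi(s_{k+1},\cdot)\,d(\alpha_{s_{k+1}}-\alpha_{s_k})+\int[\Phi(s_{k+1},\cdot)-\Phi(s_k,\cdot)]\,d\alpha_{s_k}$, applying (iii) to the first piece and a mean-value estimate to the second, and refining the mesh, gives
\begin{equation*}
\int\Phi(t,x)\,d\alpha_t-\int\Phi(0,x)\,d\alpha_0=\int_0^t ds\int\bigl[\partial_s\Phi+L\Phi\bigr](s,x)\,d\alpha_s.
\end{equation*}
I would then apply this to $\Phi_n(s,x):=\chi_n(x)\psi(s,x)$, with $\chi_n\in C_c^\infty(\bbR^d)$ equal to $1$ on $B_n$, vanishing outside $B_{n+1}$, and with first and second derivatives bounded uniformly in $n$. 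Leibniz, symmetry of $D$, and the backward equation $\partial_s\psi+L\psi=0$ give
\begin{equation*}
\partial_s\Phi_n+L\Phi_n=\psi\,L\chi_n+2(D\nabla\chi_n)\cdot\nabla\psi,
\end{equation*}
supported on the annulus $B_{n+1}\setminus B_n$.

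The hard part is to show this cutoff residual vanishes uniformly in $s\in[0,t]$ when $D$ is degenerate and $\psi$ is therefore not compactly supported. Write $x=x_\parallel+x_\perp$ with $x_\perp\in H^\perp$ and fix $R$ so that $\operatorname{supp}\varphi\subset B_R$. Since $B_{t-s}\in H$ leaves $x_\perp$ invariant, $\psi(s,x)=0$ whenever $|x_\perp|>R$, while for $|x_\perp|\le R$ the Gaussian tail of $B_{t-s}$ on $H$ gives $|\psi(s,x)|+|\nabla\psi(s,x)|\le C\exp\bigl(-c(|x_\parallel|-R)_+^2/(t-s)\bigr)$. For $x\in B_{n+1}\setminus B_n$ with $n\ge 4R$ one has $|x_\parallel|\ge n/2$, so the residual is pointwise at most $C\exp(-c'n^2/(t-s))$. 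Combined with (iv) and the uniform bounds on the derivatives of $\chi_n$, the cutoff error is bounded by
\begin{equation*}
C(n+1)^\gamma\int_0^t e^{-c'n^2/(t-s)}\,ds\le Ct(n+1)^\gamma e^{-c'n^2/t}\xrightarrow[n\to\infty]{}0.
\end{equation*}
Passing to the limit, using (ii) to evaluate the $s=0$ boundary term and the self-adjointness $\int P_t\varphi\cdot v_0\,dx=\int\varphi\cdot P_tv_0\,dx$ (which follows from the symmetry of $D$ and hence of the distribution of $B_t$ on $H$), one concludes $\int\varphi\,d\alpha_t=\int\varphi\,P_tv_0\,dx$ for every $\varphi\in C_c^\infty(\bbR^d)$, whence $\alpha_t(dx)=P_tv_0(x)\,dx$ for all $t\in[0,T]$.
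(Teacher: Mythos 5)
Your proof is correct, and it takes a genuinely different route from the paper's. The paper splits into two cases: for non-degenerate $D$ it mollifies $\a_t$ in space and invokes a classical Tychonoff-type uniqueness theorem for the heat equation under polynomial growth (citing Sepp\"al\"ainen's Theorem A.28/A.30, which rests on Evans), using (iv) only to verify the growth hypothesis of that theorem; for degenerate $D$ it first disintegrates along $\ker D$ by testing against product functions $\tilde\varphi(x')\psi(x'')$, reducing to the lower-dimensional non-degenerate statement, and then concludes by density. You instead run a single, self-contained duality argument: extend (iii) to time-dependent tests, pair $\a$ against the backward flow $P_{t-s}\varphi$ multiplied by a cutoff $\chi_n$, and kill the cutoff residual on the annulus $B_{n+1}\setminus B_n$ by combining the exact vanishing of $P_{t-s}\varphi$ transverse to the range of $D$ with the Gaussian tail along it and the polynomial mass bound (iv). Your version buys uniformity (no case distinction, no external uniqueness theorem, no mollification of $\a$) at the price of the careful annulus estimate; the paper's version is shorter on the page because the analytic core is outsourced to a cited result. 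Two small points of hygiene: hypothesis (iii) is stated for $C_c^\infty$ spatial test functions, so your ``extension to $C^{1,2}$ tests'' should be phrased for tests that are smooth in $x$ (which is all you use, since $\Phi_n(s,\cdot)=\chi_n P_{t-s}\varphi$ is $C^\infty$); and in the telescoping step the second increment needs not just a mean-value estimate but also the vague continuity (i) together with the compactness of $\{\partial_s\Phi(v,\cdot)\}_{v\in[0,t]}$ in the uniform norm to replace $\a_{s_k}$ by $\a_u$ --- both are routine and do not affect correctness.
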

%%%%%%%%%%%%%%%%%%%%%%%%%%%%%%%%%%%%%%%%%%%%
\begin{proof} We distinguish two cases according to the non-degeneracy of $D$.

$\bullet$ \emph{Case $D$ non-degenerate}. The proof is the same of \cite[Theorem~A.28]{timo} apart of modifying \cite[Eq.~(A.40)]{timo}. To this aim,  as in \cite{timo}, let $f\in C_c^\infty(\bbR^d)$ be a nonnegative, symmetric function with $\int_{\bbR^d} f(x)dx=1$. Set $f^\epsilon(x):=\epsilon^{-d} f(x/\epsilon)$ and  $v^\epsilon (x,t):=\int_{\bbR^d} f^\epsilon(x-y)\a_t(dy)$. Then \cite[Eq.~(A.40)]{timo} has to be replaced
 by the bound  (for $0<\epsilon\leq 1$)
\[ |v^\epsilon (x,t) |\leq \|f^\epsilon\|_\infty \a_t (x+ B_1) \leq \|f^\epsilon \|_\infty \a_t( B_{r_0+1+|x|}) \leq C \|f \|_\infty \epsilon^{-d} 
(r_0+1+|x|)^\g\,,
\]
which holds uniformly in $t\in [0,T]$  due to Item (iv).
%(above  we have used the same notation of the proof of  \cite[Theorem~A.28]{timo}).
 The above bound is enough to apply \cite[Theorem~A.30]{timo} (which is  a byproduct of \cite[Theorems~1 and 7, Section~2.3]{evans}). Then one can proceed  and conclude as in \cite{timo}.

$\bullet$ \emph{Case  $D$ degenerate}.  
Without loss, at cost of a linear  change of coordinates, we can assume that $D$ is diagonal with strictly positive eigenvalues on $e_1, e_2, \dots, e_{d_*}$, and zero eigenvalue on $e_{d_*+1}, \dots, e_d$ ($e_1,\dots, e_d$ being the canonical basis). 
 By  writing $p_t(\cdot,\cdot )$ for the probability transition kernel of the Brownian motion on $\bbR^{d_*}$ with non--degenerate diffusion matrix $2 \tilde D:=(2 D_{i,j})_{1\leq i,j \leq d_*}$, it holds 
 \be\label{pesciolini}
 P_t v_0(x',x'') = \int _{\bbR^{d_*}} p_t ( x', z') v_0( z', x'' ) dz'\,\qquad (x', x'')\in \bbR^{d_*} \times \bbR^{d-d_*}= \bbR^d\,. 
 \en

Given $\psi \in C_c ^\infty(\bbR^{d-d_*})$  with $\psi\geq 0$,  we define $\tilde \a_t (dx')$ as the measure on $\bbR^{d_*}$ given by 
$ \tilde \a_t (B):=  \int _{\bbR^d} \mathds{1}_{B}(x') \psi(x'') \a_t(d x', d x'')$  for all Borel  $ B\subset \bbR^{d_*}$.
%\be 
%\int _{} \tilde \varphi (x') d \tilde \a_t (x' )= 
%\int _{\bbR^{d}} \tilde \varphi(x') \psi (x'') d \a_t (x',x'')\qquad\tilde  \varphi \in C_c(\bbR^d) \,.
%\en
Above, and in what follows, $x'\in \bbR^{d_*}$ and $x''\in \bbR^{d-d_*}$.
Then  $\tilde \a _t\in \cM(\bbR^{d_*})$, where $\cM(\bbR^{d_*})$ is defined as $\cM$ but with $\bbR^{d_*}$ instead of $\bbR^d$. Due to Item  (i) the path $\tilde \a:[0,T]\to \cM(\bbR^{d_*})$ is continuous. Due to Item (ii) we have $\tilde \a_0 ( dx') =\tilde v_0(x')d x'$ where $\tilde v_0(x')\verde{:=}\int_{\bbR^{d-d_*}} v_0 (x',x'') \psi (x'') dx''$ \verde{(trivially $\tilde v_0$ is bounded and Borel)}.
Moreover, taking   $\varphi(x',x''):=\tilde \varphi(x')\psi(x'')$ in Item (iii)  with $\tilde \varphi\in  C^\infty_c(\bbR^{d_*})$, we get that $\int _{\bbR^{d_*}} \tilde\varphi(x') \tilde \a_t( dx')=  \int _{\bbR^{d_*}} \tilde \varphi(x') \tilde \a_0(dx') +\int_0^t ds \int_{\bbR^{d_*}}  \nabla \cdot (\tilde  D \nabla  \tilde  \varphi)(x')  \tilde \a_s(dx') $. 
We set $\tilde B_r:=\{x'\in \bbR^{d_*}\,:\, |x'|\leq r\}$ and 
let $r_\psi$ be the minimal radius such that $\psi$ has support in the ball of $\bbR^{d-d_*}$ centered at the origin with radius $r_\psi$. Then,
due to Item (iv),  it holds 
$\tilde \a_s ( \tilde B_r) = \int _{\bbR^d} \mathds{1}_{\tilde  B_r}(x') \psi(x'') \a_s(d x', d x'') \leq 
C 2^\g r^\g
$
if $r \geq \tilde r_0:= \max\{r_0,r_\psi\}$. Hence, we have checked that the path $\tilde \a$ satisfies the same conditions appearing in Lemma \ref{timau}, restated for $\bbR^{d_*}$ with $D$ replaced by $\tilde D$. By the non-degenerate case we conclude that 
$\tilde \a_t (d x')=[ \int_{\bbR^{d_*}} dz' p_t (x', z') \tilde v_0(z')] dx'$. Hence, for all $\tilde  \varphi \in C_c(\bbR^d)$  and $\psi \in C_c ^\infty(\bbR^{d-d_*})$ with $\psi \geq 0$, we have
\be
\begin{split}
& \int _{\bbR^{d}} \tilde \varphi(x') \psi (x'') \a_t (dx',dx'') =\int _{\bbR^{d_*}} dx' \tilde \varphi(x') \int_{\bbR^{d_*}} dz' p_t (x', z') \tilde v_0(z')\\
& =\int _{\bbR^{d_*}} dx' \int_{\bbR^{d-d_*}} d x'' \tilde \varphi(x') \psi(x'') \int_{\bbR^{d_*}} dz'p_t (x', z')  v_0(z',x'')\\
&= \int _{\bbR^{d_*}} dx' \int_{\bbR^{d-d_*}} d x'' \tilde \varphi(x') \psi(x'') P_t v_0 (x',x'')\,.
 \end{split}
\en
 By additivity and density we then get that  $\a_t(dx) = P_t v_0(x) dx $.
\end{proof}
%%%%

%\newpage
%$$ $$
%Given  $g\in L^\infty(\bbR^d)$ let us say that $u \in L^\infty([0,T]\times \bbR^d)$ is a weak solution of the Cauchy problem
%\[
%\begin{cases}
%\partial_t u = \Delta u \\
%u(0,\cdot) =g
%\end{cases}
%\] 
%if for any $\varphi \in C_c([0,T]\times \bbR^d)$ it holds
%\[
%\int dx u(t,x) \varphi (t,x)= \int dx g(x) \varphi(0,x) +\int _0^t ds \int _{\bbR^d }dx \,u(s,x) [ \partial_s \varphi + \D \varphi](s,x)\,.
%\]
%Could you please  give me a reference to cite assuring that such a weak solution $u$ must be unique?
%If you want, you can restrict to $u_0\geq 0$ and $u\geq 0$.
%%%%%%%%%%%%%%%%%%%%%%%%%%%%%%%%%%%%%%%%%%%%%
%%%%%%%%%%%%%%%%%%%%%%%%%%%%%%%%%%%%%%%%%%%%%%
%\newpage
%
\section{Set $\O_{\rm typ}$ of typical environments}\label{tipicone}
In this section  we describe  the set $\O_{\rm typ}$ of typical  environments $\o$ for which the properties stated in Theorem \ref{teo1} will  hold.
% Recall the sequence $\{\varphi_j\}$ of functions in $C^\infty_c(\bbR^d)$ given in Definition~\ref{tuono20}. 
 We denote by  $p^\e_{\o,t}(\cdot , \cdot)$  the transition probability kernel of $( \e X^\o _{\e^{-2} t})_{t\geq 0}$. Recall Definition \ref{tuono20}.

\begin{Definition}[Set $\hat \O$]  \label{acqua74} We define $\hat \O$ as the family of $\o\in \O$ such that 
\be\label{rinato}
\sum _{x\in \hat \o}  \sum _{y \in \hat \o:\e y \in B_r} c_x(\o) \int_0^\infty e^{- t} p^\e_{\o,t}(\e x, \e y) dt<+\infty
\en
for all $\e, r\in (0,+\infty)$, where $B_r:=\{x\in \bbR^d\,:\, |x|\leq r\}$.
% and  $p^\e_{\o,t}(\cdot , \cdot)$ is the transition probability kernel of $( \e X^\o _{\e^{-2} t})_{t\geq 0}$.
\end{Definition}
%%%%%%%%%%%%%%%%%%%%%%%%
\begin{Definition}\label{buddha}(Set $\O_{\rm typ}$)
%The set $\O_{\rm typ} $ is given by the environments $\o \in \O$ satisfying the following conditions:
%%%%%%%%%%%%%%%%%%%%%%%%%%%%%
%\begin{itemize}
%\item[(P1)]  $\o \in \O_*$ (see Definition~\ref{def_omega_*});
%\item[(P2)]  $\o\in  \O_\sharp$ (see Proposition~\ref{replay});
%\item[(P3)]  $\o$ satisfies \eqref{claudio2} with $\psi(r):=1/(1+ r^{d+1})$;
%\item[(P4)]  $\o\in \cA[1]$ (see Proposition~\ref{prop_ergodico});
%\item[(P5)] $\o \in \hat \O$ (see Definition~\ref{acqua74}).
% \end{itemize}
%%%%%%%%%%%%%%%%%%%%%%%%%%%%
 The set $\O_{\rm typ} $ is given by the environments $\o \in \cA[1]\cap  \O_\sharp \cap \O_*
 \cap \hat \O $ satisfying
 \eqref{claudio2}  (see respectively  Proposition~\ref{prop_ergodico}, Proposition~\ref{replay}, 
 Definition~\ref{def_omega_*}  and  Definition~\ref{acqua74}).
%  \begin{itemize}
%\item[(P1)]  $\o \in \O_*$ (see Definition~\ref{def_omega_*});
%\item[(P2)]  $\o\in  \O_\sharp$ (see Proposition~\ref{replay});
%\item[(P3)]  $\o$ satisfies \eqref{claudio2} with $\psi(r):=1/(1+ r^{d+1})$;
%\item[(P4)]  $\o\in \cA[1]$ (see Proposition~\ref{prop_ergodico});
%\item[(P5)] $\o \in \hat \O$ (see Definition~\ref{acqua74}).
% \end{itemize}
\end{Definition}

\begin{Remark}\label{cipcip}
Due to Proposition \ref{prop_ergodico}, for any $\o\in \O_{\rm typ}$  we have $\lim_{\e \da 0} \mu^\e_\o(\varphi)= \int_{\bbR^d} \varphi(x) m dx $ for all $\varphi\in C_c(\bbR^d)$. 
% In particular, in Section \ref{ida} we will apply the above limit with $\varphi$ of the form $|G|$, $|G^{(\l)}|$, $(G^{(\l)})^2$, $| \nabla\cdot D \nabla G|$, where $G\in \{\varphi_j\}$ and $\l=1$.G^{(\l)}
\end{Remark}
Given $G\in \rosso{\{\varphi_j \}_{j\in \bbN}}$ and $\l>0$, we set $G^{(\l)} := \l G- \nabla\cdot D \nabla G $. Moreover,  we denote by $G_{\o, \l}^\e\in \cD(\bbL^\e_\o) $ the unique solution of   $\l  G_{\o,\l}^{\e}-\bbL^\e_\o G_{\o,\l}^{\e}= G^{(\l)}$  in $\bbL^2(\mu_\o^\e)$.  As  $G_{\o, \l}^\e = \int_0^\infty e^{- \l t} P^\e_{\o,t} G ^{(\l)} dt$ (for the notation see Section \ref{figlio_stress}), we have the integral representation 
\be \label{orbite}G_{\o, \l}^\e (\e x)= \sum _{y \in \hat \o}  \int_0^\infty e^{- \l t} p^\e_{\o,t}(\e x, \e y) G^{(\l)}(\e y)dt\,,\qquad \forall x \in \hat\o \,.
\en
\begin{Remark}\label{origano} If $\o\in \O_{\rm typ}$, then  for all $G\in \rosso{\{\varphi_j\}_{j\in \bbN}}$,  $\e>0$ and $\l=1$ it holds 
  \be\label{eriksen} \sum _{x\in \hat \o} c_x(\o) |G_{\o, \l}^\e(\e x)|<+\infty \text{ and }\sum _{x\in \hat \o} c_x(\o) G_{\o, \l}^\e(\e x)^2<+\infty\,.
  \en
  Indeed, by \eqref{orbite} we get   that $\| G_{\o, \l}^\e \|_\infty \leq \|G^{(\l)}\|_\infty$. Hence, one has just to check the first bound in \eqref{eriksen}, which follows from \eqref{rinato} and \eqref{orbite} as $G^{(\l)}\in C^\infty_c(\bbR^d)$.
   \end{Remark}

%%%%% (P4)(P4)(P2)(P3)(P4)

\begin{Proposition}\label{udine}  $\O_{\rm typ}$ is measurable,  translation invariant   and $\cP(\O_{\rm typ})=1$.
\end{Proposition}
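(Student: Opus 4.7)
Split $\O_{\rm typ}=\cA[1]\cap\O_\sharp\cap\O_*\cap\hat\O\cap\{\o:\eqref{claudio2}\text{ holds}\}$. Four of the five factors are already known to be translation invariant, measurable, and of full $\cP$-measure: $\cA[1]$ by Proposition \ref{prop_ergodico}, $\O_\sharp$ and the set $\{\eqref{claudio2}\}$ by Proposition \ref{replay} together with the remark following it, and $\O_*$ by the comments after Definition \ref{def_omega_*}. Since a finite intersection preserves all three properties, the only nontrivial task is to establish them for $\hat\O$.

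The first step is to recast \eqref{rinato} in terms of the unscaled walk. Writing $q^\o_s(x,y)$ for the transition kernel of $X^\o_s$ on $\hat\o$, the relation $p^\e_{\o,t}(\e x,\e y)\propto q^\o_{\e^{-2}t}(x,y)$ (with a normalizing factor depending only on $\e$), the change of variables $s=\e^{-2}t$, and the reversibility $q^\o_s(x,y)=q^\o_s(y,x)$ supplied by (A5) reduce \eqref{rinato}, up to a harmless $\e$-prefactor, to
\[
F_{M,\l}(\o):=\sum_{y\in\hat\o\cap B_M}g_\l(y,\o)<+\infty,\qquad g_\l(y,\o):=E_y\Big[\int_0^\infty e^{-\l s}c_{X^\o_s}(\o)\,ds\Big],
\]
for all $M,\l>0$. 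Since $F_{M,\l}$ is nondecreasing in $M$ and nonincreasing in $\l$, this uncountable family of conditions is equivalent to ``$F_{M,\l}<+\infty$ for all rational $M,\l>0$'', yielding measurability of $\hat\O$. Translation invariance follows from \eqref{base}--\eqref{montagna}: the walk on $\theta_g\o$ is the $\t_{-g}$-conjugate of the walk on $\o$, hence $g_\l(y,\theta_g\o)=g_\l(\t_g y,\o)$, and the ``for all $M$'' quantifier is insensitive to the resulting spatial shift.

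For the probability-one statement, the plan is a Campbell/Palm computation. By (A4) one has $c_x(\o)=c_0(\theta_{g(x)}\o)$ for $x\in\hat\o$ (with the analogous identity using $\b$ in the $\bbG=\bbZ^d$ case), and starting the walk at $y\in\hat\o$ on $\o$ is conjugate via the shift $\t_{-g(y)}$ to starting at $0$ on $\theta_{g(y)}\o$. Combining the two identities gives $E_y[c_{X^\o_s}(\o)]=\phi_s(\theta_{g(y)}\o)$ with $\phi_s(\o'):=E_0[c_0(\theta_{g(X_s^{\o'})}\o')]$ defined on $\O_0$. Campbell's formula applied to the Palm distribution $\cP_0$ (cf.~\eqref{palm_classica} and \eqref{Palm_Z}) then gives $\bbE\big[\sum_{y\in\hat\o\cap B_M}E_y[c_{X^\o_s}(\o)]\big]=m\,\ell(B_M)\,\bbE_0[\phi_s]$. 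The crucial input now is the $\cP_0$-invariance of the environment seen from the walker---a standard consequence of reversibility (A5) used repeatedly in \cite{Fhom3}---which yields $\bbE_0[\phi_s]=\bbE_0[c_0]=\bbE_0[\l_0]$. Multiplying by $e^{-\l s}$, integrating in $s$ and invoking Fubini, one obtains
\[
\bbE[F_{M,\l}]=\frac{m\,\ell(B_M)\,\bbE_0[\l_0]}{\l}<+\infty
\]
thanks to Assumption (A7). Hence $F_{M,\l}<+\infty$ $\cP$-a.s.\ for each rational pair $(M,\l)$, and intersecting countably many full-measure sets concludes $\cP(\hat\O)=1$.

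The main obstacle is the identity $\bbE_0[E_0[c_{X^\o_s}(\o)]]=\bbE_0[\l_0]$: it bundles the two pillars on which the whole argument rests, namely the equivariance \eqref{base}--\eqref{montagna} (used to trade shifts of the starting point for shifts of the environment) and the $\cP_0$-invariance of the environment viewed from the walker (which itself hinges on reversibility (A5)). Once this identity is in place, the remaining ingredients---Campbell's formula, Fubini, and the countable reduction of the uncountable parameter family---are routine.
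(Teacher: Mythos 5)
Your proof is correct and follows the same architecture as the paper's: the factors $\cA[1]$, $\O_\sharp$, $\O_*$ and the set where \eqref{claudio2} holds are dispatched by citation; measurability of $\hat \O$ is obtained by reducing the uncountable parameter family to a countable one via monotonicity (the paper performs the equivalent reduction to rational $r,\e$ through a comparison of resolvents at different scales, which is the same as your monotonicity in $\l=\e^2$); translation invariance comes from the shift-conjugation identities \eqref{base}--\eqref{montagna} (note these hold only on the full-measure set $\tilde \O\supset \O_*$, which is why the paper proves invariance of $\hat \O\cap \O_*$ rather than of $\hat \O$ alone --- a cosmetic point, since $\O_{\rm typ}$ is intersected with $\O_*$ anyway); and the full-measure statement is a Campbell/Palm computation reducing to integrability of a Palm expectation. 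The one place you genuinely diverge is the final step: you invoke exact stationarity of $\cP_0$ for the environment viewed from the walker to get $\bbE_0\bigl[E_0[c_0(\bar\o_s)]\bigr]=\bbE_0[c_0]$ at each fixed time $s$ and then integrate $e^{-\l s}\,ds$, whereas the paper only extracts reversibility and ergodicity of $\cP_0$ from \cite{Fhom3}, applies the $L^1$-Birkhoff ergodic theorem to obtain the Ces\`aro limit \eqref{piero88}, and closes with the summability bound $\bbE_0[F]\le C\sum_n e^{-n/2}a_n$. Your shortcut is legitimate --- reversibility plus conservativeness (non-explosion) of the environment process gives invariance, and the identity then extends to the nonnegative $L^1$ function $c_0$ by monotone approximation --- and it yields the cleaner closed form $\bbE[F_{M,\l}]=m\,\ell(B_M)\,\bbE_0[\l_0]/\l$; the paper's Ces\`aro detour buys nothing beyond staying closer to the statement literally proved in \cite{Fhom3}.
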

\begin{proof}The sets 
   $\cA[1]$, $ \O_\sharp$,  $\O_*$
 are translation invariant measurable sets of  $\cP$--probability one as stated
  in Proposition \ref{prop_ergodico}, in Proposition \ref{replay} 
and  
  after Definition \ref{def_omega_*},  respectively.
The same holds for the set of $\o$'s satisfying \eqref{claudio2}  as stated after  Proposition \ref{replay}.

To conclude it is enough to 
 show that  $\hat \O\cap \O_*$  is  a   translation invariant measurable set with $\cP(\hat \O\cap \O_*)=1$.  
For what concerns measurability, it is enough to  show that $\hat \O$ is measurable. Trivially, in Definition \ref{acqua74} one can restrict to $r\in \bbQ\cap (0,+\infty)$. 
It is also simple  to check that  one can restrict also to  $\e\in \bbQ\cap (0,+\infty)$ by using that, given $0<\e_*<\e$ with $\e_*\in \bbQ$, it holds
 \begin{multline}
    \int_0^\infty e^{-   t} p^\e_{\o,t}(\e x, \e y) dt=
    \int_0^\infty e^{-   t} p^1_{\o,\e^{-2}t}( x, y) dt
  =   \e^2 \int_0^\infty e^{-  \e^2  s} p^1_{\o,s}( x, y) ds \\
 \leq \e^2 \int_0^\infty e^{-  \e_* ^2  s} p^1_{\o,s}( x, y) ds
  =  (\e/\e_* )^2 \int_0^\infty e^{-  t} p^{\e_*}_{\o,t}(\e_* x, \e_* y) dt \,.
 \end{multline}
 Since in  in Definition \ref{acqua74} one can restrict to $r,\e \in \bbQ\cap (0,+\infty)$ (hence to a countable set of parameters), we conclude that $\hat\O$ is measurable.

To prove that  $\cP(\hat \O\cap \O_*)=1$, it is  enough to prove that, given $r,\e\in \bbQ\cap (0,+\infty) $, it holds $H(\o)<+\infty$ for $\cP$--almost all $\o\in \O_*$ (recall that $\cP(\O_*)=1$), where $H$ denotes the l.h.s. of \eqref{rinato}.
 
We treat the case $\bbG=\bbR^d$ (the case $\bbG=\bbZ^d$ can be reduced to the  present  one by the transformation described in \cite[Section~6]{Fhom3}). We also assume  that $V=\bbI$ in \eqref{trasferta2}, w.l.o.g. at cost to apply an affine transformation. This implies that     $\t_g x= x+g$  and $g(x)=x$
 (see \eqref{trasferta2} and \eqref{attimino}). 
    Given $\o\in \O_*$ and  given $x, y \in \hat \o$ (see Definitions \ref{omesso} and \ref{def_omega_*}), we then have  $p^\e_{\o,t}(\e x, \e y)=p^\e_{\o,t}(\e y, \e x) = p^\e_{\theta_y\o,t}(0, \e (x-y))$, $c_x(\o)=c_{x-y}(\theta_y\o)$ and $\widehat{\theta_y\o}=\t_{-y}\hat \o=\hat \o-y$.
  We set $F(\xi):=\sum_{z\in \hat \xi} c_z(\xi) \int_0^\infty e^{-  t}p^\e_{\xi,t} (0, \e z) dt$ for $\xi\in \O_0=\{\xi \in \O\,:\, 0\in \hat\xi \}$. 
 By the above observations, given $\o\in \O_*$  we get
\be
H(\o) = \sum _{y \in \hat \o:\e y \in B_r}   \sum _{z\in  \widehat{\theta _y \o}} c_z(\theta_y\o) \int_0^\infty e^{-  t} p^\e_{\theta_y\o,t}(0, \e z)  dt= \sum _{y \in \hat \o:\e y \in B_r}F(\theta_y\o)\,.
\en
Hence, to prove that $\cP(\hat\O\cap \O_*)=1$,  we just need to show  that the last expression is finite $\cP$--a.s. To this aim we apply Campbell's identity (see \cite[\verde{Appendix~B}]{Fhom3}): for any 
nonnegative   measurable function $f$ on $ \bbR^d\times \O_0$ it holds
  \begin{equation}\label{campanello}
 \int_{\bbR^d}dx  \int _{\O_0} d\cP_0 ( \o) f(x, \o) =\frac{1}{m } \int _{\O}d\cP(\o)  \sum_{x\in \hat \o} f(x , \theta_x \o) 
 \end{equation}
 (we recall that $\cP_0$ denotes the Palm distribution associated to $\cP$).
Taking $f(x,\o ):= \mathds{1}_{B_r} (\e x) F(\o)$  we get
$\e^{-d} \ell (B_r) \bbE_0[ F] = m^{-1} \bbE[ \sum _{x\in \hat \o: \e x \in B_r} F (\theta _x \o)]$,  where 
$\bbE_0,\bbE$ denote the expectation w.r.t.~$\cP_0$, $\cP$ respectively and  $\ell(B_r)$ denotes the Lebesgue measure of the ball $B_r$.
Hence, to conclude it is enough  to show that $\bbE_0[F]<+\infty$.
  As \verde{it} can be easily deduced from the proof of   \cite[Lemma 3.5]{Fhom3}, the Palm distribution $\cP_0$ is a reversible and ergodic (w.r.t. time shifts) distribution  for  the environment viewed from the random walk, i.e.  for the process $(\theta_{X_t^\o }\o) _{t\geq 0}$. Indeed, in the proof of   \cite[Lemma 3.5]{Fhom3} we considered the jump chain associated to 
 the environment viewed from the random walk and proved that $\bbE[c_0]^{-1} c_0(\o) d \cP_0(\o) $ is reversible and ergodic for the  jump chain.
  As accelerating 
time does not change the class of reversible and ergodic distributions, we get that $\cP_0$ is a reversible and ergodic distribution also
  for  the process  $(\bar{\o}_t)_{t\geq 0}$,  $\bar{\o}_t:=\theta_{ X^\o _{\e^{-2} t}} \o $. 
  On the other hand, by  (A7), $\bbE_0 [ c_0]=\bbE_0[\l_0]<+\infty$. Hence, by the $L^1$--Birkhoff ergodic theorem, we get 
$\lim_{t\to +\infty} \frac{1}{t} \int_0^t   c_0( \bar{\o}_s)ds= \bbE_0[ c_0] $ in $L^1(\bbP_{\cP_0})$, where  $\bbP_{\cP_0}$ is the law of the random path $(\bar{\o}_t)_{t\geq 0}$ when the starting configuration $\o$ is sampled with distribution $\cP_0$.  As the above limit implies the limit of expectations and $c_0(\theta_z\o)=c_z(\o)$, we have 
\be\label{piero88}
\lim_{t\to +\infty} \frac{1}{t} \sum_{z\in \hat \o}\int \cP_0(d\o) \int_0^t p^\e_{\o,s} (0, \e z) c_z(\o) ds = \bbE_0 [ c_0]\,.
\en
Now note that, for some positive constant $C$, it holds 
\be
\begin{split}
F(\o)
%=\sum_{z\in \hat \o}\int_0^\infty e^{- \l t}p^\e_{\o,t} (0, \e z) c_z(\o) dt
%& \leq \sum_{z\in \hat \o}\sum_{n=0}^\infty e^{- \l n } \int _{n}^{n+1} p^\e_{\o,t} (0, \e z) c_z(\o) dt\\
%& \leq C \sum_{z\in \hat \o}\sum_{n=0}^\infty \frac{e^{- \l n/2 }}{n+1}\int _{n}^{n+1} p^\e_{\o,t} (0, \e z) c_z(\o) dt\\
 \leq C \sum_{n=0}^\infty  \frac{e^{-   n/2 }}{n+1}  \sum_{z\in \hat \o} \int _{n}^{n+1} p^\e_{\o,t} (0, \e z) c_z(\o) dt\,.
\end{split}
\en
Hence, setting $ a_n:=  \frac{1}{n+1} \int \cP_0(d \o)  \sum_{z\in \hat \o} \int _{0}^{n+1} p^\e_{\o,t} (0, \e z) c_z(\o) dt$, we get  $\bbE_0[ F] \leq C \sum_{n=0}^\infty e^{-  n/2 } a_n$.
 By \eqref{piero88} we have $\lim_{n\to \infty} a_n = \bbE_0 [ c_0]<+\infty$, hence the series $ \sum_{n=0}^\infty e^{-  n/2 } a_n$ is finite, thus implying that $\bbE_0[ F] <+\infty$. This concludes the proof that $\cP(\hat \O\cap \O_*)=1$

 We now show 
that $\hat \O\cap \O_*$ is 
 translation invariant (always restricting to $\bbG=\bbR^d$ and $V=\bbI$).    Take  $ \e, r>0$, $g\in \bbG$ and   $\o \in \hat \O\cap \O_*  $. 
   Then we have
  % (let $a=x+g$, $b=y+g$)
\begin{equation*}
\begin{split}
H(\theta_g \o) &= \sum_{x\in \widehat{\theta_g \o}}  \sum _{y \in \widehat{\theta_g \o}: \e y \in B_r }  c_x(\theta_g \o)
 \int_0^\infty e^{-  t} 
 p^\e_{\theta_g\o,t}(\e x, \e y)dt 
 \\
& \leq  \sum_{a\in \hat \o}\sum _{b \in \hat \o: \e b \in B_{r+ \e|g|}}  c_a(\o) \int_0^\infty e^{-  t} p^\e_{\o,t}(\e a, \e b)dt <+\infty\,.
  \end{split}
\end{equation*}
This proves that $\theta_g (\hat \O\cap \O_*)\subset \hat \O$ for all $g\in \bbG$.
Using that $\O_*$ is translation invariant, it is then trivial to conclude that 
   $\theta_g (\hat \O\cap \O_*)\subset \hat \O \cap \O_*$ for all $g \in \bbG$, which implies the translation invariance of $\hat \O\cap \O_*$.
  \end{proof}
%%%%%%%%%%%%%%%%%%%%%%%%%%%%%%%%%%
%\subsection{Existence of the process and universal graphical construction}\label{sec_GC}
 %     
%It is standard to check that for $\cP$--a.a. $\o$  the random trajectory  $\bigl(  \eta_t [\o, \cK]\bigr)_{t\geq 0}$ (where the randomness comes from $\cK$) is a simple  exclusion process on $\hat \o$ with initial configuration $\eta_0$ and  generator  $\cL_\o$ given by \eqref{mamma} and \eqref{mahmood} on local functions.
%The above construction show that the simple exclusion process is well posed  for all $\o\in \O_{\rm}$. 

%\rosso{\club Ma dove uso (A6)? dove (P5)?} 

%%%%%%%%%%%%%%%%%%%%%%%%%%%%%%%%%%%%%%%%%%%%%
%%%%%%%%%

% 
%%%%%%%%%%%%%%%%%%%%%%%%%%%%%%%%%%%
\section{Proof of Theorem \ref{teo1}}\label{ida}
%%%%%%%%%%%%%%%%%%%%%%%%%%%%%%%%%%%%%%%
In Proposition  \ref{prop_SEP} we discussed the  existence  
 of the simple exclusion process for $\o\in \O_*\supset \O_{\rm typ}$. In Proposition \ref{udine} we 
  showed that $\O_{\rm typ}$ is a translation invariant measurable set of  $\cP$-probability one.
 To get the hydrodynamic behavior we will proceed as follows. We fix $\o\in \O_{\rm typ}$. We consider the random path  $(\pi^\e_{\o,t} [ \eta_\cdot])_{0\leq t\leq T}$  with $\eta_\cdot$  sampled according to $\bbP^\e_{  \o,\mathfrak{m}_\e }$.
We call $Q^\e$ its  law, which  is a probability measure on $D([0,T], \cM)$. Note that, to simplify the notation, $\o$ is understood in $Q^\e$. 
 We call $\rosso{Q^\star}$ the  law of the deterministic path $\bigl( \rho(x,t) dx \bigr)_{0\leq t\leq T}$ in 
$D([0,T], \cM)$  where $\rho(x,t)= P_t \rho_0(x)$.  To get Theorem \ref{teo1} it is enough to  prove that, for $\o\in \O_{\rm typ}$,  $Q^\e$ weakly converges to $\rosso{Q^\star}$. Indeed, this implies the convergence in probability of the random path  $(\pi^\e_{\o,t} [ \eta_\cdot])_{0\leq t\leq T}$  towards  $\bigl( \rho(x,t) dx \bigr)_{0\leq t\leq T}$. 
%We observe that this last path belongs to  $C([0,T], \cM)$. Indeed, given $\varphi \in C_c(\bbR^d)$, we have $\int dx \varphi(x) \rho(x,t)= \int dx P_t \varphi (x) \rho_0 (x)dx$ and therefore 
%$
%\bigl|\int dx \varphi(x) \rho(x,t)-\int dx \varphi(x) \rho(x,s)
%\bigr|
%\leq | P_t \varphi (x) - P_s \varphi (x)| 
%$. The last difference goes to zero as $t\to s$ by dominated convergence and the time--continuity  of $P_t \varphi(x)$. 
As  $\bigl( \rho(x,t) dx \bigr)_{0\leq t\leq T}\in C([0,T], \cM) $, 
 the above convergence in probability implies \eqref{pasqualino} (cf. \cite[page 124]{B}).

 By adapting  the  method of the corrected empirical  process  of \cite{GJ} to the $L^2$-context and the unbounded domain $\bbR^d$, we prove the tightness of $\{Q^\e\}$ in Section \ref{sec_teso} (since   $D([0,T], \cM)$ is a Polish space, tightness is here equivalent to relative compactness, cf. \cite[Theorems~5.1~and~5.2]{B}).  
The homogenization  result  used in this part  is given  by \eqref{flavia}.
 
After getting tightness,  one can proceed in two ways. 
 A first route is to show that all limit points of $\{Q^\e\}$ must  equal \rosso{$Q^\star$} since concentrated on continuous $\cM$--valued paths solving in a weak sense the hydrodynamic equation with initial value $\rho_0(x) dx$ and satisfying suitable mass bounds on balls. Then one can  invoke the uniqueness result for these weak solutions given by Lemma \ref{timau}.
 This is the route followed in Section \ref{silenzioso} in the same spirit of \cite{GJ}. Again,  the homogenization  result  used here is given  by \eqref{flavia}.

 We now describe the second route.  Due to  tightness and by \cite[Theorem 13.1]{B}, to prove that    $Q^\e \Rightarrow \rosso{Q^\star}$ it is  enough to show  the finite dimensional distribution convergence and, by a union bound, the convergence for the distribution at a fixed time\verde{,} i.e. that 
 for  any  $t\geq 0$,  $\d>0$ and $\varphi \in C_c(\bbR^d)$ it holds 
\be\label{pasqualino_fisso}
\lim_{\e\da 0} \bbP^\e _{  \o,\mathfrak{m}_\e } \Big(\Big|  \e^d \sum_{x \in \hat \o} \varphi (\e x) \eta_t( x) - \int _{\bbR^d} \varphi(x) \rho(x,t) dx\Big| >\d 
\Big)=0\,.
\en
This can be obtained by a completely autonomous analysis with two main ingredients: an extension to our context of  Nagy's representation    of the simple exclusion process (based on duality with the random walk)   and  the homogenization  limit \eqref{marvel2}. Note that here one does not need Lemma \ref{timau}.
We have discussed this second route in Appendix \ref{sec_passetto}.  %
\subsection{Relative compactness  of the empirical measure} \label{sec_teso}
%As in Section \ref{HL_EP}
%we consider the random path  $(\pi^\e_{\o,t} [ \eta_\cdot])_{0\leq t\leq T}$  with $\eta_\cdot$  sampled according to $\bbP^\e_{  \o,\mathfrak{m}_\e }$ (see \eqref{empirico}).
%We call $Q^\e$ its the law, which  is a probability measure on $D([0,T], \cM)$.
%We aim to prove here the relative compactness  of the empirical measure in distribution, i.e.  that the family  $\{Q^\e\}$ is relatively compact in $D([0,T], \cM)$ when $\e$ varies along any arbitrary  deterministic  sequence $\e_n \downarrow 0$. 
To simplify the notation, we fix once and for all a sequence $\{\e_n\}$ of positive numbers with $\e_n \da 0$. In what follows all limits $\e\da 0$ have to be thought along the above sequence $\{\e_n\}$.
By  Lemma \ref{proiettore}, to prove  that the family  $\{Q^\e\}$ is relatively compact as $\e\da 0$ 
it is enough to prove that, given $G\in \{\varphi_j\}_{\rosso{j\in \bbN}}$ (cf. Definition~\ref{tuono20}), the $\e$--parameterized laws of the   random paths  $(\pi^\e_{\o,t}(G))_{0\leq t\leq T}$ form  a relatively compact family of probability measures on $D([0,T], \bbR)$ as $\e\da 0$. Note that we have  dropped  from the notation  the dependence on the path $\eta_\cdot $.
By \cite[Theorem~1.3, \verde{Chapter~4}]{KL} and Aldous' criterion given in \cite[Proposition~1.6, \verde{Chapter~4}]{KL}, it is enough to prove that 
\begin{itemize}
\item[(i)] for every $t\in [0,T]$ and every $\b>0$, there exists  $\ell>0$ such that $\varlimsup_{\e \da 0} \bbP_{\o,\mathfrak{m}_\e } ^\e (| \pi ^\e_{\o,t}(G) |> \ell) \leq \b$; 
\item[(ii)] calling $\mathfrak{I}_T$ the family of stopping times bounded by $T$  w.r.t. to the filtration $(\cF_t)_{t\geq 0}$,  with  $\cF_t:=\s \{ \eta_s: 0 \leq s \leq t\}$,   for any $\b>0$ it holds %kuka
\be
\lim_{\g \downarrow 0} \varlimsup_{\e \da 0}\sup_{
\substack{\t \in \mathfrak{I}_T\\ \theta\leq \g}}\bbP^\e _{\o,\mathfrak{m}_\e } \left( \left| \pi ^\e_{\o,\t}(G)-\pi ^\e_{\o,(\t+\theta)\wedge T }(G)\right | >\b\right)=0\,.
\en
\end{itemize}
%\rosso{\club Direi che ho stopping times w.r.t. to the filtration $(\cF_t)_{t\geq 0}$ with $\cF_t$ being the $\s$--algebra generated by  $\{ \eta_s: 0 \leq s \leq t\}$.}
Item (i) gives no problem. Indeed, $ |\pi ^\e_{\o,t}(G) |\leq \mu_\o^\e (|G|)  \to \int dx m |G(x)| dx$ as 
 $\o \in \O_{\rm typ} $ (cf.~Remark \ref{cipcip}). Item (ii) is more delicate and can be treated by  the  corrected empirical measure. To this aim we fix $\l>0$ 
 (let us take $\l=1$ as in Remark \ref{origano}) and define $G^{(\l)}\in C^\infty_c(\bbR^d) $ as
 \be\label{glambda} G^{(\l)} := \l G- \nabla\cdot D \nabla G \,,\en
where $D$ is the effective homogenized matrix (see Definition \ref{def_D}).
As in Section \ref{tipicone},
 we define $G_{\o,\l}^{\e}$ as the unique element of $\cD(\bbL^\e_\o)\subset L^2(\mu ^\e_\o)$ such that 
\be\label{carnevale}
\l  G_{\o,\l}^{\e}-\bbL^\e_\o G_{\o,\l}^{\e}= G^{(\l)} \qquad \text{ in }L^2(\mu ^\e_\o)\,.
 \en
 %$G^{(\l)}$ and $ G_{\o,\l}^{\e}$ are the functions appearing in property (P4) and (P5) inside Definition~\ref{buddha}.
  By using the resolvent operators $R_\l$ and $R^\e_{\o,\l}$ defined in Section \ref{figlio_stress}, we can restate the above definitions as 
 \be\label{fuochino}
 G=R_\l G^{(\l)}\,, \qquad   G_{\o,\l}^{\e}=  R_{\o,\l}^{\e}G^{(\l)}\,.
 \en
 We point out some standard bounds which we will be useful below. By  
 taking the scalar product  with $G_{\o,\l}^{\e}$ in   the massive Poisson equation 
 \eqref{carnevale} and  by applying Schwarz inequality, we get that 
 \begin{align} 
 & \l \| G_{\o,\l}^{\e}\|_{L^2(\mu^\e_\o)}
  \leq  \| G^{(\l)} \|_{L^2(\mu^\e_\o)}\,,  \label{aperitivo1}\\
&  \la G_{\o,\l}^{\e}, -\bbL^\e_\o G_{\o,\l}^{\e} \ra _{L^2(\mu^\e_\o)}   \leq \la G_{\o,\l}^{\e}, 
G^{(\l)} \ra _{L^2(\mu^\e_\o)}\leq \l^{-1}\|G^{(\l)} \| ^2 _{L^2(\mu^\e_\o)}  
    \,.\label{aperitivo2}
  \end{align}  
  We also note that 
  \be\label{aperitivo3}
  \la G_{\o,\l}^{\e}, -\bbL^\e_\o G_{\o,\l}^{\e} \ra _{L^2(\mu^\e_\o)}  =
  \frac{\e^{d-2} }{2}  \sum _{x\in \hat \o} \sum _{y \in \hat \o} c_{x,y}(\o)\bigl[ G_{\o,\l}^{\e}  (\e x)- G_{\o,\l}^{\e} (\e y)\bigr]^2\,.
  \en
  To justify \eqref{aperitivo3} we proceed as follows. For any $f\in \cD( \bbL^\e_\o)\subset \cD( \sqrt{- \bbL^\e_\o})$ we have $ \la f, - \bbL^\e_\o f \ra _{L^2(\mu^\e_\o)}= \|\sqrt{-\bbL^\e_\o} f\|^2_{L^2(\mu^\e_\o)}=
  \cE_\o^\e (f,f)  $, the last identity being discussed in Section \ref{figlio_stress}. By taking $f=G_{\o,\l}^{\e}$, we then get \eqref{aperitivo3}.

  We now use our homogenization result for the resolvent convergence. Indeed, $\O_{\rm typ}\subset \O_\sharp$ and  \eqref{fuochino} and \eqref{flavia} imply that   \be\label{ratto_gatto}
  \lim _{\e \da 0} \e^d \sum_{x\in \hat \o} | G(\e x) - G_{\o,\l}^{\e}  (\e x)| =0 \,.
  \en
 % The above result also implies that $G_{\o,\l}^{\e} \in L^1(\mu^\e_\o)$ (at least for $\e$ small, to which we restrict). 
    As $  |   \pi^{\e} _{\o,t}(G) -\pi^{\e} _{\o,t}(G_{\o,\l}^{\e})  |\leq \e^d \sum_{x\in \hat \o} | G(\e x) - G_{\o,\l}^{\e}  (\e x)|$, we get 
  \be \label{violoncello}
 \lim _{\e \da 0} \bbP^\e_{\o, \mathfrak{m}_\e} \Big( \sup_{0\leq t\leq T} \bigl|   \pi^{\e} _{\o,t}(G) -\pi^{\e} _{\o,t}(G_{\o,\l}^{\e})  \bigr| >\d
 \Big) =0\,.
 \en

% \begin{Lemma}\label{violoncello} Let \rosso{$\o \in \O_{\rm typ}$} and 
% let $G\in C^\infty _c (\bbR^d)$. Then, given $\d>0$, it holds
%  \be 
% \lim _{\e \da 0} \bbP^\e_{\o, \mathfrak{m}_\e} \left( \sup_{0\leq t\leq T} \bigl|   \pi^{\e} _{\o,t}(G) -\pi^{\e} _{\o,t}(G_{\o,\l}^{\e})  \bigr| >\d
% \right) =0\,.
% \en
%\end{Lemma}
 By \eqref{violoncello},  to get  Item (ii), it is enough to prove the same result with $G$ replaced by  $G_{\o,\l}^{\e}$, i.e. that for any $\b>0$ it holds
\be\label{il_sole!}
\lim_{\g \downarrow 0} \varlimsup_{\e \da 0}\sup_{
\substack{\t \in \mathfrak{I}_T\\ \theta\leq \g}}\bbP^\e_{\o,\mathfrak{m}_\e } \left( \left| \pi ^\e_{\o,\t}(G_{\o,\l}^{\e})-\pi ^\e_{\o,(\t+\theta)\wedge T }(G_{\o,\l}^{\e})\right | >\b\right)=0\,.
\en

We have now to deal  with  the Dynkin martingale associated to $\pi^{\e} _{\o}(G_{\o,\l}^{\e})$. We will use below Lemmas \ref{ringo} and  \ref{star}. Let us check their hypotheses. Trivially, $G_{\o,\l}^{\e}\in  \cD(\bbL^\e_\o)$ (by definition).
We claim that 
%\be%\label{sferico}
\[G_{\o, \l}^\e \in L^1 (\mu^\e_\o)\,,\;\; \sum _{x\in \hat \o} c_x(\o) |G_{\o, \l}^\e(\e x)|<+\infty\,,\;\; \sum _{x\in \hat \o} c_x(\o) G_{\o, \l}^\e(\e x)^2 <+\infty\,.
\]
%\en
 The last two bounds follow from Remark \ref{origano}. To derive the first one we observe that, by  the  integral representation  \eqref{orbite} and the symmetry of $p^\e_{\o,t}(\cdot, \cdot)$,   it holds   $\| G_{\o, \l}^\e \|_{L^1(\mu^\e_\o)}  \leq \|G^{(\l)}\|_{L^1(\mu^\e_\o)}  /\l<+\infty$. 

By applying Lemma \ref{ringo}, we get that  $\pi^{\e} _{\o}(G_{\o,\l}^{\e}) $ corresponds to an absolutely convergent series in $C(\{0,1\}^{\hat \o})$ and, as function of $\eta$,  belongs to the domain of $\cL_\o$. 
This observation allows us to introduce  the Dynkin martingale
\be \label{sirenetta}
M^\e_{\o,t} :  = \pi^{\e} _{\o,t}(G_{\o,\l}^{\e})  - \pi^{\e} _{\o,0}(G_{\o,\l}^{\e})  
-\e^{-2} \int _0 ^t \cL_\o \left(  \pi^{\e} _{\o}(G_{\o,\l}^{\e} ) \right)(\eta_s) ds\,.
% \e^{d} \sum _{x\in \hat \o}\int_0^t   \eta_s(x)   \left( \l G_{\o,\l}^{\e} - G^{(\l)}\right )  (\e x)ds\,.
\en
By  \eqref{airone26} and  \eqref{carnevale},  we can rewrite $M^\e_{\o,t}$  as
\be \label{tremo_bis}
 M^\e_{\o,t}= \pi^{\e} _{\o,t}(G_{\o,\l}^{\e})  - \pi^{\e} _{\o,0}(G_{\o,\l}^{\e})  - 
 \e^{d} \sum _{x\in \hat \o}\int_0^t   \eta_s(x)   \left( \l G_{\o,\l}^{\e} - G^{(\l)}\right )  (\e x)ds\,.
\en

We can now prove \eqref{il_sole!}. 
Due to \eqref{tremo_bis} it is enough to prove that
\begin{align}
& \lim_{\g \downarrow 0} \varlimsup_{\e \da 0}\sup_{
\substack{\t \in \mathfrak{I}_T\\ \theta\leq \g}}\bbP^\e_{\o,\mathfrak{m}_\e } \Big(
 \e^{d} \theta \sum _{x\in \hat \o}    \left| \l G_{\o,\l}^{\e} - G^{(\l)}\right |  (\e x)
 >\b/2\Big)=0\,,\label{il_sole1}\\
& \lim_{\g \downarrow 0} \varlimsup_{\e \da 0}\sup_{
\substack{\t \in \mathfrak{I}_T\\ \theta\leq \g}}\bbP^\e_{\o,\mathfrak{m}_\e } \Big( \Big |M^\e_{\o,(\tau+\theta)\wedge T}- M^\e_{\o,\t}  
\Big | >\b/2\Big)=0\,.
 \label{il_sole2}
\end{align}

$\bullet$ \emph{Proof of \eqref{il_sole1}}.
The inequality inside \eqref{il_sole1} is indeed deterministic.
As $\theta\leq \g  \downarrow 0$, to prove \eqref{il_sole1}  it is enough to prove    that 
\be\label{freddo}
\varlimsup_{\e \da 0}  \e^{d}  \sum _{x\in \hat \o}    \left| \l G_{\o,\l}^{\e} - G^{(\l)}\right |  (\e x)<+\infty\,.
\en
We have already observed that $\| G_{\o, \l}^\e \|_{L^1(\mu^\e_\o)}  \leq \|G^{(\l)}\|_{L^1(\mu^\e_\o)} /\l$.  Then, to get \eqref{freddo} it is enough to apply  Remark \ref{cipcip}.
%Due to \eqref{ratto_gatto} and  (P4) in Definition \ref{buddha} we get   $\varlimsup_{\e \da 0}  \e^{d}  \sum _{x\in \hat \o}    \left| G_{\o,\l}^{\e}\right|  =\| G\|_{L^1(m dx)}$. Using also (P4)  and Remark \ref{tuono20} \club to  deal with $G^{(\l)}$, we get \eqref{freddo}.
%(P4)

%Let us prove \eqref{freddo}.
%As $\o \in \O_{\rm typ}$ (see (P4) in Definition \ref{buddha}),  we have  
%\be\label{cono1}
%{\| G^{(\l)} \|}^2 _{L^2(\mu^\e_\o)} =
% \e^{d}  \sum _{x\in \hat \o}    G^{(\l)}  (\e x) ^2\to \int dx \, m G^{(\l)}(x) ^2 dx<+\infty\,.
%\en
% On the other hand,  by
%\eqref{aperitivo2} we have 
% $\l \| G_{\o,\l}^{\e}\|_{L^2(\mu^\e_\o)}
%  \leq  \| G^{(\l)} \|_{L^2(\mu^\e_\o)}$. Therefore (using \eqref{cono1}) we conclude that $\varlimsup_{\e \da 0} \| G_{\o,\l}^{\e}\|_{L^2(\mu^\e_\o)}<+\infty$. By combing this limsup with \eqref{cono1} we get that   $\varlimsup_{\e \da 0}  \| \l G_{\o,\l}^{\e} - G^{(\l)}\|_{L^1(\mu^\e_\o)} \leq  
%  \varlimsup_{\e \da 0}  \| \l G_{\o,\l}^{\e} - G^{(\l)}\|_{L^2(\mu^\e_\o)} <+\infty$, thus impling 
%  \eqref{freddo}.

%
%
%by \cite[Theorem 1]{Fhom3} (which does not use (A9)) we have that $L^2(\mu^\e_\o)\ni G_{\o,\l}^{\e}   \to G \in L^2(mdx) $, and in particular $\varlimsup _{\e \da 0 }\| G_{\o,\l}^{\e}\| _{L^2(\mu^\e_\o)} <+\infty$. Hence, using also Schwarz inequality,  we have 
%\be
%\varlimsup_{\e \da 0} \e^{d}  \sum _{x\in \hat \o}    |G^{(\l)}  (\e x)|  \leq \varlimsup _{\e \da 0 }\| G_{\o,\l}^{\e}\| _{L^2(\mu^\e_\o)} <+\infty\,,
%\en
%thus allowing to conclude the proof of \eqref{freddo}.

\medskip

$\bullet$ \emph{Proof of \eqref{il_sole2}}. We write $\bbE^\e_{\o,\mathfrak{m}_\e }$ for the expectation w.r.t. $\bbP^\e_{\o,\mathfrak{m}_\e}$. 
We  bound the probability in \eqref{il_sole2} by  $(2/\b)^{2} \bbE^\e_{\o,\mathfrak{m}_\e } \left[ (M^\e_{\o,(\t+\theta)\wedge T} - M^\e_{\o,\t})^2 
\right]$.  Using that  $\t$ is a stopping time and 
 the form of the sharp bracket process   in Lemma \ref{star},  we get  (cf. \eqref{aperitivo2} and \eqref{aperitivo3})
 \begin{multline}\label{minecraft}
\bbE^\e_{\o,\mathfrak{m}_\e } \left[ (M^\e_{\o,(\t+\theta)\wedge T} - M^\e_{\o,\t})^2 
\right] \leq \theta \e^{2d-2}   \sum _{x\in \hat \o} \sum _{y \in \hat \o} c_{x,y}(\o)\bigl[ G_{\o,\l}^{\e}  (\e x)- G_{\o,\l}^{\e} (\e y)\bigr]^2\\
  = 2 \theta \e^d \la  G_{\o,\l}^{\e} , -\bbL^\e_\o G_{\o,\l}^{\e} \ra _{L^2(\mu^\e_\o)} \leq 2\theta  \e^d  \l^{-1} \| G^{(\l)}\|^2_{L^2(\mu^\e_\o)} \,.
 \end{multline}
 As  $\o \in \O_{\rm typ}$ (see  Remark \ref{cipcip}),    as $\e\da 0$ we have $\| G^{(\l)}\|^2_{L^2(\mu^\e_\o)}\to C_0:= \int dx \,m G^{(\l)}(x) ^2 $.
 In conclusion we have proved that the probability in \eqref{il_sole2} is bounded from above by 
 $  (2/\b)^{2}  2 \theta \e^d  \l^{-1} ( C_0+o(1)) $ as $\e\da 0$. This implies \eqref{il_sole2}.  
 
 \subsection{Characterization of the limit points}\label{silenzioso} Recall that $\o\in \O_{\rm typ}$ is fixed.
   Let $Q$ be any limit point  $\{Q^\e\}$ as $\e \da 0$. We claim that  $Q$ is concentrated on paths $\a\in D([0,T],\cM)$ satisfying the conditions of  Lemma \ref{timau} with $v_0=\rho_0$. Then, by applying Lemma \ref{timau}, 
 we can conclude that $Q=\d _{(\rho (x,t) dx)_{0 \leq t \leq T}}$, thus completing the proof of Theorem \ref{teo1}.

Let us prove our claim. Item (ii) in Lemma \ref{timau} follows from condition \eqref{marzolino}.  We move to Item (iv).
 We recall that, for any integer $\ell\geq 0$,   there is some $[0,1]$--valued function  $\varphi_{j_0}\in \{\varphi_j\}_{\rosso{j\in\bbN}}$ equal to $1$ on $B_\ell$ and equal to zero outside $B_{\ell+1}$.
Then, by  Remark \ref{cipcip},  we have for all $t\in [0,T]$ and for a suitable constant $C(d)$ depending only on  the dimension $d$ that 
\be\label{pico85}
\pi_{\o,t}^\e (\varphi_{j_0}) \leq \mu^\e_\o(\varphi_{j_0}) \stackrel{\e \da 0}{\to}  m\int \varphi_{j_0}(x) dx \leq  C(d) m \ell^d \,.
\en
Setting 
$H:=\{ \a \,:\, \sup_{0\leq t\leq T} \a_t  (\varphi_{j_0}) \leq   2 C(d) m  \ell^d \}$, we  get $\lim_{\e \da 0} \bbP^\e _{  \o,\mathfrak{m}_\e }(  \pi_{\o,\cdot}^\e \in H )=1$.  As $H$  is closed in 
$D([0,T],\cM) $, we conclude that $Q(H)=1$. \verde{By varying $\ell $ in  $\bbN$, this} implies Item (iv) in Lemma \ref{timau} with $\g=d$.

We move to Item (iii).
 By Doob's inequality and reasoning as in \eqref{minecraft} we get 
 \be \label{liuto}
 \bbP^\e_{\o, \mathfrak{m}_\e}( \sup _{t\in [0,T]} |M^\e_{\o,t}  | \geq  \d) \leq \d^{-2} \bbE^\e_{\o, \mathfrak{m}_\e}( (M^\e_{\o,T} )^2) \leq  2 \d^{-2} T  \e^d  \l^{-1} \| G^{(\l)}\|^2_{L^2(\mu^\e_\o)} \stackrel{\e\da0}{\to} 0 \,.
 \en 
 By 
  \eqref{glambda} and \eqref{ratto_gatto} (the latter is due to  \eqref{flavia} in Proposition \ref{replay}), we have 
% \begin{multline}\label{viola}
\be\label{viola}
 \sup_{0\leq t\leq T} \Big|\int_0^t   \pi^\e_{\o,s}  \left( \l G_{\o,\l}^{\e} - G^{(\l)}-  \nabla \cdot D \nabla G \right) ds
 \Big|  
 \leq
  T \e^{d} \sum _{x\in \hat \o} |  \l G_{\o,\l}^{\e} - \l G |  (\e x) \stackrel{\e \da 0}{\to} 0\,.
\en
%\end{multline}
At this point, by combining \eqref{violoncello}, \eqref{tremo_bis},  \eqref{liuto} and \eqref{viola} we get that 
% \beG^{(\l)}
% \lim _{\e \da 0} \bbP^\e_{\o, \mathfrak{m}_\e} \Big( \sup_{0\leq t\leq T} \bigl|   \pi^{\e} _{\o,t}(G) -\pi^{\e} _{\o,0}(G_{\o,\l}^{\e})-  \e^{d} \sum _{x\in \hat \o}\int_0^t   \eta_s(x)  \nabla \cdot D \nabla G  (\e x)ds    \bigr| >\d
% \Big) =0\,.
% \en
\be\label{razzo1}
\bbP^\e _{  \o,\mathfrak{m}_\e } \Big(
\sup_{0\leq t\leq T} | 
 \pi^{\e} _{\o,t}(G)  - \pi^{\e} _{\o,0}(G)  -\int_0^t    \pi^{\e} _{\o,s}(
  \nabla \cdot D \nabla G  ) ds |\leq \d)=1\,.
  \en 
  %Since $H:=\{ \a\in D([0,T]; \cM) \,:\, \sup_{0\leq t <T}| \a_t(G)-\a_0(G)-\int_0^t \a_s (  \nabla \cdot D \nabla G  ) ds |\geq \d\}$ is closed  in $D([0,T]; \cM)$ (cf. \cite[Lemma 8.7]{timo}), from \eqref{razzo1}  we conclude that 
 %$Q(H)=1$.
  As  a consequence, given $G\in\{\varphi_j\}_{\rosso{j\in\bbN}}$, $Q$--a.s. it holds $ \a_t(G)-\a_0(G)-\int_0^t \a_s (  \nabla \cdot D \nabla G  ) ds =0$ for all $0\leq t\leq T$ (adapt the proof of \cite[Lemma 8.7]{timo} to show that  $\{ \a: 
 \sup_{0\leq t\leq T} | 
 \a_t(G)  - \a_0(G)  -\int_0^t   \a_s (
  \nabla \cdot D \nabla G  ) ds |\leq \d \}$ is closed in $ D([0,T],\cM)$).  By the construction of $\{\varphi_j\}_{\rosso{j\in\bbN}}$  in Section \ref{sec_mammina},
    given  a generic $\varphi \in C_c^\infty (\bbR^d)$ with support in some $B_\ell$, we know that for each $\d>0$  
    there exists $G\in\{\varphi_j\}_{\rosso{j\in \bbN}}$ with support in $B_{\ell+1}$ such that $\| G - \varphi\|_\infty \leq \d$ and $\sup_{1\leq i,k\leq d}\| \partial^2_{x_i, x_k} G -\partial^2_{x_i,x_k} \varphi\|_\infty \leq \d$.  Hence both $ \sup_{0 \leq t \leq T}\bigl| \a_t (G) -\a_t(\varphi) \bigr| $  and   $ \sup_{0 \leq t \leq T}\bigl| \a_t ( \nabla \cdot D \nabla G ) -\a_t( \nabla \cdot D \nabla \varphi) \bigr| $ can be bounded by $ C\d  \sup_{0\leq t \leq T} \a_t(B_{\ell+1}) $, where $C=C(D)$. Due to Item (iv) (already proved) and  by density,
    we conclude that $Q$--a.s. it holds $ \a_t(\varphi)-\a_0(\varphi)-\int_0^t \a_s (  \nabla \cdot D \nabla \varphi   ) ds =0$ for all $0\leq t\leq T$ and all $\varphi \in C^\infty_c(\bbR^d)$. Hence  Item (iii) in Lemma \ref{timau} is verified.

   We now check Item (i) in Lemma \ref{timau}. 
By    Remark \ref{cipcip}, given $G\in \{\varphi_j\}_{\rosso{j\in\bbN}}$ 
 we get
\be\label{razzo2}
%\begin{split}
\sup _{\substack{0\leq s\leq t\leq T\\  |t-s| < \b}} \Big|  \int_s^t    \pi^{\e} _{\o,u}(
  \nabla \cdot D \nabla G  ) du \Big|\leq \b \e^{d} \sum _{x\in \hat \o}  |\nabla \cdot D \nabla G  (\e x) | \stackrel{\e \da 0}{\to} C(G)\b\,.
\en
We set $\tilde H:=\{ \a\,:\,  |\a_t(G)-\a_s(G)|\leq 2 C(G) \b \text{ for all } 0\leq s\leq t\leq T \text{ with }|t-s|< \b\}$.
By combining  \eqref{razzo1} and \eqref{razzo2} we get $\lim_{\e \da 0}\bbP^\e _{  \o,\mathfrak{m}_\e }( \pi_{\o,\cdot}^\e   \in \tilde H)=1$. As $\tilde H$ is closed in $D([0,T],\cM)$, we conclude that $Q(\tilde H)=1$. By varying $G$ among $\{\varphi_j\}_{\rosso{j\in \bbN}}$ and by taking $\b \da 0$ along a sequence, we get that 
$Q(C([0,T],\cM) )=1$.

%We move finally to Item (iv) in Lemma \ref{timau}.

%%%%%%%%%%%%%%%%%%%%%%%%%%%%%%%%%%%%%%%%%%%%%%%%%%%%%%%%%%%%%%%%%%%%%%%%%%%%%%%%%%%%%
%%%%%%%%%%%%%%%%%%%%%%%%%%%%%%%%%%%
%%%%%%%%%%%%%%%%%%%%%%%%%%%%%%%%%%%%
%%%%%%%%%%%%%%%%%%%%%%%%%%%%%%%%

\appendix

\section{\rosso{An example of degenerate   nonzero effective homogenized matrix $D$}}\label{app_santi}
In this appendix  we present a model satisfying  Assumptions (A1),..(A9) and (SEP) for which the effective homogenized matrix $D$ is nonzero but degenerate.

All product spaces appearing below are endowed with the product topology.
We take $\O:= (0,2)^{\bbZ^2} \times (0,2)^{\bbZ}$. We denote a generic element of $\O$ as $\o=\left( (u_x)_{x\in\bbZ^2}, (a_s)_{s\in \bbZ}\right)$. The probability measure $\cP$ on $\O$ is such that, under $\cP$,  all coordinates are independent random variables, all $u_x$'s are uniformly distributed on $(1,2)$ and all $a_s$'s are identically distributed with $\bbE[a_s^{-1}]=+\infty$. We take $\hat\o:=\bbZ^2$ and $\bbG=\bbZ^2$. The action of $\bbG$ on $\O$ is the following:
\[
\theta_g \o:= \left( (u_{x-g})_{x\in\bbZ^2}, (a_{s-g_2})_{s\in \bbZ}\right) \text{ if } g=(g_1,g_2)\in\bbZ^2\,,\;\;
\o=\left( (u_x)_{x\in\bbZ^2}, (a_s)_{s\in \bbZ}\right)\,.
\]
$\bbG$ acts on $\bbZ$ by standard translations: $\t_g x:= x+g$, $g\in \bbG$.
The random conductance field is defined as
\[
c_{x,y}(\o):=
\begin{cases}
  u_z & \text{ if } \{x,y\}= \{z, z+e_1\}\,, \;\;\;  z \in \bbZ^2 \,,\\
  a_{z_2} & \text{ if } \{x,y\}= \{z, z+e_2\}\,,\;\;\;  z=(z_1,z_2) \in \bbZ^2\,,\\
  0 & \text{ otherwise}\,.
  \end{cases}
\]
The geometric idea behind the definition of $\o$ and the action $(\theta_g)_{g\in \bbZ^2}$ is that we attach to each point $x=(x_1,x_2)$ the two numbers $u_x$ and $a_{x_2}$ and we think of $u_x$ as the conductance of the edge $\{x,x+e_1\}$ and of $a_{x_2}$  as the conductance of the edge $\{x,x+e_2\}$. All other edges in $\bbZ^2$ have zero conductance. 

\smallskip

We claim that all Assumptions (A1),...,(A9) and (SEP) are satisfied. Indeed,  trivially $\cP$ is stationary. To check  the ergodicity of $\cP$ we introduce the bijection $\Phi: \O\to  \G^{\bbZ}$  where $\G=(0,2)^{\bbZ\cup \{*\}}$ as 
\[
\Phi (\o) = (\g_s) _{s \in \bbZ}\,\qquad \qquad
 (\g_s)_{t}:=
  \begin{cases}
   u_{(t,s)} & \text{ if } t\in \bbZ\,,\\
 a_s &   \text{ if } t=*\,,
  \end{cases}
\]
whenever $\o=\left( (u_x)_{x\in\bbZ^2}, (a_s)_{s\in \bbZ}\right)$. Simply, we organize the elements of $\o$ in rows having in mind the above geometric idea of $\o$.
We write $\cQ$ for the probability measure on   $ \G^{\bbZ}$ such that $\cQ(\cB):= \cP (\Phi^{-1}(\cB))$ for all Borel sets $\cB\subset \G^{\bbZ}$.
Then under $\cQ$ 
  the coordinates  $\g_s$ are  i.i.d. random variables. As a consequence  $\cQ$ is ergodic w.r.t. standard shifts of $  \G^{\bbZ}$. 
Take now a translation invariant measurable set  $\cA\subset \O$.  Then $\theta_{te_2} \cA=\cA$ for all $t\in \bbZ$ and therefore  $\Phi(\cA)$ is left invariant by the standard shifts of $  \G^{\bbZ}$. Due to the ergodicity of $\cQ$ we obtain that  $\cP(\cA)=\cQ(\Phi(\cA))\in \{0,1\}$. This concludes the proof that $\cP$ is ergodic. All other Assumptions (A2),...,(A9) are trivially satisfied. To check  (SEP) it is enough to  argue as in the proof of Proposition \ref{rcm} in case (i).

\smallskip

Let us now prove that $D_{1,1}>0$ and $D_{i,j}=0$ for $(i,j)\not = (1,1)$. 

As in the derivation of  \cite[Prop. 4.1]{Bi} one can lower bound the scalar product  $a\cdot Da$  by $C \sum_{x=e_1,e_2} (a\cdot x)^2 / \bbE[ 1/c_{0,x}(\o)]$ with $C>0$.  Taking $a=e_1$ and using that  $c_{0,e_1}(\o)=u_0 \geq 1$ $\cP$--a.s., we get that $D_{1,1}>0$.

We now show that if $D_{2,2}=0$ then $D_{1,2}=D_{2,1}=0$. Recall that in general $D$ is a symmetric and positive semidefinite matrix. Having nonnegative eigenvalues, the determinant of $D$ is nonnegative, i.e. 
$D_{1,1} D_{2,2}-D_{1,2}D_{2,1}\geq 0$. Since $D_{2,2}=0$ and  $D_{1,2}=D_{2,1}$, we then get that   $ -D_{1,2}^2\geq 0$, thus implying that $D_{1,2}=D_{2,1}=0$. 

It remains to prove that $D_{2,2}=0$. Note that, by \eqref{zazzera} and since $\hat \o=\bbZ^2$, it holds $\cP=\cP_0$ and $\O=\O_0$. Then   \eqref{def_D_R}  implies that 
\begin{equation}\label{attila1000}
 D_{2,2}=\inf _{ f\in L^\infty(\cP) } \frac{1}{2}\int _{\O} d\cP (\o)\sum_{x=\pm e_1, \pm e_2} c_{0,x}(\o) \left
 (x_2- \nabla f (\o, x) 
\right)^2\,,
 \end{equation}
 where $\nabla f (\o, x) := f(\theta_{x} \o) - f(\o)$. 
Let $\cW$ be the family of bounded measurable functions on $\O$ depending only on the coordinates $(a_s)_{s\in \bbZ}$. Then  $\nabla f (\o, \pm e_1) = f(\theta_{e_ 1}\o) - f(\o)=0$. Using also that $c_{x,y}(\o) \leq 2$, due to \eqref{attila1000} we get 
\be\label{nannabella}
\begin{split}
D_{2,2}& \leq 2  \inf _{ f\in \cW } 
\frac{1}{2}\int _{\O} d\cP (\o)\sum_{x= \pm e_2}  \left
 (\pm 1 - \nabla f (\o, x) 
\right)^2\\
&=2  \inf _{ f\in L^\infty( (0,2)^\bbZ)  } 
\frac{1}{2}\int _{(0,2)^\bbZ} d\bar \cP ( \bar a )\sum_{x= \pm 1}  \left
 (x - \bar \nabla f (\bar a , x) 
\right)^2 \,,
\end{split}
 \en
 where $\bar a \in (0,2)^{\bbZ}$, $\bar \cP$ is the probability measure on  $(0,2)^\bbZ$ 
  making the coordinates into  i.i.d. random variables with the same distribution of $a_0$ under $\cP$,  $\bar{\theta} _x $ is the standard shift operator on  $(0,2)^{\bbZ}$ and 
  $\bar \nabla f (\bar a , x) = f(\bar{\theta} _x \bar a) - f (\bar a)$. Then the last expression in \eqref{nannabella} is twice the effective homogenized coefficient for the nearest-neighbor random conductance model on $\bbZ$ with i.i.d. conductances given by $(\bar a_s)_{s\in \bbZ}$. Since 
   $\bar{a}_0^{-1}$ has infinite expectation under $\bar \cP$, we get that this coefficient is zero (see the discussion in Section \ref{nonno} for $d=1$).
%%%%%%%%%%%%%%%%%%%%%%%%%%%%%%%%%%%%%%%%%%%%%%%%%%%%%%%%%%%%%%%%%%%%%%%%%%%%%%%%%%%%%%%%%%%%%%%%%%%%%%%%%%%%%%%%%%%%%%%%%%%%

%%%%%%%%%%%%%%%%%%%%%%%%%%%%%%%
%%%%%%%%%%%%%%%%%%%%%%%%%%%%%%%
%%%%%%%%%%%%%%%%%%%%%%%%%%%%%%%
%%%%%%%%%%%%%%%%%%%%%%%%%%%%%%%

\section{Proof of \eqref{mammaE} and  \eqref{mahmood} for local functions $f$}\label{app_localino}
 Recall the notation of Section \ref{sec_GC}. We take $\o\in \O_*$ and $\xi \in \{0,1\}^{\hat\o}$.
Below   $\cK$ will always  vary in $\bbK_\o$, without further mention.
Given $t\in (0,t_0]$  we denote by 
 $\cG _{t}(\o,\cK)$ 
the undirected graph with vertex set $\hat \o$ and edge set 
  $\{ \{x,y\}\in \cE_\o \,:\, \cK_{x,y}(t)  >0\}$. We recall   that $\cE_\o=\{\{x,y\}\,:\, x,y \in \hat \o,\, x\not =y \}$.
 As  $\cG _{t}(\o,\cK)$ is a subgraph of $\cG _{t_0}(\o,\cK)$,  the graph  $\cG _{t}(\o,\cK)$  has only connected components of finite cardinality. Moreover, as $t\leq t_0$, one can check that  $\eta_t^\xi [\cK]$ can be obtained by the graphical construction detailed in Section \ref{sec_GC}  but working with the graph  $\cG _{t}(\o,\cK)$  instead of $\cG _{t_0}(\o,\cK)$. 
  
  Let $f : \{0,1\}^{\hat \o} \to \bbR$ be a local function.
  Let $A\subset \hat \o$ be a finite set such  that $f(\eta)$ is defined in terms only of $\eta(x)$ with $x\in A$.  We set $\cE_{A}:=\{\{x,y\} \in \cE_\o \,:\, \{x,y\} \cap A \not = \emptyset\}$ (as $\o$ is fixed,  in the notation we do not stress the dependence of $\cE_A$ from $\o$). 
  As   $\o \in \O_*\subset \O_1 $ (see \eqref{alba_chiara} and  Definition \ref{def_omega_*}), we have 
 \be\label{stimetta}c_A(\o) := \sum _{\{x,y\} \in \cE_{A}} c_{x,y}(\o)\leq 
 \sum_{x\in A} \sum _{y \in \hat \o} c_{x,y}(\o) = 
 \sum _{x\in A}c_x(\o) <+\infty\,.
 \en
 Due to the above bound, it is simple to check that 
  the r.h.s.'s of \eqref{mammaE} and \eqref{mahmood} are absolutely convergent series in $C(\{0,1\}^{\hat\o})$ defining the same function, that we denote by $\hat \cL_\o f$. Hence we just need to prove that $\cL_\o f= \hat \cL_\o f$.

 We note that $\cK_A(t):= \sum_{\{x,y\} \in \cE_A} \cK_{x,y}(t)$ is a Poisson random variable (cf. also Item (iii) in Def.~\ref{def_omega_*})  with finite  parameter  $c_A(\o)$.
% 
% \smallskip
% 
% \noindent 
% $\bullet$ {\bf Case $N_A(t) \geq 2$}. By the above consideration, 
In particular,  it holds
  \be\label{duino}
  \bbP_\o ( \cK_A(t) \geq 2) = 1- e^{-c_A(\o) t} (1+ c_A(\o) t) \leq C(\o) t^2\,.
  \en

%\smallskip
 %\noindent 
 %$\bullet$ {\bf Case  $N_A(t)=1$}. 
 
 When  $\cK_A(t)=1$, we define  the pair  $\{x_0,y_0\}$ as the only edge in $\cE_A$ such that $\cK_{x_0,y_0}(t)=1$. To have a univocally defined labelling, we fix a total  order $\prec$ of $\hat \o$. If the pair has only one point in $A$, then we call this point $x_0$ and the other one $y_0$. Otherwise, we call $x_0$ the minimal point inside the pair w.r.t. the order $\prec$.   
\begin{Claim}\label{violaceo}
Let $F $ be the event that (i) $\cK_A(t)=1$ and  (ii) $\{x_0, y_0\}$ is not a  connected component of $\cG_t (\o, \cK)$. Then $\bbP_\o (F) =o(t)$.
%, i.e. $\lim _{t\da 0} t^{-1} \bbP_\o (F)=0$.
\end{Claim}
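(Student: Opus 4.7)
The strategy is to estimate $\bbP_\o(F)$ by decomposing the event according to the firing pair $\{x_0,y_0\}$ and using the independence of the Poisson processes across disjoint sets of edges. The target bound $o(t)$ will come from a standard dominated convergence argument, with the finite rate $c_A(\o)$ supplying a summable envelope.

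First I would reduce to one structural case. On $F$, condition (i) forces the unique firing edge in $\cE_A$ to be $\{x_0,y_0\}$, while condition (ii) requires an additional edge $\{a,b\}\in\cE_\o$ with $\cK_{a,b}(t)\geq 1$, $\{a,b\}\neq\{x_0,y_0\}$, $\{a,b\}\cap\{x_0,y_0\}\neq\emptyset$, and $\{a,b\}\notin\cE_A$ (since any edge of $\cE_A$ firing would violate (i)). If both $x_0,y_0\in A$, then every edge touching $\{x_0,y_0\}$ is automatically in $\cE_A$, so no such $\{a,b\}$ can exist and this sub-case is empty. Hence, by the labelling convention, $x_0\in A$ and $y_0\in\hat\o\setminus A$, and the extra edge must be of the form $\{y_0,z\}$ with $z\in\hat\o\setminus(A\cup\{y_0\})$ (any edge through $x_0$ would lie in $\cE_A$).

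Next I would decompose $F=\bigsqcup_{y\in\hat\o\setminus A}F_y$ with $F_y=F\cap\{y_0=y\}$, and observe that $F_y$ is contained in the intersection of the two events
\[
\text{(a)}\ \sum_{x\in A}\cK_{x,y}(t)\geq 1, \qquad \text{(b)}\ \sum_{z\in\hat\o\setminus(A\cup\{y\})}\cK_{y,z}(t)\geq 1.
\]
The edge families $\{\{x,y\}:x\in A\}$ and $\{\{y,z\}:z\in\hat\o\setminus(A\cup\{y\})\}$ are disjoint, so by the defining independence of the $N_{x,y}(\cdot)$'s, the events (a) and (b) are independent. Setting $c_y^A(\o):=\sum_{x\in A}c_{x,y}(\o)$, a Poisson bound then gives
\[
\bbP_\o(F_y)\leq \bigl(1-e^{-t c_y^A(\o)}\bigr)\bigl(1-e^{-t c_y(\o)}\bigr)\leq t\, c_y^A(\o)\,\bigl(1-e^{-t c_y(\o)}\bigr),
\]
using $\sum_{z\in\hat\o\setminus(A\cup\{y\})}c_{y,z}(\o)\leq c_y(\o)$.

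Summing, $\bbP_\o(F)\leq t\sum_{y\in\hat\o\setminus A}c_y^A(\o)\bigl(1-e^{-t c_y(\o)}\bigr)$. Since $\o\in\O_*\subset\O_1$, $c_y(\o)<+\infty$ for every $y\in\hat\o$, so each summand vanishes as $t\da 0$; moreover each summand is dominated by $c_y^A(\o)$, and by \eqref{stimetta}
\[
\sum_{y\in\hat\o\setminus A}c_y^A(\o)\leq \sum_{x\in A}c_x(\o)<+\infty.
\]
Dominated convergence then yields $\sum_{y}c_y^A(\o)(1-e^{-tc_y(\o)})\to 0$, giving $\bbP_\o(F)=o(t)$. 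The main obstacle is precisely this last step: a crude bound $1-e^{-tc_y(\o)}\leq 1$ only delivers $\bbP_\o(F)=O(t)$, so the improvement to $o(t)$ really requires an envelope summable in $y$ (provided by $c_A(\o)<+\infty$) together with pointwise vanishing (provided by $c_y(\o)<+\infty$ for each $y$).
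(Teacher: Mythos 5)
Your argument is correct and follows essentially the same route as the paper: the same structural reduction showing that on $F$ one must have $x_0\in A$, $y_0\notin A$ and a further fired edge $\{y_0,z\}$ with $z\notin A\cup\{y_0\}$, the same use of independence of the Poisson clocks on disjoint edge sets, and the same dominated convergence argument with envelope $\sum_{x\in A}c_x(\o)<+\infty$ from \eqref{stimetta}. The only cosmetic difference is that you partition over the value of $y_0$ and aggregate the rates into a single Poisson parameter $c_y^A(\o)$, whereas the paper uses a union bound over pairs $(x,y)$; both yield the same conclusion.
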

\begin{proof}[Proof of Claim \ref{violaceo}] We first show that   $F\subset G$, where 
\[
G=\bigl\{ \cK_A(t)=1\,, \; x_0\in A\,,\; y_0\not \in A\,,\;\\
 \exists z \in \hat\o \setminus (A \cup \{y_0\}) \text{ with } \cK_{y_0,z}(t)\geq 1\bigr\}\,.
\]
To prove the above inclusion   suppose first that $\cK_A(t)=1$ and 
   $x_0,y_0\in A$. Then $\{x_0,y_0\}$ must be a connected component in $\cG_t (\o,\cK)$ otherwise we would contradict $\cK_A(t)=1$.  Hence, the event $F$ implies that 
   $x_0\in A$ and $y_0 \not \in A$. By $F$, $\{x_0,y_0\}$  is   not a connected component of $\cG_t (\o,\cK)$, and therefore there   exists
a point $z\in \hat \o\setminus\{x_0,y_0\}$  such that $\cK_{x_0,z}(t) \geq 1$ or $\cK_{y_0,z}(t)\geq 1$. The first case cannot indeed occur as $ \cK_A(t)=1$. By the same reason, in the second case it must be $z\not \in A$.  Hence, we conclude that there exists
$z\in\hat \o \setminus (A\cup\{y_0\})$ such that $\cK_{y_0,z}(t)\geq 1$.
This concludes the proof that $F\subset G$.

We have  
\be\label{mare}
\begin{split}
%& \bbP_\o ( \cK \in \cG_\o\,:\, N_A(t)=1\,, \; x_0\in A\,,\; y_0\not \in A\,,\; \exists z \in \hat\o \setminus (A \cup \{y_0\}) \text{ with } N_{y_0,z}\geq 1) \\
& \bbP_\o(G)  \leq \sum_{x\in A} \sum _{y \in \hat \o \setminus A}  \bbP_\o (  \cK_{x,y}(t)=1\,, \sum_{z\in \hat \o \setminus (A\cup\{y\})} \cK_{y,z}(t) \geq 1) \\
%& = \sum_{x\in A} \sum _{y \not \in A} e^{- c_{x,y}(\o) t} c_{x,y}(\o) t (1 - e^{ -t \sum _{z\in \hat \o \setminus (A\cup\{y\})} c_{y,z} (\o)})\\
& \leq  t \sum_{x\in A} \sum _{y \in \hat \o } c_{x,y}(\o)  e^{- c_{x,y}(\o) t}  (1- e^{-c_y(\o) t})\,.
\end{split}
\en
By \eqref{stimetta} and the  dominated convergence theorem  applied to the last expression in \eqref{mare},  we get $\lim _{t\da 0} \bbP_\o (G)/t=0$. As $F\subset G$, the same holds for $F$.
\end{proof}
%%%%%%%%%%%%%%%%%%%%%%%%%%%%%%%%%%%%%
%%%%%%%%%%%%%%%%%%%%%%%%%%%%%%%%%%%%%
%%%%%%%%%%%%%%%%%%%%%%%%%%%%%%%%%%%%%
%%%%%%%%%%%%%%%%%%%%%%%%%%%%%%%%%%%%%
%%%%%%%%%%%%%%%%%%%%%%%%%%%%%%%%%%%%%

Now let $H $ be the event that (i) $\cK_A(t)=1$ and  (ii) $\{x_0, y_0\}$ is a connected component of $\cG_t (\o, \cK)$.  Moreover, given $\{x,y\}\in \cE_A$, we set $H_{x,y}:= H \cap \bigl\{\{x_0,y_0\}=\{x,y\}\,\bigr\}$.
Due to \eqref{duino} and Claim \ref{violaceo}  we get 
\be \label{eccolo}
\bbP_\o ( \{\cK_A(t)=0\} \cup H ) = 1-o(t) \,.
\en
Hence  we have  $S(t) f (\xi) -f(\xi) =\sum_{ \{x,y\}\in \cE_A}[ f(\xi ^{x,y}) -f (\xi) ]\bbP_\o ( H_{x,y}) + \|f\|_\infty o(t)$.
%\be\label{mentore}\begin{split}
%& S(t) f (\xi) -f(\xi) 
%%= \bbE_\o [ f( \eta^\xi _t [ \cK])]- f(\xi)
%   = \bbE_\o\bigl[\, \bigl( \,  f( \eta^\xi _t [ \cK])- f(\xi)\,
%\bigr) \mathds{1}(  H ) 
%\bigr]+ \|f\|_\infty o(t) \\
%%&=  \sum_{ \{x,y\}\in \cE_A}  \bbE_\o\bigl[\, \bigl( \,  f( \eta^\xi _t [ \cK])- f(\xi)\,
%%\bigr) \mathds{1}( H_{x,y}  )  \bigr]+ \|f\|_\infty o(t) \\
%& =\sum_{ \{x,y\}\in \cE_A}[ f(\xi ^{x,y}) -f (\xi) ]\bbP_\o ( H_{x,y}) + \|f\|_\infty o(t) \,.
%\end{split}
%\en
%As 
%$\sum _{ \{x ,y\}\in \cE_A} \bbP_\o  \bigl( \,(\{\cK_A(t)=1\}\setminus H) \cap \{ \{x_0, y_0\} =\{x,y\}\}\bigr)=\bbP_0 ( F)=o(t)$, 
As $\bbP_\o(F)=o(t)$, 
we can rewrite  the r.h.s. as
 \begin{equation*}
\begin{split}& \sum_{\{ x,y\}\in \cE_A} [ f(\xi ^{x,y}) -f (\xi) ]\bbP_\o ( \{ \cK_A(t)=1\}\cap \{ \{x_0, y_0\} =\{x,y\}\}) + \|f\|_\infty o(t)\\
&=t \sum_{\{ x,y\}\in \cE_A} [ f(\xi ^{x,y}) -f (\xi) ]  c_{x,y} (\o) e^{- c_A(\o) t}+ \|f\|_\infty o(t)
\end{split}
\end{equation*}
%We have
%\be
%\eqref{raggio}/t= \sum_{\{ x,y\}\in \cE_A} [ f(\xi ^{x,y}) -f (\xi) ]c_{x,y}(\o) F_t (\{x,y\}) +o(1)\,,
%\en 
%As the probability in \eqref{raggio} equals $t c_{x,y} (\o) e^{- c_A(\o) t}$,
%We also have  
%\be\label{lumi}
% \bbP_\o ( \{ \cK_A(t)=1\}\cap  \{ \{x_0, y_0\} =\{x,y\}\})= t c_{x,y} (\o) e^{- c_A(\o) t}\,.
%\en
%%where (recall \eqref{stimetta})
%%\[
%%F_t( \{x,y\}):=e^{-c_{x,y}(\o)}\bbP_\o (
%%\sum _{\substack{
%%\{a,b\} \in \cE_A
%% \\   \{a,b\}\not =\{x,y\}}} 
%%\cK_{a,b}(t)  =0)=  e^{- c_A(\o) t}\,.
%%\]
%In particular, by applying \eqref{lumi} to \eqref{raggio} and recalling \eqref{mentore},  we get 
%we finally get 
%\[i
%\frac{S(t) f(\xi) -f(\xi)}{t} - \hat \cL_\o f (\xi ) 
%=\sum_{\{ x,y\}\in \cE_A} [ f(\xi ^{x,y}) -f (\xi) ] c_{x,y} (\o) (e^{- c_A(\o) t} -1 \bigr)+o(1)\,,
%\]
As  $\lim_{t\da 0} o(t)/t = 0 $  uniformly in $\xi$, 
by the dominated convergence theorem  we can conclude that $\hat \cL_\o f = \cL_\o f $.
%%%%%%%%%%%%%%%%%%%%%%%%%%%%%%%%%%%%%%
%\section{Proof of Lemma \ref{ringo}}\label{hop}

%%%%%%%%%

\section{Convergence at a fixed time} \label{sec_passetto}
In this appendix we prove 
\eqref{pasqualino_fisso} 
 for  any $\o\in \O_{\rm typ}$, $t>0 $,  $\d>0$ and $\varphi \in C_c(\bbR^d)$ (the case $t=0$ follows from \eqref{marzolino}). To this aim recall the semigroups $P_t$ and  $P^\e_{\o,t}$ discussed  before Proposition \ref{replay} in Section \ref{figlio_stress}. \verde{Recall also the sets $\O_\sharp$, $\tilde\O$, $\O_*$ and  $\O_{\rm typ}$ (cf.~respectively Proposition~\ref{replay}, Definitions \ref{omesso},  \ref{def_omega_*} and \ref{buddha})}.
One  main tool  to get \eqref{pasqualino_fisso}  will be the following fact, that we will prove at the end:
%%%%%%%%%%%%%%%%%%%%%%%%%%%%%%%%%%%
\begin{Lemma}\label{igro2}
Fix  $\o \in \O_{\rm typ}$,     $\d>0$, $t>0$, $\varphi\in C_c(\bbR^d)$ and  let  $\mathfrak{n}_\e$  be an  $\e$--parametrized family of probability measures on $\{0,1\}^{\hat \o}$.
Then 
 it holds \be\label{vigorsol}
\lim _{\e\da 0} \bbP^\e_{\o, \mathfrak{n}_\e} \Big(\Big| \e^d \sum_{x\in \hat \o} \varphi (\e x) \eta_t (x )- 
\e^d \sum_{x\in \hat \o} \eta_0(x) P_{\o, t}^\e  \varphi (\e x) \Big| >\d \Big)=0\,.
\en
\end{Lemma}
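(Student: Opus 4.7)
The plan is to represent the difference in \eqref{vigorsol} as a martingale increment via a time-dependent Dynkin decomposition, and then control its second moment through the Dirichlet form identity and the ergodic theorem. First I would set $u^\e_s(\e x) := P^\e_{\o,t-s}\varphi(\e x)$ for $s\in[0,t]$, which satisfies the backward equation $\partial_s u^\e_s=-\bbL^\e_\o u^\e_s$ with terminal condition $u^\e_t=\varphi$, and consider the process $\xi_s:=\pi^\e_{\o,s}(u^\e_s)=\e^d\sum_{x\in\hat\o}u^\e_s(\e x)\eta_s(x)$. By construction $\xi_0=\e^d\sum_x\eta_0(x)P^\e_{\o,t}\varphi(\e x)$ and $\xi_t=\pi^\e_{\o,t}(\varphi)$, so the quantity inside the probability in \eqref{vigorsol} is exactly $|\xi_t-\xi_0|$.

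The core step is a cancellation. Applying Dynkin's formula to the time-dependent functional $\xi_s$, the drift takes the form $\e^d\sum_x\eta_s(x)\partial_s u^\e_s(\e x)+\e^{-2}\cL_\o(\pi^\e_\o(u^\e_s))(\eta_s)$. By the duality identity \eqref{airone26} of Lemma \ref{ringo}, the second summand equals $\e^d\sum_x\eta_s(x)\bbL^\e_\o u^\e_s(\e x)$, which cancels the first by the backward equation. Hence $M^\e_s:=\xi_s-\xi_0$ is a zero-mean martingale. For its predictable quadratic variation, the time-dependent analogue of \eqref{compenso_IV} gives
\[
\la M^\e\ra_t=\int_0^t\e^{2d-2}\sum_{x,y\in\hat\o}c_{x,y}(\o)\bigl[u^\e_s(\e x)-u^\e_s(\e y)\bigr]^2\eta_s(x)(1-\eta_s(y))\,ds.
\]
Bounding $\eta_s(x)(1-\eta_s(y))\leq 1$ and using the Dirichlet form identity \eqref{aperitivo3},
\[
\bbE[\la M^\e\ra_t]\leq 2\e^d\int_0^t\la u^\e_s,-\bbL^\e_\o u^\e_s\ra_{L^2(\mu^\e_\o)}\,ds=\e^d\bigl(\|\varphi\|^2_{L^2(\mu^\e_\o)}-\|P^\e_{\o,t}\varphi\|^2_{L^2(\mu^\e_\o)}\bigr),
\]
where the last equality follows by telescoping $\frac{d}{ds}\|u^\e_s\|^2_{L^2(\mu^\e_\o)}=2\la u^\e_s,-\bbL^\e_\o u^\e_s\ra$. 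By Remark \ref{cipcip}, $\|\varphi\|^2_{L^2(\mu^\e_\o)}\to m\int\varphi^2\,dx<\infty$, hence $\bbE[\la M^\e\ra_t]=O(\e^d)\to 0$, and Chebyshev's inequality together with $\bbE[(\xi_t-\xi_0)^2]=\bbE[\la M^\e\ra_t]$ yields \eqref{vigorsol}.

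The main obstacle is justifying that $u^\e_s=P^\e_{\o,t-s}\varphi$ meets the integrability hypotheses required to invoke Lemmas \ref{ringo} and \ref{star}, in particular $\sum_{x\in\hat\o}c_x(\o)|u^\e_s(\e x)|<\infty$ and $\sum_{x\in\hat\o}c_x(\o)u^\e_s(\e x)^2<\infty$. Reversibility of the counting measure together with the compact support of $\varphi$ quickly yields the unweighted bound $\sum_x|u^\e_s(\e x)|<\infty$, but the weighted sums are subtle because $c_x(\o)$ need not be summable against the heat kernel at a single time. I would handle this by approximation: regularize $\varphi$ by $\varphi_\lambda:=R^\e_{\o,\lambda}\psi^{(\lambda)}$ with $\psi^{(\lambda)}:=\lambda\varphi-\nabla\cdot D\nabla\varphi\in C^\infty_c(\bbR^d)$, for which Remark \ref{origano} provides exactly the integrability needed by Lemmas \ref{ringo} and \ref{star}; run the martingale argument above for the approximation; and control the replacement error using the $L^1(\mu^\e_\o)$-convergence \eqref{ratto_gatto} (a consequence of the resolvent homogenization \eqref{flavia}) together with the $L^\infty$- and $L^2$-contractivity of $P^\e_{\o,t-s}$, which ensures uniformity of the error in $s\in[0,t]$. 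This final approximation is where the full strength of $\o\in\O_{\rm typ}$, and in particular the resolvent bound defining $\hat\O$ in Definition \ref{acqua74}, enters.
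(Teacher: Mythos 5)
Your argument is correct, but it takes a genuinely different route from the paper's. The paper first reduces (via the graphical construction and Assumption (SEP)) to initial laws charging only finitely many particles, and then proves a \emph{pathwise} duality representation in the spirit of Nagy, $\eta^\xi_t[\cK](x)=\sum_y p_\o(t,x,y)\xi(y)+\sum_y\int_0^t p_\o(t-s,x,y)\,dM^\xi_y(s)$, with $M^\xi_y$ built from the compensated Poisson clocks; the martingale term is then killed in $L^2$ by orthogonality of the clocks and the same Dirichlet-form telescoping $\int_0^t\la P^\e_{\o,s}\varphi,-\bbL^\e_\o P^\e_{\o,s}\varphi\ra\,ds=\tfrac12(\|\varphi\|^2-\|P^\e_{\o,t}\varphi\|^2)$ that you use. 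Your forward, time-dependent Dynkin martingale for $\xi_s=\pi^\e_{\o,s}(P^\e_{\o,t-s}\varphi)$ reaches the identical second-moment bound $O(\e^d\|\varphi\|^2_{L^2(\mu^\e_\o)})$ while bypassing both the finite-particle truncation and the pathwise representation; the price is exactly where you locate it, namely the integrability hypotheses of Lemmas \ref{ringo} and \ref{star} for the time-dependent test function, and your resolvent regularization does close this: for $\l=1$ and $\tau\in[0,t]$ one has $\sum_x c_x(\o)\,|P^\e_{\o,\tau}R^\e_{\o,1}\psi(\e x)|\le e^{\tau}\|\psi\|_\infty\sum_x\sum_{y:\e y\in B_r}c_x(\o)\int_0^\infty e^{-u}p^\e_{\o,u}(\e x,\e y)\,du$, which is finite and uniform in $\tau$ precisely by \eqref{rinato}, and the replacement error is $o(1)$ by \eqref{ratto_gatto} together with the $L^1(\mu^\e_\o)$-contractivity of the semigroup. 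Two small points you should add to make this airtight: (a) the lemma assumes only $\varphi\in C_c(\bbR^d)$, so before forming $\psi^{(\l)}=\l\varphi-\nabla\cdot D\nabla\varphi$ you must first replace $\varphi$ by a $C_c^\infty$ approximant (harmless, since both terms in \eqref{vigorsol} change by at most $\|\varphi-\tilde\varphi\|_\infty\,\mu^\e_\o(K)$ for a common compact $K$, controlled by Remark \ref{cipcip}); (b) Lemmas \ref{ringo} and \ref{star} are stated for time-independent $u$, so the time-dependent Dynkin decomposition and bracket formula need the routine extension (as in \cite[Lemma~5.1, App.~1]{KL}), which the uniform-in-$s$ bounds above justify. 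With those additions your proof stands as a self-contained alternative to Steps 1--2 of the paper's Appendix \ref{sec_passetto}, at the cost of leaning more heavily on Definition \ref{acqua74} and the resolvent homogenization \eqref{flavia}, which the paper's fixed-time argument does not need.
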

%%%%%%%%%%%%%%%%%%%%%%%%%%%%%%%%%%%
\begin{Remark}
The second sum in  \eqref{vigorsol}  can be an infinite series. It is anyway absolutely convergent as it  can be bounded by $\sum_{x\in \hat \o}  P_{\o, t}^\e  \verde{|\varphi|} (\e x)= \sum_{x\in \hat \o}  |\varphi (\e x)|$ (by using the symmetry of the rates for $\o \in \O_{\rm typ} \subset \O_* \subset \tilde \O$).
%
%
%
% We claim that, for $\o \in \O_{\rm typ}$,    this series is  absolutely convergent (at least, for $\e$ small).
%To prove this claim we observe that,  as $\O_{\rm typ}\subset \O_\sharp$  and  by \eqref{marvel2}, it is enough to show that $\int | P_t \varphi (x) | d \mu^\e_\o(x)<+\infty$ for $\e $ small. This bound follows from the fact that $\o\in \O_{\rm typ}$ (cf. \eqref{claudio2}) and the fast decay of $P_t\varphi$.
\end{Remark}
%%%%%%%%%%%%%%%%%%%%%%%%%%%%%%%%%%%
Let us first prove \eqref{pasqualino_fisso}. As   $ \int  \varphi(x) \rho(x,t) dx= \int\rho_0(x) P_t \varphi(x) dx $, we can bound
\be
\begin{split}
& \Big | 
 \e^d \sum_{x \in \hat \o} \varphi (\e x) \eta_t(x) - \int _{\bbR^d} \varphi(x) \rho(x,t) dx 
 \Big |\\
   \leq & \Big| \e^d \sum_{x\in \hat \o} \varphi (\e x) \eta_t(x) - 
\e^d \sum_{x\in \hat \o} \eta_0(x) P_{\o, t}^\e  \varphi (\e x) \Big|\\
+ & \Big| 
\e^d \sum_{x\in \hat \o} \eta_0(x) P_{\o, t}^\e  \varphi (\e x)-\e^d \sum_{x\in \hat \o} \eta_0(x) P_t \varphi (\e x) \Big|\\
 + & \Big|  \e^d \sum_{x\in \hat \o} \eta_0(x) P_t  \varphi (\e x) - \int _{\bbR^d} \rho_0(x) P_t \varphi(x)  dx\Big|=:I_1+I_2+I_3\,.
\end{split}
\en
Trivially we can bound
 $I_2 \leq 
\e^d \sum_{x\in \hat \o} | P_{\o, t}^\e  \varphi (\e x)-P_t \varphi (\e x) |
$.  The r.h.s.   goes to zero as $\e \da 0$ as $\O_{\rm typ}\subset \O_\sharp$ (cf.~\eqref{marvel2}). By combining this limit   with  Lemma \ref{igro2},  to prove \eqref{pasqualino_fisso} we only need to show that
$\lim_{\e\da 0} \mathfrak{m}_\e (I_3>\d) =0$.
% \be\label{natalino00}
%\lim_{\e\da 0} \mathfrak{m}_\e \Big(\Big|  \e^d \sum_{x\in \hat \o} \eta_0(x) P_t  \varphi (\e x) - \int _{\bbR^d} \rho_0(x) P_t \varphi(x)  dx\Big| >\d 
%\Big)=0\,.
%\en 
The  
continuous  function $P_t\varphi$ decays fast to infinity. In particular,  for some $C>0$ we have  
$| P_t\varphi|(z) \leq C \psi(|z|)$ for all $z\in \bbR^d$, where $\psi(r):=1/(1+ r^{d+1})$.   Due to \eqref{claudio2} \rosso{and} since $\o\in \O_{\rm typ}$, we can fix $\ell$ such that $\int_{\bbR^d}  \psi(|z|)  \mathds{1}_{\{ |z| \geq \ell\}}dz < \d/(5C)$ and 
$\varlimsup_{\e \da 0} \int_{\bbR^d} d \mu^\e _\o (z)\psi(|z|)  \mathds{1}_{\{ |z| \geq \ell\}}\leq \d/(5C)$. This implies for $\e$ small that
\be\label{ninnolo}
   \e^d \sum_{x\in \hat \o } | P_t  \varphi|  (\e x)\mathds{1}_{\{ |\e x | \geq \ell\}}\leq \d/5\,, \qquad
   \int _{\bbR^d}    |P_t\varphi|(z)  \mathds{1}_{\{ |z| \geq \ell\}}  dz \leq \d/5\,. 
\en
Then 
we  fix a function $\tilde \varphi \in C_c (\bbR^d)$ such that $|\tilde  \varphi|  \leq | P_t\varphi|$ and $\tilde \varphi(x)= P_t\varphi(x)$ if $|x| \leq \ell$.
Hence, due to \eqref{ninnolo},  to prove 
that $\lim_{\e\da 0} \mathfrak{m}_\e (I_3>\d) =0$ it is enough to show that 
\be\label{natalino95}
\lim_{\e\da 0} \mathfrak{m}_\e \Big(\Big|  \e^d \sum_{x\in \hat \o} \eta_0(x) \tilde   \varphi (\e x) - \int _{\bbR^d} \rho_0(x)\tilde  \varphi(x)  dx\Big| >\d /5
\Big)=0\,.
\en
The above limit follows from our assumption on $\mathfrak{m}_\e$ (cf.~\eqref{marzolino} in Theorem \ref{teo1}). This concludes the derivation of \eqref{pasqualino_fisso} assuming Lemma \ref{igro2}.

\medskip

We now give the proof of Lemma \ref{igro2}:
\begin{proof}[Proof of Lemma \ref{igro2}]% Without loss of generality we restrict to positive $\varphi$. 
It is convenient here to work with the non speeded-up exclusion process with formal generator $\cL_\o$ (cf.~\eqref{mammaE} and \eqref{mahmood}).  
We write $\bbP_{\o, \mathfrak{n}_\e}$ for its law on the path space $D(\bbR_+, \{0,1\}^{\hat \o})$, when starting with distribution $\mathfrak{n}_\e$. Then we   can restate \eqref{vigorsol} as 
\be\label{vigorsol_bis}
\lim_{\e\da 0} \bbP_{\o, \mathfrak{n}_\e} \Big(\Big| \e^d \sum_{x\in \hat \o} \varphi (\e x) \eta_{\e^{-2} t} (x )- 
\e^d \sum_{x\in \hat \o} \eta_0(x) P_{\o, t}^\e  \varphi (\e x) \Big| >\d \Big)=0\,.
\en
We divide the proof  of \eqref{vigorsol_bis} in some main steps.

\medskip

$\bullet$ \emph{Step 1: Reduction to   distributions  $\bar{\mathfrak{n}}_\e$ concentrating on configurations  having a finite number of particles}. We think of  the exclusion process  as built according to the graphical construction described in Section \ref{sec_GC}, after sampling $\eta_0$ with distribution $\mathfrak{n}_\e$.
As $\o \in \O_{\rm typ}\subset \O_*$ we have  $\bbP_\o(\bbK_\o)=1$.  
Given  $x\in \hat \o$, $r \in \bbN$ and $\cK\in \bbK_\o$, we denote by  $\cC_r(x)$ the connected component of $x$ in the graph $\cG_{t_0} ^r (\o, \cK)$. Fix $s\in (r t_0, (r+1) t_0]$.  Due to  the  graphical construction, if we know $\cK$, then to determine $ \eta^\xi_s[\cK](x)$  we only need to know $\eta _{r t_0}^\xi [ \cK](z)$ with $z \in \cC_r(x)$ (and this holds for any $\xi \in \{0,1\}^{\hat \o}$). By iterating the above argument 
we conclude that, knowing  $ \cK$, the value of $ \eta^\xi_s[\cK](x)$ is determined by $\xi(z)$ as $z$ varies in the finite set 
\[ Q_r(x):=\cup_{z_r \in \cC_r (x) } \cup _{z_{r-1} \in \cC_{r-1}(z_r) } \cdots  \cup _{ z_1\in \cC_1(z_2)}   \cC_0(z_1)\,.\]
The above set $Q_r(x)$ is finite as $\cK \in \bbK_\o$.
As $\varphi$ has compact support, we can take $\ell>0$ such that  $\varphi $ has support in the ball  $B_\ell$ of radius $\ell$ centered at the origin.
Then, by the above considerations, given $t>0$    for  $\ell_*=\ell_*(\o, \e,t)$ large enough   we have $\bbP_\o ( A^c_{\o,\e,t}) \leq \e$, where 
\[   A_{\o,\e,t}:=\bigl\{\cK\in \bbK_\o\,:\,   \cup _{\substack{x \in  \hat \o: \\  \e |x| \leq \ell }} Q_{r(\e^{-2}t)}(x)  \subset B_{\ell_*} \bigr\}
\]
and  $r(\e^{-2}t)$ is the unique integer $r\geq 0$ such that $\e^{-2}t\in (r t_0, (r+1) t_0]$.
Note that, when the event $A_{\o,\e,t}$ takes place, the value  $ \e^d \sum_{x\in \hat \o} \varphi (\e x) \eta^\xi _ { \e^{-2} t}[\cK](x)$ depends on $\xi$ only through $\xi(z)$ with $ z \in \hat \o \cap B_{\ell_*}$.

As $\o\in \O_{\rm typ}$  (see \eqref{claudio2}) and since 
 $P_t \varphi$ decays fast to infinity,  we have 
 $ \varlimsup_{L \uparrow \infty}\,\varlimsup_{ \e \da 0}  \int d \mu^\e_\o(z) |P _t   \varphi   |(z)\mathds{1}
 _{\{| z | > L\}}=0$.
 In particular, we can fix $L_*=L_*(\varphi, \o)$, such that 
 $\varlimsup_{ \e \da 0}  \int d \mu^\e_\o(z) |P _t   \varphi   |(z)\mathds{1}
 _{\{| z | > L_*\}}\leq \d/4$. On the other hand, 
 as $\o \in \O_{\rm typ}\subset \O_\sharp$ (cf.~\eqref{marvel2} in Proposition \ref{replay}) for $\e$ small enough we have  $\int d \mu^\e_\o(z) |P_{\o, t}^\e  \varphi (z)-P_t \varphi(z) |\leq \d/4$. Due to the above observations,   for $\e$ small  it holds 
$
  \e^d \sum_{x\in \hat \o: |x | \geq L_* /\e} \xi (x)| P_{\o, t}^\e  \varphi| (\e x) \leq \d/2$ for all $ \xi \in \{0,1\}^{\hat \o}$.

Call $\overline{\mathfrak{n}}_\e$   the law of the following random configuration in $\{0,1\}^{\hat \o}$: sample $\xi$ with law $\mathfrak{n}_\e$, then set the particle number of $\xi$ equal to zero at any site $x \in \hat \o$ with \verde{$|x| > \ell_*\lor (L_*/\e)$}. By the above considerations, to get \eqref{vigorsol_bis} it is enough to prove the same limit with $\mathfrak{n}_\e$ replaced by $\overline{\mathfrak{n}}_\e$ and with $\d$ replaced by $\d/2$. The fact that the constant \verde{$\ell_*\lor (L_*/\e)$} depends on $\e, \o, \varphi,t$ does not interfere with the arguments presented below (moreover, $\o$, $\varphi$, $t$ can be thought as fixed).
%
% In order not to lighten the notation, we do not rep
%
%This implies  that, in order to prove \eqref{vigorsol_bis}, we can restrict  (as we do from now on) to probability measures $\mathfrak{n}_\e$ such that  $\mathfrak{n}_\e \bigl(  \eta(x) =0 \; \forall x \in \hat \o \setminus B(\ell_\e)\bigr)=1$ for a suitable constant $\ell_\e>0$. 

\medskip

$\bullet$ \emph{Step 2: special pathwise  representation of $\eta^\xi_t[\cK](x) $}.
We fix  $\xi\in   \{0,1\}^{\hat \o}$ with a finite number of particles.  On the probability space $(\bbK_\o, \bbP_\o)$ (cf. Definition \ref{def_omega_*}) we  introduce the martingales $(M^\xi_y(t))_{t\geq 0}$, with $y$ varying among $\hat \o$, by setting $M^\xi_y(0):=0$ and  
\be\label{deprimo}
dM^\xi_{y} (t) :=\sum_{z \in \hat\o}  \bigl(\eta^\xi_{t-}[\cK](z)- \eta_{t-}^\xi[\cK](y)\bigr) dA_{y,z}(t)  \,, \;\;A_{y,z}(t):= \cK_{y,z}(t) - c_{y,z}(\o) t\,.
\en
 The key observation now, going back to \cite{N} and proved below, is  that the symmetry of the jump rates implies the following pathwise representation  
for all $x \in \hat \o$ and $\cK\in \bbK_\o$:
\be\label{razzo}
\eta^\xi_t[\cK](x)= \sum_{y\in \hat \o} p_\o(t,x,y )\xi(y) + \sum _{y \in \hat \o} \int _0^t p_\o(t-s,x,y) dM^\xi_{y}  (s)\,.
\en
Above  $p_\o(t,x,y)$ is the probability to be at $y$ for the random walk $X^\o _\cdot$  starting at $x$ (before we used the notation $p^1_{\o,t}(x,y)$, which would  not be very readable in the rest). We first show that the r.h.s. of \eqref{razzo} is well posed and afterwards we check \eqref{razzo} itself.

\smallskip

$\odot$ \emph{Step 2.a:  the r.h.s. of \eqref{razzo} is well posed}. 
As  $\o \in \O_{\rm typ}$  it holds $ c_y(\o):= \sum _{z\in \hat\o}c_{y,z}(\o) <+\infty$ for all $y \in\hat \o$. 
%Note moreover that $\cK_y(t):= \sum_{z\in \hat \o}\cK_{y,z}(t) $, $t\geq 0$,  is a Poisson process with intensity  $ c_y(\o)$.   
As $\cK\in \bbK_\o$, by Definition \ref{def_omega_*} we also  have $\cK_y(t)<+\infty$ for all $y\in \hat \o$ and $t\geq 0$.

 As $\xi$ has a finite number of particles, the first sum  in the r.h.s. of \eqref{razzo} is trivially  finite. We now show that  the second sum in the r.h.s. is absolutely convergent, thus implying that the r.h.s. of \eqref{razzo} is well posed. To this aim call $D= D(\cK, \xi)$ the  set of points  $y\in \hat \o$ such that $\eta^\xi_s[\cK](y)=1$ for some $s\in [0,t]$. By the graphical construction and since $\xi$ has a finite number of particles,  $D$ is a finite set. 
We also note that,  if $ | \eta^\xi_{s_-}[\cK](z) - \eta_{s-}[\cK](y)| $  is nonzero, then $y$ or $z$ must belong to $D$.
 Hence
 %,  using  that $c_x(\o)<+\infty$ and $\cK_x(t)<+\infty$ $\forall x\in \hat \o$ and $\forall t\geq 0$,
   we can bound
 \be\label{inno}
\begin{split} & \sum _{y \in \hat \o} \sum _{z \in \hat \o}\int_0^t  p_\o(t-s ,x,y)  \big |\eta_{s-}^\xi[\cK](z)- \eta_{s-}^\xi[\cK](y)\big|c_{y,z}(\o) ds \\&  \leq 
t \sum _{y\in D}\sum_{z\in \hat \o}    c_{y,z}(\o)+t \sum _{y \in \hat \o}\sum_{z\in D}   c_{y,z}(\o) = 2t \sum_{y\in D}c_y(\o)<+\infty
\end{split}
\en
and (using also that  $\cK_{y,z}(s)=\cK_{z,y}(s)$)
% Since $N_{y,z}(s)=N_{z,y}(s)$,  we can bound
% bounded by  $\mathds{1}_{y\in D} d N_{y,z} (s)+ \mathds{1}_{z\in D} d N_{z,y} (s)$. Hence
\be\label{innone}
\begin{split}
&  \sum _{y \in \hat \o} \sum_{z\in \hat \o}  \int _0^t p_\o(t-s,x,y)   \big |\eta_{s-}^\xi[\cK](z)- \eta_{s-}^\xi[\cK](y)\big| d\cK_{y,z}(s)\\
&  \leq  \sum _{y \in \hat \o} \sum_{z\in \hat \o} \int _0^t  \mathds{1} (y \in D \text{ or } z\in D) d\cK_{y,z}(s)
\leq 2 \sum _{v\in D} \cK_v(t) <+\infty\,.
\end{split}
\en
As a byproduct of \eqref{inno} and \eqref{innone} the second series in the r.h.s.\ of \eqref{razzo} is absolutely convergent.
\smallskip

$\odot$ \emph{Step 2.b: proof of \eqref{razzo}}. 
We now verify \eqref{razzo} (the proof is different from the one in \cite{N}, which does not adapt well to our setting). To this aim we fix $\cK\in \bbK_\o$. 
%To simplify the formulas below, for this Step 2.b we write $\eta^\xi_s$ instead of $\eta^\xi_s[\cK]$.
 Recall the finite set $D$ introduced in Step 2.a. Let $t_1<t_2< \cdots<t_n$  be  the jump times of the  Poisson processes $\cK_v(\cdot) $ up to time $t$, as $v $ varies among $D$.
  Let $a_i,b_i\in \hat \o$ be  such  that $\cK_{a_i,b_i}(t_i)= \cK_{a_i,b_i}(t_i-)+1$ (the pair $\{a_i,b_i\}$ is univocally determined, the way  we label its elements will be irrelevant).
  We set $t_0:=0$, $t_{n+1}:=t$.
As (see Step 2.a)   the series in the r.h.s. of \eqref{razzo} are absolutely convergent, we have 
%\be\label{pollicino}
%\sum _{y \in \hat \o} \int _0^t p_\o(t-s,x,y) dM^\xi_{y}  (s)=A_1-A_2\,,
%\en
%where %$A_1:=\sum_{ i=1}^n  C_i$, 
\begin{align}
& \sum _{y \in \hat \o} \int _0^t p_\o(t-s,x,y) dM^\xi_{y}  (s)=A_1-A_2\,, \label{pollicino}\\
& A_1:=\sum_{i=0}^n \sum _{y \in \hat \o}  \sum_{z\in \hat\o}  \bigl( \eta^\xi_{t_i}[\cK](z)- \eta^\xi_{t_i}[\cK](y) \bigr)  \int _{(t_i,t_{i+1}]}   p_\o(t-s,x,y)d\cK_{y,z}(s)\,,\nonumber
%& C_i:=   \bigl[ p_\o(t-t_i,x,y_i )- p_\o(t-t_i,x,z_i )\bigr]\bigl(\eta _{t_{i-1}}^\xi[\cK](z_i) -\eta^\xi_{t_{i-1}}[\cK](y_i) \bigr)\,,
\\
& A_2:= \sum_{i=0}^n \sum _{y \in \hat \o}  \sum_{z\in \hat\o} c_{y,z}(\o) \bigl( \eta^\xi_{t_i}[\cK](z)- \eta^\xi_{t_i}[\cK](y) \bigr) \int _{t_i}^{t_{i+1}}   p_\o(t-s,x,y) ds  \,.\nonumber
  \end{align}
  Consider  the expression $  \bigl( \eta^\xi_{t_i}[\cK](z)- \eta^\xi_{t_i}[\cK](y) \bigr) d\cK_{y,z}(s)$. If it is nonzero, then $\{y,z\}$ intersects $D$ and $s$ is a jump time of $\cK_{y,z}(\cdot)=\cK_{z,y}(\cdot)$. In particular, it must be  $s\in \{t_1,t_2,\dots, t_n\}$ and $\{y,z\}=\{a_i,b_i\}$ \verde{if $s=t_i$}.
  The above considerations imply that   $A_1=\sum_{i=0}^{n-1} C_i$, where  
%  \be
%  \begin{split}
%  A_1& = \sum_{i=0}^n  \bigl( \eta^\xi_{t_i}[\cK](a_{i+1})- \eta^\xi_{t_i}[\cK](b_{i+1}) \bigr)    p_\o(t-t_{i+1} ,x,a_{i+1})\\
%  &+\sum_{i=0}^n  \bigl( \eta^\xi_{t_i}[\cK](b_{i+1})- \eta^\xi_{t_i}[\cK](a_{i+1}) \bigr)    p_\o(t-t_{i+1} ,x,b_{i+1})
% \end{split}
%  \en
   \[
   C_i := \bigl( \eta^\xi_{t_i}[\cK](b_{i+1})- \eta^\xi_{t_i}[\cK](a_{i+1}) \bigr)   \bigl[  p_\o(t-t_{i+1} ,x,a_{i+1})- p_\o(t-t_{i+1} ,x,b_{i+1})\bigr]\,.
   \]

We write $E_x$ for the expectation w.r.t. the random walk $(X^\o_t)_{t\geq 0}$ on $\hat \o$ starting at $x$. Fixed $i\in \{1,\dots, n\}$, we consider the function  $f_i: \hat \o\to \bbR$  given by $f_i(a):= \eta^\xi_{t_i}[\cK](a)$. Note that $f_i$ has   finite support.
 Since $\verde{\tilde{\bbL}_\o^1} f_i (y)=  \sum_{z\in \hat\o} c_{y,z}(\o)  \bigl( \eta^\xi_{t_i}[\cK](z)- \eta^\xi_{t_i}[\cK](y) \bigr)$ (cf. Definition~\ref{birillino}), we have 
   \begin{equation*}
   \begin{split} A_2& =\sum_{i=0}^n \sum_{y\in \hat \o} \verde{\tilde{\bbL}^1_\o} f_i (y) \int _{t_i}^{t_{i+1}}   p_\o(t-s,x,y) ds\\
   &= \sum_{i=0}^n \sum_{y\in \hat \o} \int_{t- t_{i+1}} ^{t-t_i}   p_\o(s,x,y) \verde{\tilde{\bbL}^1_\o } f_i (y) ds
   = 
     \sum_{i=0}^n \int_{t- t_{i+1}}^{t-t_i} \frac{d}{ds} E_{x} [\eta^\xi_{t_i}[\cK](X^\o_s)]ds\\
   & =
    \sum_{i=0}^n  \left ( E_{x} [\eta^\xi_{t_i}[\cK]( X^\o_{t-t_{i} })]-E_{x} [\eta_{t_i}^\xi[\cK]( X^\o_{t-t_{i+1} })]  \right) \,.
   % &  \,.
   \end{split}
   \end{equation*}
Note that    the third identity can be derived from Proposition \ref{prop_SEP} (recall that $\O_{\rm typ}\subset \O_*$) as the random walk can be thought of  as a simple exclusion process with just one particle (having \eqref{mahmood} on local functions of $\eta$, it is enough to compute $\cL_\o F$ with $F(\eta):= \sum_{a\in \D_i} f(a) \eta(a)$, $\D_i$ being the finite support of $f_i$, and evaluate $\cL_\o F$ on configurations with just one particle). 
Since, for $ 1\leq i \leq n$ and $u\in \hat \o$,  it  holds \begin{equation*}
\begin{split}
\eta^\xi_{ t_i }[\cK](u) = \eta^\xi_{t_{i-1}}[\cK](u)&  + 
( \d_{u,a_i}-\d_{u,b_i})  \big [\eta^\xi_{t_{i-1}}[\cK](b_i) - \eta^\xi_{t_{i-1}}[\cK](a_i) \big ]\,,
% \\ &-\d_{u,b_i} \big [\eta^\xi _{t_{i-1}}[\cK](b_i) - \eta^\xi_{t_{i-1}}[\cK](a_i) \big ]\,,
 \end{split}
 \end{equation*} we have
$E_{x} [\eta^\xi_{t_i}[\cK](X^\o_{t-t_{i} })]= E_{x}  [\eta^\xi_{t_{i-1}}[\cK](X^\o_{t-t_{i} })]+C_{i-1}$.
Hence we can write 
\begin{equation*}
\begin{split}A_2& = E_{x}  [\eta^\xi_0[\cK](X^\o_{t})]+\sum_{i=1}^n (E_{x}  [\eta^\xi_{t_{i-1}}[\cK]
(X^\o_{t-t_{i} })]+C_{i-1}) -\sum_{i=0}^n  E_{x}  [\eta^\xi_{t_i}[\cK] (X^\o_{t-t_{i+1} })] \\
& = E_{x}  [\xi(X^\o_{t} )]-E_{x}^\o [\eta^\xi_{t_n}[\cK] (X^\o_{0} )]+A_1= \sum_{y\in \hat \o} p_\o(t,x,y )\xi(y)-\eta^\xi_{t}[\cK](x)+A_1.
 \end{split}
 \end{equation*}The above identity and \eqref{pollicino} imply \eqref{razzo}. 
 %To justify    the  above manipulations of series,  we point out that $\eta_{t_i}^\xi[\cK] $  has   only a finite number of nonzero entries and that $c_y ( \o)<+\infty$ for all $ y\in \hat \o$.

\smallskip

$\bullet$ \emph{Step 3: Conclusion}. Recall that, due to Step 1, to prove
\eqref{vigorsol_bis} it is enough to prove the same limit with $\mathfrak{n}_\e$ replaced by $\overline{\mathfrak{n}}_\e$ and with $\d$ replaced by $\d/2$.
We denote by $ \bbE_\o  $ the expectation w.r.t. $ \bbP_{\o} $.  
 By  the symmetry $p_\o(t,x,y)= p_\o(t,y,x)$, we have
 \[
\e^d \sum_{x\in \hat \o} \varphi (\e x)  \sum_{y\in \hat \o} p_\o(t,x,y )\xi(y)=\e^d \sum_{x\in \hat \o} \xi(x) P_{\o, t}^\e  \varphi (\e x)\,, \qquad \forall \xi \in \{0,1\}^{\hat \o}\,.
\]
Hence,
 due to \eqref{razzo},  in order to conclude the proof of \eqref{vigorsol_bis}  it is enough to show that 
\be\label{girellino}%\label{vigorsolA}
\lim _{\e\da 0} \int d \bar{\mathfrak{n}}_\e (\xi)   \bbE_{\o} \Big[ \Big(  \e^d \sum_{x\in \hat \o} \varphi (\e x)  \sum _{y \in \hat \o} \int _0^{ \e^{-2} t }  p_\o(\e^{-2} t-s,x,y) dM^\xi_{y}  (s)
\Big)^2\Big]=0\,.
\en
Due to \eqref{deprimo}, we can rewrite the expression inside  the  $(\cdot)$--brackets as 
\begin{equation*}
\begin{split}
&\cR_\e^\xi[\cK]: =   \frac{\e^d}{2} \sum_{x\in \hat \o}  \sum _{y \in \hat \o}  \sum _{z \in \hat \o} 
 \varphi (\e x)  \cdot \\
 &\;\;\;\; \int _0^{\e^{-2} t} [ \eta_{s-}^\xi (z) - \eta_{s-}^\xi (y)] \bigl( p_\o(\e^{-2} t-s,x,y) - p_\o(\e^{-2} t-s, x,z)\bigr) d A_{y,z} (s) \\
 &=  \frac{\e^d}{2}  \sum _{y \in \hat \o} \sum _{z \in \hat \o}  \int _0^{\e^{-2} t} [ \eta_{s-}^\xi (z) - \eta_{s-}^\xi (y)] 
 \bigl( P^1_{\o, \frac{t}{\e^2}-s  } \varphi(\e y)- P^1_{\o,\frac{t}{ \e^{2} }-s} \varphi(\e z)\bigr) d A_{y,z} (s)\,.
 \end{split}
\end{equation*}
where $\eta_\cdot ^\xi=\eta_\cdot ^\xi[\cK]$.
As the $A_{y,z}(\cdot)$'s are orthogonal martingales by varying $\{y,z\}$ (while $A_{y,z}(\cdot)=A_{z,y}(\cdot)$), 
 similarly to \cite{N} we get  (using the symmetry of $p_\o (s, \cdot, \cdot)$)
\begin{equation*}
\begin{split}
&\int d \bar{\mathfrak{n}}_\e (\xi) \bbE_{\o} \big[(\cR_\e^\xi)^2\big]\leq \frac{\e^{2d}}{\verde{2}}\sum _{y \in \hat \o  }  \sum _{z \in \hat \o}\int _0^{\e^{-2} t} c_{y,z}(\o)  \bigl( P^1_{\o, s} \varphi(\e y)- P^1_{\o, s} \varphi(\e z)\bigr)^2ds\\
&=\verde{\e^d} \int _0^{ t} \la P^\e _{\o, s} \varphi, - \bbL_\o ^\e P^\e _{\o, s} \varphi \ra _{L^2 (\mu_\o ^\e)}= -\frac{\e^d}{\verde{2}}  \int _0^{ t} \frac{d}{ds} \| P^\e _{\o, s} \varphi\|^2_{L^2(\mu_\o^\e)}ds\\
& =  \frac{\e^d}{\verde{2}} \| P^\e _{\o, 0} \varphi\|^2_{L^2(\mu_\o^\e)}-\frac{ \e^d}{\verde{2}} \| P^\e _{\o, t} \varphi\|^2_{L^2(\mu_\o^\e)}  \leq \frac{ \e^d}{\verde{2}} \| \varphi \|^2_{L^2(\mu_\o^\e)}\stackrel{\e \to 0}{\longrightarrow}0\,.
\end{split}
\end{equation*}
This concludes the proof of \eqref{girellino}.
\end{proof}
%%%%%%%%%%%
%\begin{Remark} We point out that the approach followed in this Appendix does not rely on any uniqueness result for weak solutions of the hydrodynamic equation, in particular  on Lemma \ref{timau}.
%\end{Remark}

%%%%%%%%%%%%%%%%%%%%%%%%%%%%%%%%%%%%%
\noindent {\bf Acknowledgements}:  I thank Davide Gabrielli and Andriano Pisante for useful discussions. \rosso{I thank the anonymous referees for their careful reading and stimulating comments.}

\noindent {\bf Data availability statement}: Data sharing not applicable to this article as no datasets were generated or analysed during the current study.

\end{document}